\newtheorem{theorem}{\bf Theorem}[section]
\newtheorem{definition}{\bf Theorem}[section]
\newtheorem{remark}{\bf Theorem}[section]
\newtheorem{corollary}{\bf Corollary}[section]
\newtheorem{lemma}{\bf Lemma}[section]
\def\be{\begin{equation}}
	\def\ee{\end{equation}}
\def\bse{\begin{subequations}}
	\def\ese{\end{subequations}}
\def\bge{\begin{eqnarray}}
	\def\bgee{\begin{eqnarray*}}
		\def\ege{\end{eqnarray}}
	\def\egee{\end{eqnarray*}}
\begin{document}

\title{Wellposedness of an Elliptic-Dispersive Coupled System for MEMS}

\author{
Heiko Gimperlein\thanks{Leopold-Franzens-Universit\"{a}t Innsbruck, Engineering Mathematics, Technikerstra\ss e 13, 6020 Innsbruck, Austria} \and  Runan He\thanks{Institut f\"{u}r Mathematik, Martin-Luther-Universit\"{a}t Halle-Wittenberg, 06120 Halle (Saale), Germany} \and  Andrew A.~Lacey\thanks{Maxwell Institute for Mathematical Sciences and Department of Mathematics, Heriot-Watt University, Edinburgh, EH14 4AS, United Kingdom}}
\date{}

\maketitle \vskip 0.5cm
\begin{abstract}
\noindent  In this work, we study the local wellposedness of the solution to a nonlinear elliptic-dispersive coupled system which serves as a model for a Micro-Electro-Mechanical System (MEMS). A simple electrostatically actuated MEMS capacitor device consists of two parallel plates separated by a gas-filled thin gap.  The nonlinear elliptic-dispersive coupled system modelling the device combines a linear elliptic equation for the gas pressure with a semilinear dispersive equation for the gap width. We show the local-in-time existence of strict solutions for the system, by combining  elliptic regularity results for the elliptic equation, Lipschitz continuous dependence of its solution on that of the dispersive equation, and then local-in-time existence for a resulting abstract dispersive problem.  Semigroup approaches are key to solve the abstract dispersive problem.
\end{abstract}

\section{Introduction} \label{Chap5Sec:Intro}
This paper is concerned with short-time existence, uniqueness and smoothness of the solution to the following nonlinear coupled system, which is a model for an idealized electrostatically actuated MEMS device containing an effectively incompressible fluid:
\bse\label{Chap5cp2}
\be\label{Chap5cp2-1-1}
\frac{\partial w}{\partial t}=\nabla\cdot\left(w^3\nabla u\right),\quad x\in\Omega,\ t\geq 0;
\ee
\be\label{Chap5cp2-1-2}
\frac{\partial^2w}{\partial t^2}=\Delta w-\Delta^2w-\frac{\beta_F}{w^2}+\beta_p(u-1),\quad x\in\Omega,\ t\geq 0;
\ee
\be\label{Chap5cp2-2}
w(x,0)=w_0(x),\ \frac{\partial w}{\partial t}(x,0)=v_0(x),\quad x\in {\Omega};
\ee
\be\label{Chap5cp2-3}
u(x,t)=\theta_1, \ w(x,t)=\theta_2, \ \Delta w(x,t)=0,\quad x\in\partial\Omega,\ t\geq 0.
\ee
\ese
Here $u(x,t)$ and $w(x,t)$ are the unknown functions, corresponding to gas pressure and gap width respectively,  $\Omega\subset\mathbb{R}^n$ is a bounded and open region with smooth boundary $\partial\Omega$, $n=1,\ 2$; $\beta_F>0$, $\beta_p>0$, $\theta_1>0$ and $\theta_2>0$ are  given constants; $v_0=v_0(x)$ and $w_0=w_0(x)$ are  given functions. We shall prove the following wellposedness for short time:
\begin{theorem}\label{Chap5coupled system}
Let $\Omega\subseteq\mathbb{R}^n$ with smooth boundary, $n=1,\ 2$. For sufficiently smooth, positive initial values  compatible with the boundary conditions, there exists a time $T_0>0$ such that the initial-boundary value problem  \eqref{Chap5cp2} has a unique strict solution $(u, w)$, and
\[u\in C\left([0, T_0); H^1(\Omega)\right),\ w\in C^{2}\left([0, T_0); L^2(\Omega)\right)\cap C^{1}([0, T_0); H^2(\Omega))\cap C\left([0, T_0); H^4(\Omega)\right).\]
\end{theorem}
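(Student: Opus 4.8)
The plan is to follow the three-step scheme announced in the abstract: first eliminate the pressure $u$ by solving the elliptic equation \eqref{Chap5cp2-1-1} for $u$ in terms of $w$ and $\partial_t w$; then substitute into \eqref{Chap5cp2-1-2} to obtain a single abstract semilinear second-order evolution equation for $w$; and finally solve that abstract problem by semigroup methods together with a fixed-point argument, and recover the stated regularity. For Step~1, rewrite \eqref{Chap5cp2-1-1} as the linear elliptic boundary value problem $\nabla\cdot(w^3\nabla u)=g$ in $\Omega$, $u=\theta_1$ on $\partial\Omega$, with $g:=\partial_t w$. If $w\geq\delta>0$ and $w\in H^4(\Omega)\hookrightarrow C^2(\overline\Omega)$ (using $n\le 3$), the coefficient $w^3$ is uniformly elliptic and of class $C^2$, so for $g\in L^2(\Omega)$ there is a unique solution $u=:S(w,g)\in H^2(\Omega)$, and a bootstrap using $w^3\in H^4$ and $g\in H^2$ even gives $u\in H^4(\Omega)$, in particular $u\in C([0,T);H^1(\Omega))$ along the solution. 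The decisive point is Lipschitz dependence: writing $u_i=S(w_i,g_i)$, the difference $u_1-u_2$ solves an elliptic problem with coefficient $w_1^3$, zero boundary data, and right-hand side $g_1-g_2-\nabla\cdot\big((w_1^3-w_2^3)\nabla u_2\big)$, whence elliptic regularity yields
\[\|S(w_1,g_1)-S(w_2,g_2)\|_{H^2}\;\lesssim\;\|g_1-g_2\|_{L^2}+C\big(\delta,\|w_1\|_{H^4},\|w_2\|_{H^4},\|g_2\|_{L^2}\big)\,\|w_1-w_2\|_{H^4},\]
so $S$ is locally Lipschitz from $\{w\geq\delta\}\subset H^4(\Omega)$ times $L^2(\Omega)$ into $H^2(\Omega)$.

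For Step~2, let $A:=\Delta^2-\Delta$ on $L^2(\Omega)$ with domain $D(A)=\{w\in H^4(\Omega):w=\Delta w=0\text{ on }\partial\Omega\}$, the hinged boundary conditions of \eqref{Chap5cp2-3}. Then $A$ is self-adjoint, positive definite (indeed $\langle Aw,w\rangle=\|\Delta w\|_{L^2}^2+\|\nabla w\|_{L^2}^2$), has compact resolvent, $0\in\rho(A)$, and $D(A^{1/2})=H^2(\Omega)\cap H_0^1(\Omega)$. Hence the wave operator $\mathcal A(w,v)=(v,-Aw)$ generates a unitary $C_0$-group on the energy space $X:=D(A^{1/2})\times L^2(\Omega)$, with $D(\mathcal A)=D(A)\times D(A^{1/2})$. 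To deal with the inhomogeneous boundary datum $w=\theta_2$ and with the fact that the forcing $-\beta_F w^{-2}+\beta_p(u-1)$ does not vanish on $\partial\Omega$, note that on $\partial\Omega$ this forcing equals the single constant $c_0:=-\beta_F\theta_2^{-2}+\beta_p(\theta_1-1)$; setting $\widehat w:=w-\theta_2-A^{-1}c_0$ turns \eqref{Chap5cp2} into
\[\partial_t^2\widehat w+A\widehat w=G(\widehat w,\partial_t\widehat w),\qquad G(\widehat w,v):=-\frac{\beta_F}{(\widehat w+\theta_2+A^{-1}c_0)^2}+\beta_p\big(S(\widehat w+\theta_2+A^{-1}c_0,v)-1\big)-c_0,\]
with initial data $(\widehat w_0,v_0)=(w_0-\theta_2-A^{-1}c_0,v_0)$; the compatibility hypotheses on $(w_0,v_0)$ are precisely what is needed so that $(\widehat w_0,v_0)\in D(\mathcal A)$.

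For Step~3, using the Sobolev facts valid for $n\le 3$ (so $H^2$ and $H^4$ are Banach algebras and $H^2\hookrightarrow L^\infty$), positivity of $w$ on a short time interval — enforced, as usual, by modifying $s\mapsto s^{-2}$ for $s<\delta/2$ to a globally smooth bounded function and checking a posteriori that the solution stays in $\{w\ge\delta\}$ — and the Lipschitz estimate for $S$ from Step~1, I would show that $G$ maps $D(A)\times D(A^{1/2})$ into $D(A^{1/2})=H^2\cap H_0^1$ and is locally Lipschitz there (the subtraction of $c_0$ is exactly what restores the zero trace). The standard local existence theory for semilinear evolution equations driven by a group generator, with a nonlinearity locally Lipschitz from $D(\mathcal A)$ into $D(\mathcal A)$, then produces via Picard iteration a unique $(\widehat w,v)\in C([0,T_0);D(A)\times D(A^{1/2}))\cap C^1([0,T_0);D(A^{1/2})\times L^2)$ with $v=\partial_t\widehat w$; undoing the shift gives $w\in C([0,T_0);H^4)\cap C^1([0,T_0);H^2)$, the equation then gives $\partial_t^2 w=G-A\widehat w\in C([0,T_0);L^2)$ so that $w\in C^2([0,T_0);L^2)$, and $u=S(w,\partial_t w)\in C([0,T_0);H^1)$. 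Uniqueness for the coupled system follows by feeding two solutions into the Lipschitz bounds for $S$ and for $G$ and applying Gronwall's inequality.

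I expect the main obstacle to be Step~1 together with the positivity bookkeeping: since the coefficient $w^3$ of the elliptic equation is itself the unknown and the source is $\partial_t w$, one must control differences of solutions of elliptic equations with \emph{variable} coefficients, which forces carrying enough spatial regularity of $w$ (here $H^4$) through the whole scheme and keeping $w$ uniformly bounded below on a time interval compatible with the one delivered by the fixed-point argument. The secondary delicate point is verifying that $G$ genuinely takes values in $D(A^{1/2})$, i.e.\ that after subtracting the boundary constant $c_0$ the remaining forcing has zero trace and lies in $H^2$.
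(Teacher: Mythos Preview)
Your three-step scheme is correct in outline and would deliver the theorem, but it is \emph{not} the route taken in the paper, and the difference is worth recording. The paper only proves $H^1$-estimates for the elliptic solution operator (Lemma~\ref{Chap5REG}, Lemma~\ref{Chap5LIP-Elliptic}), so the forcing $G(\tilde w)+\beta_p\tilde u$ is treated merely as an element of $L^2(\Omega)$; consequently the fixed point is run in the energy space $\mathfrak X=L^2\times H_o^2$, giving only a mild solution (Theorem~\ref{Chap54th-solu-thm}). The upgrade to a strict solution is then obtained by an elaborate argument (Corollary~\ref{Chap5Lip-mild-solu} and Theorem~\ref{Chap54th-mild-solution-cor}): one shows the mild solution is Lipschitz in $t$, solves an auxiliary linear non-autonomous problem for the putative time derivative using the Fr\'echet derivatives $D_v\mathbf S_o$, $D_w\mathbf S_o$ (whose uniform continuity is established in Lemma~\ref{Chap5Fre-uniformly-continuity}), and proves the difference quotients converge, so that Lemma~\ref{IEE-S} applies via the criterion $\mathcal G\in C^1([0,T];\mathfrak X)$. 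By contrast, you push the elliptic theory to $H^2$ (bootstrapping with $w\in H^4$), introduce the shift $\widehat w=w-\theta_2-A^{-1}c_0$ to force the nonlinearity into $D(A^{1/2})=H^2\cap H_0^1$, and then run the contraction directly on $C([0,T];D(\mathcal A))$, so that the strict-solution conclusion follows immediately from the $\mathcal G\in C([0,T];D(\mathcal A))$ branch of Lemma~\ref{IEE-S}. Your route is shorter and avoids the Fr\'echet-derivative machinery entirely, at the price of needing $H^2$ regularity for divergence-form elliptic operators with $W^{1,\infty}$ coefficients and the trace-correction trick; the paper's route needs only Lax--Milgram-level estimates but pays for it with the lengthy differentiability argument in Section~\ref{Chap54th-order problem}. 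Note also that the paper never introduces your $A^{-1}c_0$ shift: it does not need the forcing to have zero trace because it never asks the forcing to lie in $H_o^2$.
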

The explicit assumptions on the initial conditions will be specified in Section \ref{Chap54th-order problem}. Note that global-in-time solutions are not necessarily expected, as quenching singularities with $\displaystyle\inf_{\Omega} w(t)\to 0$ may develop in finite time \cite{GHL22}.
\\
The model \eqref{Chap5cp2} describes the behaviour of a simple electrically actuated MEMS (Micro-Electro-Mechanical Systems) capacitor (see, for example, \cite{JS}). Such a device contains two conducting plates which, when the device is uncharged and at equilibrium, are close and parallel to each other. More generally, we suppose a fixed potential difference is applied; this potential difference acts across the plates, so that the MEMS device forms a capacitor. The two plates lie inside a sealed box also containing a {rarefied but effectively incompressible} gas, as opposed to a perfect vacuum. This gas then serves to give a small resistance to the motion of the upper of the plates, this plate being flexible but pinned around its edges. The other, lower, plate is taken to be rigid and flat.
	
	 Eqn.~{\eqref{Chap5cp2-1-1}} is the usual version of the standard Reynolds' equation, for the flow of an incompressible fluid through a narrow gap (width $w$) driven by pressure $u$  
(see, for example, {\cite{OO}}).  
Here it is assumed that the squeeze film action is very slow, so the gas has pressure and density nearly constant, and flow can thus be regarded as incompressible. 
	The paper \cite{BY} studies the incompressible squeeze film equation {\eqref{Chap5cp2-1-1}} by  linearizing further, for cases where deflection of the plate from its equilibrium position is small ($w =$ const., to leading order), and thinking of it as Poisson's equation $\Delta u = \frac{\partial w}{\partial t}$.
	
	The upper part of the capacitor acts as a thin elastic plate so that its motion can be taken to be given by a dynamic plate equation balancing the inertial term on the left-hand side of {\eqref{Chap5cp2-1-2}} with the biharmonic term modelling linear elasticity, the second term on the right {\cite{HKO}}. Taking there to be a significant tension acting across the plate gives, additionally, the first term on the right: the more usual Laplacian, as for a membrane. The third term is the electrostatic force attracting the upper plate towards the lower. The strength of this force per unit area is given by the local electric field strength times the surface charge density, the latter itself being proportional to the former, while this, the field strength, is inversely proportional to the gap width $w$. The final, fourth, term is simply the net upward gas pressure acting on the plate: pressure in the gap acting up and constant ambient pressure acting down. For more details see {\cite{EGG}, \cite{BP}, \cite{JS}}. 

\

Our proof of Theorem \ref{Chap5coupled system} relies on semigroup techniques developed for semilinear hyperbolic equations. Using the solution operator for the first, elliptic equation, we reduce the system to an abstract evolution equation, to which the functional analytic methods are then applied. Semigroup methods have become a powerful tool for MEMS-related models defined by a single equation or by an elliptic-parabolic coupled system, see the recent survey \cite{LW}.

	Before proceeding, we review some of the most relevant literature which studies related models MEMS devices, both numerically and analytically, to obtain qualitative behaviour, and note some other work on similar systems. 
	
	The static deflection of charged elastic plates in electrostatic actuators can be represented by a nonlinear elliptic equation
	\begin{equation}\label{BendingEqn3}
		-\Delta w+\beta_e\Delta^2 w=-\frac{\beta_F}{w^2}, 
	\end{equation}
where $\beta_F$ is an electrostatic coefficient, depending upon applied voltage and giving the strength of attraction between the plates, and $\beta_e$ is fixed by the elastic moduli of the plate material in comparison with imposed tension. 
	Lin et al.~\cite{LY} study the existence, construction, approximation, and behaviour of classical and singular solutions to equation \eqref{BendingEqn3}. Other such problems can be found in references  \cite{EG}, \cite{VM}. 	From Chapter 12 in \cite{EGG}, there is a value $\beta^*\in(0, \infty)$  such that for $0<\beta_F<\beta^*$ there exists at least one weak solution to \eqref{BendingEqn3}, while no solution exists for $\beta_F>\beta^*$.

	To model the behaviour of the plate gap width $w$ over time, one has to consider the momentum of the plate as it deforms and damping forces, as well as the elastic nature of the plate and the electrostatic force. Then a single equation of motion which has been studied is
	\begin{equation}\label{MotEqn}
		\epsilon^2\frac{\partial^2w}{\partial t^2}+\frac{\partial w}{\partial t}-\Delta w+\beta_e\Delta^2 w=-\frac{\beta_F}{w^2}.
	\end{equation}
	\[ \epsilon^2 = \frac{\text{inertial\ coefficient}}{\text{damping\ coefficient}}, \]
the second term in (\ref{MotEqn}) having been introduced to account for damping.
	
	For a physical model, the equation applies on a bounded domain of $\mathbb{R}^n$, $1\leq n\leq 3$.
	In Guo, \cite{Guo}, quenching is seen to occur, that is, $w$ falls to zero in a finite time, so that a solution of (\ref{MotEqn}) will cease to exist, if $\beta_F$ exceeds the critical value $\beta^*$ for the time-independent problem \eqref{BendingEqn3}. This creates what, physically, is called the pull-in instability, when the two plates touch (also referred to as ``touch-down''). Guo \cite{Guo} shows that there exists a $\beta_{F_{1}}\in(0,\ \beta^*]$ such that for $0\leq \beta_F<\beta_{F_{1}}$, the solution of an initial boundary value problem for \eqref{MotEqn} globally exists. Under some further technical hypotheses, in this case the solution exponentially converges to a regular steady state.
See the survey article \cite{LW} for a discussion of a wider class of models arising in the description of MEMS.	\\

\noindent The plan of the paper is as follows: In Section 2, we introduce the relevant function spaces and some of their basic properties. In Section 3, we establish auxiliary  estimates for the nonlinearity in \eqref{Chap5cp2-1-2} and introduce the notions of both mild and strict solutions of  evolution equations. The elliptic problem \eqref{Chap5cp2-1-1} for the unknown $u$ is discussed in Section 4. The results from Sections 3 and 4 are then used in Section 5 to investigate the local well-posedness of the initial-boundary value problem \eqref{Chap5cp2}, leading to Theorem \ref{Chap5coupled system}.

\section{Notation}\label{SecNotations}
Denote by $\beta_F$, $\beta_p$, $\theta$ given positive constants. Let $\Omega\subset\mathbb{R}^n$ be an open and bounded subset with smooth boundary $\partial\Omega$, $n=1,\ 2$. Denote by $C=C(\Omega)$ a positive constant which may vary from line to line below but only depends on $\Omega$.
\be\label{B-0}
H^4_o(\Omega)=\left\{\psi\in H^4(\Omega): \psi|_{\partial\Omega}=\Delta\psi\mid_{\partial\Omega}=0\right\},\quad H^2_o(\Omega)=H^{2}(\Omega)\cap H^1_{0}(\Omega),
\ee
\be\label{B-1}
H^4_{\theta}(\Omega)=\left\{\psi\in H^4(\Omega): \psi(x)=\theta,\ \Delta\psi(x)=0,\ {x\in\partial\Omega}\right\},
\ee
\be\label{B2-1}
H^2_{\theta}(\Omega)=\left\{\psi\in H^2(\Omega):\ \psi\mid_{\partial\Omega}=\theta\right\}.
\ee
\begin{definition}
	We denote by $X$ a Banach space, with norm $\|\cdot\|_{X}$. For $k\in\mathbb{N}$ and $T\in(0, \infty)$, $\mathcal{B}(X)$ denotes the space of bounded linear operators on $X$. In the following we shall be particularly interested in $X=L^2(\Omega)$, $L^\infty(\Omega)$ and $H^k(\Omega)$. The space $\mathcal{B}([0,T];X)$ consists of all  measurable, almost everywhere bounded functions $u: t\in[0,T]\longrightarrow u(t)\in X$, with norm $\|u\|_{\mathcal{B}([0,T];X)}=\mathrm{esssup}_{t\in[0, T]}\|u(t)\|_{X}$. If $X$ is a function space as above, we write $u(t): x\in\Omega\longrightarrow[u(t)](x)=u(x,t)\in\mathbb{R}$. The closed subspace of continuous functions is denoted by $C([0,T];X)$, and
	\[C^k([0,T];X)=\left\{u:[0,T]\rightarrow X: \quad \frac{d^ju}{dt^j}\in C([0,T];X),\quad 0\leq j\leq k\right\},\] \[\|u\|_{C^k([0,T];X)}=\sup_{t\in[0, T]}\sum_{j=0}^{k}\left\|\frac{d^ju(t)}{dt^j}\right\|_{X}.\]
\end{definition}

Our main results will be shown by constructing a Picard iteration in the complete metric space  $\mathcal{Z}(T)$, given by
\begin{align}
	\mathcal{Z}(T):=\bigg\{&(\tilde{v},\tilde{w})\in C\left([0,T]; L^2(\Omega)\times H_o^2(\Omega)\right):\
	\left(\tilde{v}(0), \tilde{w}(0)\right)=\left(\tilde{v}_0, \tilde{w}_0\right),\notag\\
	&\sup_{t\in[0, T]}\|(\tilde{v}(t)-\tilde{v}_0,\tilde{w}(t)-\tilde{w}_0)\|_{L^2(\Omega)\times H^{2}(\Omega)}\leq r\bigg\}\label{ini-NBD}.
\end{align}
\section{Preliminaries} \label{Sec:prerequisite}
In this section,  we recall some well-known properties of the Sobolev spaces  $H^k(\Omega)$, where $k>0$,  and formulate  some Corollaries which will be useful in the proof of the main results. 
The proofs of Corollary \ref{estimates} and Corollary \ref{Lip-G-Lem} can be found in Appendix \ref{AppA}. We then state a general existence result for  evolution equations and the regularity in time without proof.

\subsection{Algebra Property of Sobolev Spaces}

The algebra property of Sobolev spaces will be crucial in this work, see \cite{TT} for a proof.
\begin{lemma}\label{alg}
	$H^k(\Omega)$ is an algebra when $k>\frac{n}{2}$. In particular, $H^1(\Omega)$ is an algebra if $\Omega\subset\mathbb{R}$ and $H^2(\Omega)$ is an algebra if $\Omega\subset \mathbb{R}^n$, $n=1,\ 2$.
\end{lemma}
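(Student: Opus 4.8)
The plan is to reduce to the whole space by an extension argument and then run a Fourier-multiplier estimate. Since $\partial\Omega$ is smooth, there is a bounded linear extension operator $E\colon H^k(\Omega)\to H^k(\mathbb{R}^n)$, while restriction $H^k(\mathbb{R}^n)\to H^k(\Omega)$ is obviously bounded. Hence it suffices to prove the product estimate $\|fg\|_{H^k(\mathbb{R}^n)}\le C\|f\|_{H^k(\mathbb{R}^n)}\|g\|_{H^k(\mathbb{R}^n)}$ for $f,g\in H^k(\mathbb{R}^n)$: applying it to $Ef$ and $Eg$ and restricting (and using that $(Ef)(Eg)$ restricts to $fg$) yields both the algebra property and a norm bound on $\Omega$.

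On $\mathbb{R}^n$ one uses the Bessel-potential characterization $\|h\|_{H^k}^2\simeq\int_{\mathbb{R}^n}\langle\xi\rangle^{2k}|\hat h(\xi)|^2\,d\xi$, with $\langle\xi\rangle=(1+|\xi|^2)^{1/2}$. Since $\widehat{fg}=\hat f*\hat g$ up to a constant, and since for $k\ge 0$ one has the elementary subadditivity $\langle\xi\rangle^k\le C_k\bigl(\langle\xi-\eta\rangle^k+\langle\eta\rangle^k\bigr)$, we split
\[
\langle\xi\rangle^k|\widehat{fg}(\xi)|\le C_k\!\int\langle\xi-\eta\rangle^k|\hat f(\xi-\eta)|\,|\hat g(\eta)|\,d\eta+C_k\!\int|\hat f(\xi-\eta)|\,\langle\eta\rangle^k|\hat g(\eta)|\,d\eta.
\]
The first term equals $\bigl(\langle\cdot\rangle^k|\hat f|\bigr)*|\hat g|$; Young's inequality bounds its $L^2$-norm by $\|\langle\cdot\rangle^k\hat f\|_{L^2}\,\|\hat g\|_{L^1}$, and Cauchy--Schwarz gives $\|\hat g\|_{L^1}\le\|\langle\cdot\rangle^{-k}\|_{L^2}\,\|\langle\cdot\rangle^k\hat g\|_{L^2}$, where $\|\langle\cdot\rangle^{-k}\|_{L^2(\mathbb{R}^n)}<\infty$ exactly because $2k>n$. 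The second term is symmetric. Adding the two pieces gives $\|\langle\cdot\rangle^k\widehat{fg}\|_{L^2}\le C\|f\|_{H^k}\|g\|_{H^k}$, which is the required estimate.

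The only delicate point is the exponent condition: it is precisely the integrability $\langle\cdot\rangle^{-k}\in L^2(\mathbb{R}^n)$, i.e.\ $k>n/2$, that makes the Cauchy--Schwarz step work, so the hypothesis is used essentially here (it is also the reason the heuristic $H^k\hookrightarrow L^\infty$ makes a product estimate plausible). For the concrete cases needed later ($H^1$ if $n=1$, $H^2$ if $n=1,2$) one could instead argue with integer $k$ via the Leibniz rule and the Gagliardo--Nirenberg interpolation inequalities, but the Fourier argument is the most economical and handles all real $k>n/2$ uniformly. I expect no real obstacle beyond bookkeeping; the main things to get right are keeping track of the two convolution pieces and invoking the extension operator so the estimate transfers from $\mathbb{R}^n$ to $\Omega$.
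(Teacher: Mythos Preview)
Your argument is correct and is one of the standard proofs of this fact. Note, however, that the paper does not give its own proof of this lemma: it simply states the result and refers to an external reference, so there is no approach to compare against. Your extension-to-$\mathbb{R}^n$ plus Fourier-multiplier estimate (with the key step $\langle\cdot\rangle^{-k}\in L^2(\mathbb{R}^n)$ iff $2k>n$) is exactly the kind of argument one would find in a textbook treatment.
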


We deduce some immediate consequences.

\begin{corollary}\label{estimates}
	There exists a constant $C=C(\Omega)>0$, such that for all $r\in\left(0, \frac{\kappa}{2C}\right)$,
	$w\in B_r\left(w_0, T\right)$ satisfies the lower bound 
	\be\label{w-lower-bound}
	w(t)\geq\frac{\kappa}{2},\quad \forall\ t\in[0, T].
	\ee
	Moreover, for all $w_1$, $w_2\in B_r(w_0, T)$, there exist positive constants $C_k$, $k=1,\ 2,\ 3$, depending on $\Omega$, $\kappa$ and $\left\|w_0\right\|_{H^2(\Omega)}$, such that
	\be\label{C-a}
	\sup_{t\in[0, T]}\left\|\frac{1}{[w_1(t)]^{k}}\right\|_{H^2(\Omega)}\leq {C_1^k},\quad k=1,\ 2,\ 3,
	\ee
	\be\label{C-d}
	\sup_{t\in[0, T]}\left\|\frac{1}{[w_1(t)]^{k}}-\frac{1}{[w_2(t)]^{k}}\right\|_{H^2(\Omega)}\leq C_k\sup_{t\in[0, T]}\left\|w_1(t)-w_2(t)\right\|_{H^2(\Omega)},\ k=2,\ 3.
	\ee
\end{corollary}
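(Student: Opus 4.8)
The plan is to rely on the algebra property of $H^2(\Omega)$ (Lemma \ref{alg}, valid since $n\le 2$) together with the continuous embedding $H^2(\Omega)\hookrightarrow C(\overline\Omega)$ and the assumed continuity in time of the curves in $B_r(w_0,T)$. First I would establish the lower bound \eqref{w-lower-bound}. By hypothesis $w\in B_r(w_0,T)$ means $w\in C([0,T];H^2(\Omega))$ with $w(0)=w_0$ and $\sup_{t}\|w(t)-w_0\|_{H^2(\Omega)}\le r$. The Sobolev embedding $\|\varphi\|_{L^\infty(\Omega)}\le C\|\varphi\|_{H^2(\Omega)}$ gives $\|w(t)-w_0\|_{L^\infty(\Omega)}\le Cr$ for all $t$. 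Since $w_0\ge\kappa$ on $\overline\Omega$ (this is where the constant $\kappa$ enters: $\kappa=\min_{\overline\Omega}w_0>0$ by positivity of the initial data), we get $w(x,t)\ge\kappa-Cr$ pointwise; choosing $r<\kappa/(2C)$ yields $w(t)\ge\kappa/2$, which is \eqref{w-lower-bound}.

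Next, for \eqref{C-a}, I would write $1/[w_1(t)]^k = g_k(w_1(t))$ where $g_k(s)=s^{-k}$ is smooth on $[\kappa/2,\infty)$. The standard composition estimate in $H^2$ (for $n\le 2$, using the algebra property and chain rule: $\nabla(g_k\circ w_1)=g_k'(w_1)\nabla w_1$, $\nabla^2(g_k\circ w_1)=g_k''(w_1)\nabla w_1\otimes\nabla w_1 + g_k'(w_1)\nabla^2 w_1$, with every factor controlled in $H^2$ or $L^\infty$) bounds $\|g_k(w_1(t))\|_{H^2(\Omega)}$ in terms of $\sup_{[\kappa/2,M]}(|g_k|+|g_k'|+|g_k''|)$ and powers of $\|w_1(t)\|_{H^2(\Omega)}$, where $M$ is a bound on $\|w_1(t)\|_{L^\infty}$ coming from $\|w_1(t)\|_{H^2}\le\|w_0\|_{H^2}+r$. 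Since $g_k$ and its first two derivatives are bounded on $[\kappa/2,M]$ by constants depending only on $\kappa$ and $M$, we obtain \eqref{C-a} with $C_1$ depending on $\Omega$, $\kappa$, $\|w_0\|_{H^2(\Omega)}$ (absorbing $r<\kappa/(2C)$ into these).

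For the Lipschitz bound \eqref{C-d}, I would use the telescoping identity
\[
\frac{1}{[w_1]^k}-\frac{1}{[w_2]^k}=(w_2-w_1)\,\sum_{j=0}^{k-1}\frac{1}{[w_1]^{k-j}[w_2]^{j+1}},
\]
valid for $k=2,3$, and then estimate the right-hand side in $H^2(\Omega)$ using the algebra property: the prefactor $w_2-w_1$ is controlled by $\sup_t\|w_1(t)-w_2(t)\|_{H^2(\Omega)}$, and each term $1/([w_1]^{k-j}[w_2]^{j+1})$ is a product of factors each bounded in $H^2(\Omega)$ by \eqref{C-a} (with $w_1$ and $w_2$ both in $B_r(w_0,T)$, so both satisfy the lower bound and the upper bound). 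Multiplying out with the algebra constant gives \eqref{C-d} with $C_2,C_3$ again depending only on $\Omega$, $\kappa$, $\|w_0\|_{H^2(\Omega)}$.

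The only mild obstacle is bookkeeping: one must make sure that the composition/product estimates in $H^2$ are applied only in dimensions $n\le 2$ (where $H^2$ is an algebra and embeds in $L^\infty$), and that all constants are tracked to depend solely on the stated quantities; there is no deep difficulty, since positivity of $w_0$ plus the smallness of $r$ keeps every $w(t)$ uniformly bounded away from $0$ and above, so the nonlinearity $s\mapsto s^{-k}$ is being composed with functions taking values in a fixed compact subinterval of $(0,\infty)$ on which it is uniformly smooth. The detailed verification is deferred to Appendix \ref{AppA}.
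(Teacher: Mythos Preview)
Your proposal is correct and follows essentially the same route as the paper: Sobolev embedding for the pointwise lower bound, direct chain-rule/composition estimates for $\|1/w\|_{H^2}$ (the paper carries this out explicitly for $k=1$ and then invokes the algebra property for $k=2,3$, which is exactly your $g_k$ argument unwound), and the algebraic factorization $1/w_1^k-1/w_2^k=(w_2-w_1)\cdot(\text{product of reciprocals})$ combined with Lemma~\ref{alg} for the Lipschitz estimate.
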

\begin{corollary}\label{Lip-G-Lem}
	The operator $G$, defined  by
	\bse\label{G-def}
	\be\label{G-def1}
	G: \tilde{w}\in B_r\left(\tilde{w}_0, T\right)\longrightarrow G(\tilde{w})\in C([0, T]; H^2(\Omega)),
	\ee
	\be\label{G-def2}
	[G(\tilde{w})](t)=G(\tilde{w}(t))=-\frac{\beta_F}{[\tilde{w}(t)+\theta_2]^2}+\beta_p(\theta_1-1),
	\ee
	\ese
	has the following properties:
	\be\label{Holdercontinuous}
	\sup_{0\leq t<t+h\leq T}\left\|[G(\tilde{w})](t+h)-[G(\tilde{w})](t)\right\|_{H^{2}(\Omega)}\leq L_G\sup_{0\leq t<t+h\leq T}\left\|\tilde{w}(t+h)-\tilde{w}(t)\right\|_{H^{2}(\Omega)},
	\ee
	\be\label{Lip-G}
	\sup_{t\in[0, T]}\left\|[G(\tilde{w}_1)](t)-[G(\tilde{w}_2)](t)\right\|_{H^{2}(\Omega)}\leq L_G\sup_{t\in[0, T]}\left\|\tilde{w}_1(t)-\tilde{w}_2(t)\right\|_{H^{2}(\Omega)},
	\ee
	\be\label{Lip-G-1}
	\sup_{t\in[0, T]}\left\|[G(\tilde{w}_1)](t)-G(\tilde{w}_0)\right\|_{H^{2}(\Omega)}\leq L_G r.
	\ee
	Here $L_G=L_G\left(\Omega,\ \kappa,\ \|w_0\|_{H^{2}(\Omega)},\ \beta_F\right)$ is a constant.\\
	Furthermore, the Fr\'{e}chet derivative $G'(\tilde{w})$ of $G(\tilde{w})$ depending on  $\tilde{w}\in B_r\left(\tilde{w}_0, T\right)$, 	\bse\label{Fre-G-def-w}
	\be\label{Fre-G-def-w1}
	G'\left(\tilde{w}\right):q\in C\left([0, T]; H_o^2(\Omega)\right)\longrightarrow G'\left(\tilde{w}\right)q\in C\left([0, T]; H_o^2(\Omega)\right),\
	\ee
	\be\label{Fre-G-def-w2}
	\left[G'\left(\tilde{w}\right)q\right](t)=\left[G'\left(\tilde{w}(t)\right)\right]q(t)=\frac{2\beta_F}{\left(\tilde{w}(t)+\theta_2\right)^3}q(t),
	\ee
	\ese
	satisfies
	\be\label{Lip-G-2}
	\sup_{t\in[0, T]}\left\|\left[G'\left(\tilde{w}\right)q\right](t)\right\|_{H^2(\Omega)}\leq L_G\sup_{t\in[0, T]}\left\|q(t)\right\|_{H^2(\Omega)},
	\ee
	and $G'(\tilde{w}(t)): H_o^2(\Omega)\longrightarrow H_o^2(\Omega)$ satisfies
	\be\label{uniformly-continuous-Fre-G}
	\lim_{h\rightarrow0}\sup_{\begin{smallmatrix}0\leq t\leq t+ h\leq T\\ 0\leq\tau\leq 1\end{smallmatrix}}\left\|G'\left(\tilde{w}(t)+\tau\left[\tilde{w}(t+h)-\tilde{w}(t)\right]\right)-G'\left(\tilde{w}(t)\right)\right\|_{\mathcal{B}\left(H_o^2(\Omega)\right)}=0.
	\ee
\end{corollary}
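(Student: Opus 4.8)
The plan is to reduce all of \eqref{Holdercontinuous}--\eqref{uniformly-continuous-Fre-G} to the algebra property of $H^2(\Omega)$ (Lemma \ref{alg}, available since $n\le 2$) together with the uniform bounds \eqref{C-a} and the Lipschitz estimates \eqref{C-d} of Corollary \ref{estimates}, applied to the shifted gap width $w:=\tilde w+\theta_2$. Since $\tilde w\in B_r(\tilde w_0,T)$ is equivalent to $w\in B_r(w_0,T)$ with $w_0:=\tilde w_0+\theta_2$, and with $\kappa$ a positive lower bound for $w_0$, Corollary \ref{estimates} applies verbatim to $w$. Throughout, $L_G$ is taken to be the maximum of the finitely many constants produced below.

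\textbf{The Lipschitz and H\"older bounds \eqref{Holdercontinuous}--\eqref{Lip-G-1}.} All three rest on the single identity
\[
[G(\tilde w_1)](t)-[G(\tilde w_2)](t)=-\beta_F\left(\frac{1}{[\tilde w_1(t)+\theta_2]^2}-\frac{1}{[\tilde w_2(t)+\theta_2]^2}\right).
\]
Estimate \eqref{C-d} with $k=2$ immediately yields \eqref{Lip-G} with $L_G\ge\beta_F C_2$; taking $\tilde w_2\equiv\tilde w_0$ and using $\sup_t\|\tilde w_1(t)-\tilde w_0\|_{H^2(\Omega)}\le r$ gives \eqref{Lip-G-1}; and choosing $\tilde w_1=\tilde w(\cdot+h)$, $\tilde w_2=\tilde w(\cdot)$ gives \eqref{Holdercontinuous}, since $\tilde w(t+h)+\theta_2$ and $\tilde w(t)+\theta_2$ both lie in $B_r(w_0,T)$. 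The same Lipschitz bound shows $t\mapsto[G(\tilde w)](t)$ is continuous into $H^2(\Omega)$, so $G$ indeed maps $B_r(\tilde w_0,T)$ into $C([0,T];H^2(\Omega))$.

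\textbf{The Fr\'echet derivative \eqref{Fre-G-def-w}--\eqref{Lip-G-2}.} Fix $\tilde w\in B_r(\tilde w_0,T)$ and $q\in C([0,T];H^2_o(\Omega))$ with $\|q\|$ small. The second-order Taylor expansion of $s\mapsto s^{-2}$ with integral remainder gives, pointwise in $t$,
\[
\frac{1}{[\tilde w(t)+q(t)+\theta_2]^2}-\frac{1}{[\tilde w(t)+\theta_2]^2}+\frac{2q(t)}{[\tilde w(t)+\theta_2]^3}=6\,q(t)^2\int_0^1\frac{1-\tau}{[\tilde w(t)+\tau q(t)+\theta_2]^4}\,d\tau .
\]
For $\|q\|$ small, $\tilde w+\tau q+\theta_2$ stays bounded below by a positive constant uniformly in $t,\tau$ (using $H^2(\Omega)\hookrightarrow L^\infty(\Omega)$) and its fourth negative power stays bounded in $H^2(\Omega)$ (same argument as in Corollary \ref{estimates}); the algebra property then bounds the $H^2$-norm of the right-hand side by $C\|q(t)\|_{H^2(\Omega)}^2=o(\|q\|)$, which identifies $G'(\tilde w)$ as in \eqref{Fre-G-def-w2}. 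Since $\tfrac{2\beta_F}{[\tilde w(t)+\theta_2]^3}\in H^2(\Omega)$, multiplication by it preserves $H^2(\Omega)$ (algebra) and the vanishing trace, hence maps $H^2_o(\Omega)$ into itself; \eqref{Lip-G-2} is then the algebra inequality combined with \eqref{C-a} for $k=3$, i.e.\ $\|[G'(\tilde w)q](t)\|_{H^2(\Omega)}\le 2\beta_F C C_1^3\|q(t)\|_{H^2(\Omega)}$.

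\textbf{Uniform continuity \eqref{uniformly-continuous-Fre-G} and main obstacle.} For any $\psi_i$ with $\psi_i+\theta_2\in B_r(w_0,T)$ and $q\in H^2_o(\Omega)$,
\[
\big\|\big(G'(\psi_1)-G'(\psi_2)\big)q\big\|_{H^2(\Omega)}\le 2\beta_F C\left\|\frac{1}{(\psi_1+\theta_2)^3}-\frac{1}{(\psi_2+\theta_2)^3}\right\|_{H^2(\Omega)}\|q\|_{H^2(\Omega)},
\]
so \eqref{C-d} with $k=3$ gives $\|G'(\psi_1)-G'(\psi_2)\|_{\mathcal B(H^2_o(\Omega))}\le 2\beta_F C C_3\|\psi_1-\psi_2\|_{H^2(\Omega)}$. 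Applying this with $\psi_1=\tilde w(t)+\tau[\tilde w(t+h)-\tilde w(t)]$ and $\psi_2=\tilde w(t)$ -- a convex combination of $\tilde w(t)$ and $\tilde w(t+h)$, so $\psi_1+\theta_2$ still lies in the convex ball $B_r(w_0,T)$ -- bounds the quantity in \eqref{uniformly-continuous-Fre-G} by $2\beta_F C C_3\sup_t\|\tilde w(t+h)-\tilde w(t)\|_{H^2(\Omega)}$, which tends to $0$ as $h\to 0$ because $\tilde w\in C([0,T];H^2_o(\Omega))$ is uniformly continuous on the compact interval $[0,T]$. I expect the only genuinely nontrivial point to be the Fr\'echet differentiability in the third step: one must control the Taylor remainder in the $H^2$-norm, which forces a check that the perturbed gap $\tilde w+\tau q+\theta_2$ -- which need not lie in $B_r(\tilde w_0,T)$ -- still admits a uniform positive lower bound and $H^2$-bounded negative powers; everything else is a direct consequence of Lemma \ref{alg} and Corollary \ref{estimates}.
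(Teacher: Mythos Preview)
Your proof is correct and follows essentially the same route as the paper's: both reduce everything to Corollary \ref{estimates} (the bounds \eqref{C-a}, \eqref{C-d}) together with the algebra property of $H^2(\Omega)$, and both obtain \eqref{uniformly-continuous-Fre-G} from the Lipschitz bound on $w\mapsto w^{-3}$ plus uniform continuity of $\tilde w$ on $[0,T]$. The one noteworthy difference is your treatment of the Fr\'echet derivative: the paper simply computes the directional limit $\lim_{\lambda\to 0}\lambda^{-1}[G(\tilde w+\lambda q)-G(\tilde w)]$ and declares the result to be $G'(\tilde w)q$, whereas you write out the second-order Taylor remainder explicitly and bound it in $H^2$ as $O(\|q\|^2)$, which is what genuine Fr\'echet differentiability requires; your identification of the need to control negative powers of the perturbed gap $\tilde w+\tau q+\theta_2$ (for $q$ small in $H^2$, via the embedding $H^2\hookrightarrow L^\infty$) is exactly the point that makes this step work, and it is handled the same way as in the proof of Corollary \ref{estimates}.
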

\subsection{Mild and Strict Solutions of Evolution Equations}
\begin{definition}
	Let $\mathfrak{X}$ be a Banach space, $\mathcal{A}: D(\mathcal{A}) \subset \mathfrak{X} \to \mathfrak{X}$ a linear, unbounded operator which  generates a strongly continuous semigroup ($C_0$-semigroup) $\{T(t): t\geq 0\}$. Further, let $T\in(0, \infty)$, $\mathcal{G}\in C([0, T]; \mathfrak{X})$ and $\Phi_0\in \mathfrak{X}$. A function $\Phi$ is called a mild solution of the  inhomogeneous evolution equation
	\be\label{IEE}
	\Phi'(t)=\mathcal{A}\Phi(t)+\mathcal{G}(t), \quad t\in [0, T],\quad \Phi(0)=\Phi_0,
	\ee
	if $\Phi\in C([0, T]; \mathfrak{X})$ is satisfies the integral formulation of \eqref{IEE},
	\be\label{linear solu}
	\Phi(t)=T(t)\Phi_0+\int_0^tT(t-s)\mathcal{G}(s)ds,\quad t\in[0, T].
	\ee
	A function $\Phi$ is said to be a strict solution of \eqref{IEE}, if $\Phi\in C([0, T]; D(\mathcal{A}))\cap C^1([0, T]; \mathfrak{X})$ is a mild solution and satisfies \eqref{IEE}.
\end{definition}

\begin{lemma}\label{IEE-S}
	Let the linear operator $\mathcal{A}$ defined on a Banach space $\mathfrak{X}$ generate the $C_0$-semigroup $\{T(t): t\geq 0\}$, $T\in(0, \infty)$, and $\Phi_0\in D(\mathcal{A})$. If $\mathcal{G}\in C([0, T]; \mathfrak{X})$ and $\Phi$ is a solution of the inhomogeneous evolution equation \eqref{IEE}, then $\Phi$ satisfies the integral formula \eqref{linear solu}.
	
	Assume either that $\mathcal{G}\in C([0, T]; D(\mathcal{A}))$ or that $\mathcal{G}\in C^1([0, T]; \mathfrak{X})$. Then the mild solution $\Phi$ defined by \eqref{linear solu} uniquely solves the inhomogeneous evolution equation \eqref{IEE} on $[0, T]$, and
	\[\Phi\in C([0, T]; D(\mathcal{A}))\cap C^1([0, T]; \mathfrak{X}).\]
\end{lemma}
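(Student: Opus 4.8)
The plan is to prove Lemma \ref{IEE-S} in the usual two stages that are standard in linear semigroup theory (following, e.g., Pazy or Engel--Nagel), first establishing the integral representation \eqref{linear solu} for any solution, and then showing that under either of the two regularity hypotheses on $\mathcal{G}$ the function defined by \eqref{linear solu} is in fact a strict solution, together with uniqueness.

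\medskip

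\textbf{Step 1: any solution satisfies the integral formula.} Suppose $\Phi\in C([0,T];D(\mathcal{A}))\cap C^1([0,T];\mathfrak{X})$ solves \eqref{IEE}. Fix $t\in(0,T]$ and consider, for $s\in[0,t]$, the function $g(s):=T(t-s)\Phi(s)$. The plan is to differentiate $g$ in $s$: since $\Phi(s)\in D(\mathcal{A})$ and $s\mapsto\Phi(s)$ is $C^1$ into $\mathfrak{X}$, the map $s\mapsto T(t-s)\Phi(s)$ is differentiable with
\[
g'(s)=-\mathcal{A}T(t-s)\Phi(s)+T(t-s)\Phi'(s)=-T(t-s)\mathcal{A}\Phi(s)+T(t-s)\big(\mathcal{A}\Phi(s)+\mathcal{G}(s)\big)=T(t-s)\mathcal{G}(s),
\]
using that $T(t-s)$ commutes with $\mathcal{A}$ on $D(\mathcal{A})$ and that $\tfrac{d}{ds}T(t-s)y=-\mathcal{A}T(t-s)y$ for $y\in D(\mathcal{A})$. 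Since $\mathcal{G}\in C([0,T];\mathfrak{X})$, the right-hand side is continuous in $s$, so $g$ is continuously differentiable and we may integrate from $0$ to $t$: $\Phi(t)-T(t)\Phi_0=\int_0^t T(t-s)\mathcal{G}(s)\,ds$, which is \eqref{linear solu}. This argument also gives uniqueness, since any two solutions have the same integral representation, hence coincide.

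\medskip

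\textbf{Step 2: regularity of the mild solution.} Now assume $\Phi$ is defined by \eqref{linear solu} with $\Phi_0\in D(\mathcal{A})$. Write $\Phi(t)=T(t)\Phi_0+v(t)$ with $v(t)=\int_0^t T(t-s)\mathcal{G}(s)\,ds$. The first term is a strict solution of the homogeneous problem since $\Phi_0\in D(\mathcal{A})$: $t\mapsto T(t)\Phi_0$ lies in $C([0,T];D(\mathcal{A}))\cap C^1([0,T];\mathfrak{X})$ with derivative $\mathcal{A}T(t)\Phi_0$. It remains to treat $v$. In the first case, $\mathcal{G}\in C([0,T];D(\mathcal{A}))$, one shows $v(t)\in D(\mathcal{A})$ with $\mathcal{A}v(t)=\int_0^t T(t-s)\mathcal{A}\mathcal{G}(s)\,ds$ (closedness of $\mathcal{A}$ plus the commutation $\mathcal{A}T(t-s)\mathcal{G}(s)=T(t-s)\mathcal{A}\mathcal{G}(s)$), and then $v'(t)=\mathcal{A}v(t)+\mathcal{G}(t)$ follows from differentiating the integral. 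In the second case, $\mathcal{G}\in C^1([0,T];\mathfrak{X})$, one rewrites $v(t)=\int_0^t T(\sigma)\mathcal{G}(t-\sigma)\,d\sigma$ and differentiates under the integral to get $v'(t)=T(t)\mathcal{G}(0)+\int_0^t T(\sigma)\mathcal{G}'(t-\sigma)\,d\sigma$; the fundamental identity $T(h)y-y=\int_0^h T(\sigma)\mathcal{A}y\,d\sigma$ applied in the difference quotient of $v$ then identifies $v'(t)-\mathcal{G}(t)$ as $\mathcal{A}v(t)$, so again $v(t)\in D(\mathcal{A})$ and \eqref{IEE} holds. In both cases one checks continuity of $t\mapsto\mathcal{A}v(t)$ and of $v'$ directly from the integral representations, giving $\Phi\in C([0,T];D(\mathcal{A}))\cap C^1([0,T];\mathfrak{X})$.

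\medskip

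\textbf{Main obstacle.} There is no deep obstacle here — this is a classical result — but the one point requiring genuine care is the differentiation of $s\mapsto T(t-s)\Phi(s)$ in Step 1 and, symmetrically, the justification in Step 2 that one may pull $\mathcal{A}$ through the integral defining $v$ (using that $\mathcal{A}$ is closed rather than bounded) and that the resulting Bochner integrals depend continuously on $t$. Since the paper states the lemma "without proof," the natural course is simply to cite a standard reference (e.g.\ Pazy, \emph{Semigroups of Linear Operators}, Ch.~4, or Engel--Nagel) for the full argument and move on.
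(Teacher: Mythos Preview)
Your proposal is correct and, as you anticipated in your final paragraph, matches exactly what the paper does: the paper gives no proof at all and simply refers to Theorem~6.9 in \cite{schnaubelt}. Your Steps~1 and~2 constitute the standard argument found in that reference (or equivalently in Pazy or Engel--Nagel), so there is nothing to compare.
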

We refer to Theorem 6.9 in \cite{schnaubelt} for the proof of Lemma \ref{IEE-S}.

\begin{lemma}\label{time-derivative-continuity}
	Let $\mathfrak{X}$ be a Banach space and $\Phi\in C([0, T]; \mathfrak{X})$ be differentiable from the right with right derivative $\Psi\in C\left([0, T]; \mathfrak{X}\right)$. Then $\Phi\in C^1\left([0, T]; \mathfrak{X}\right)$ and $\Phi'=\Psi$.
\end{lemma}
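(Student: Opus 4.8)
The plan is to compare $\Phi$ with a genuine antiderivative of $\Psi$ and show that the two differ only by a constant. First I would set
$F(t):=\int_0^t\Psi(s)\,ds$, the $\mathfrak{X}$-valued (Riemann, equivalently Bochner) integral of the continuous integrand $\Psi$. Since $\Psi\in C([0,T];\mathfrak{X})$, the difference quotient obeys $h^{-1}\big(F(t+h)-F(t)\big)=h^{-1}\int_t^{t+h}\Psi(s)\,ds\to\Psi(t)$ as $h\to0$ for $h$ of either sign, by continuity of $\Psi$ at $t$. Hence $F\in C^1([0,T];\mathfrak{X})$ with $F'=\Psi$ (a two-sided derivative).

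Next I would introduce $g:=\Phi-\Phi(0)-F\in C([0,T];\mathfrak{X})$. Then $g(0)=0$, and since both $\Phi$ and $F$ are differentiable from the right with right derivative $\Psi$ on $[0,T)$, the function $g$ is differentiable from the right everywhere on $[0,T)$ with right derivative identically $0$.

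The key step is to show that a continuous $\mathfrak{X}$-valued function with vanishing right derivative is constant. I would reduce this to the scalar case by Hahn--Banach: for any $\ell\in\mathfrak{X}^*$, the real-valued function $t\mapsto\ell(g(t))$ is continuous and differentiable from the right with right derivative $\ell(0)=0$ on $[0,T)$. A standard one-sided mean value argument then forces $\ell\circ g$ to be constant on $[0,T]$: for each $\varepsilon>0$ the function $t\mapsto\ell(g(t))-\varepsilon t$ has negative right derivative everywhere and is therefore nonincreasing (proved by a supremum/connectedness argument on $[0,T]$), and letting $\varepsilon\to0^+$, together with the same reasoning applied to $-\ell\circ g$, shows $\ell(g(t))=\ell(g(0))=0$ for all $t$. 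Since $\ell\in\mathfrak{X}^*$ is arbitrary, $g\equiv0$, i.e.\ $\Phi(t)=\Phi(0)+F(t)$ for all $t\in[0,T]$. Consequently $\Phi\in C^1([0,T];\mathfrak{X})$ and $\Phi'=F'=\Psi$.

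The main obstacle is precisely this scalar lemma: a continuous function with identically zero right derivative need not be handled by the ordinary mean value theorem, since we have no two-sided differentiability a priori, so one must work with Dini derivatives (or the elementary $\varepsilon$-monotonicity estimate indicated above). Everything else—the fundamental theorem of calculus for continuous $\mathfrak{X}$-valued integrands and the Hahn--Banach reduction—is routine.
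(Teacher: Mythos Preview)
Your proof is correct. Note that the paper does not actually supply its own proof of this lemma; it simply refers the reader to Lemma~8.9 in \cite{schnaubelt}. Your argument---integrating $\Psi$ to obtain a genuine $C^1$ antiderivative $F$, forming $g=\Phi-\Phi(0)-F$, and then using a Hahn--Banach reduction together with the one-sided $\varepsilon$-monotonicity lemma to conclude that $g\equiv 0$---is a standard and complete route to the result, and your identification of the scalar Dini-derivative step as the only nontrivial ingredient is accurate.
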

We refer to Lemma 8.9 in \cite{schnaubelt} for the proof of Lemma \ref{time-derivative-continuity}.

\section{Wellposedness of the Linear Elliptic Equation}
\begin{lemma}\label{Chap5REG}
	Let $\Omega \subset \mathbb{R}^n$ be a bounded domain with smooth boundary, $n=1,\ 2$. Let $v\in H^{-1}(\Omega)$ and $w_0\in H^2(\Omega)$ with $\kappa=\displaystyle\inf_{x\in{\Omega}}w_0>0$. Then there exists a constant $C=C(\Omega)>0$ such that for all $w\in B_r(w_0)=\left\{\psi\in H^2(\Omega): \ \left\|\psi-w_0\right\|_{H^2(\Omega)}\leq r\right\}$, $r\in\left(0, \frac{\kappa}{2C}\right)$, the elliptic boundary value problem
	\be\label{Chap5BVP-ellipticfor4thorder}
	\nabla\cdot\left(w^3\nabla \tilde{u}\right)=v\ \text{in}\ \Omega,\quad \tilde{u}=0\ \text{on}\ \partial\Omega,
	\ee
	admits a unique weak solution $\tilde{u}\in H_0^1(\Omega)$, and $\tilde{u}$ satisfies
	\be\label{Chap5reg-u-H1}
	\left\|\tilde{u}\right\|_{H^1(\Omega)}\leq C_o\left\|v\right\|_{H^{-1}(\Omega)}.
	\ee
Here $C_o>0$ is a constant depending only on $\kappa$ and $\Omega$.
\end{lemma}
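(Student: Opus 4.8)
The plan is to treat \eqref{Chap5BVP-ellipticfor4thorder} as a uniformly elliptic equation in divergence form with bounded measurable coefficient $w^3$, and to invoke the Lax--Milgram theorem; the only non-routine point is the uniform two-sided bound on $w$ over the ball $B_r(w_0)$.

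First I would establish uniform ellipticity of the coefficient on all of $B_r(w_0)$. Since $n=1,2$, the Sobolev embedding $H^2(\Omega)\hookrightarrow C(\overline\Omega)$ holds; let $C=C(\Omega)$ denote its operator norm, which is exactly the constant in the hypothesis $r\in(0,\kappa/(2C))$. Then for $w\in B_r(w_0)$,
\[
\|w-w_0\|_{C(\overline\Omega)}\le C\|w-w_0\|_{H^2(\Omega)}\le Cr<\frac{\kappa}{2},
\]
so that $\tfrac{\kappa}{2}\le w(x)\le M:=\|w_0\|_{C(\overline\Omega)}+\tfrac{\kappa}{2}$ for every $x\in\overline\Omega$, whence $(\kappa/2)^3\le w^3(x)\le M^3$. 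In particular $w^3\in L^\infty(\Omega)$, which is all the coefficient regularity the $H^1$-theory needs. (This is the time-independent counterpart of the lower bound in Corollary \ref{estimates}.)

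Next I would pass to the weak formulation: $\tilde u\in H_0^1(\Omega)$ solves \eqref{Chap5BVP-ellipticfor4thorder} weakly iff
\[
B[\tilde u,\varphi]:=\int_\Omega w^3\,\nabla\tilde u\cdot\nabla\varphi\,dx=-\langle v,\varphi\rangle_{H^{-1},H_0^1}\qquad\text{for all }\varphi\in H_0^1(\Omega).
\]
The bilinear form $B$ is bounded on $H_0^1(\Omega)$, with $|B[\tilde u,\varphi]|\le M^3\|\tilde u\|_{H^1}\|\varphi\|_{H^1}$, and coercive: using the lower ellipticity bound together with the Poincar\'e inequality (constant depending only on $\Omega$),
\[
B[\tilde u,\tilde u]\ge\Bigl(\frac{\kappa}{2}\Bigr)^{3}\|\nabla\tilde u\|_{L^2(\Omega)}^2\ge c_0\,\|\tilde u\|_{H^1(\Omega)}^2,
\]
with $c_0=c_0(\kappa,\Omega)>0$. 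The right-hand side $\varphi\mapsto-\langle v,\varphi\rangle$ is a bounded linear functional on $H_0^1(\Omega)$ of norm at most $\|v\|_{H^{-1}(\Omega)}$. The Lax--Milgram theorem then yields a unique $\tilde u\in H_0^1(\Omega)$.

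Finally, the bound \eqref{Chap5reg-u-H1} follows by testing with $\varphi=\tilde u$: coercivity and the duality estimate give
\[
c_0\,\|\tilde u\|_{H^1(\Omega)}^2\le B[\tilde u,\tilde u]=-\langle v,\tilde u\rangle\le\|v\|_{H^{-1}(\Omega)}\|\tilde u\|_{H^1(\Omega)},
\]
so $\|\tilde u\|_{H^1(\Omega)}\le c_0^{-1}\|v\|_{H^{-1}(\Omega)}=:C_o\|v\|_{H^{-1}(\Omega)}$, and $C_o$ depends only on $\kappa$ (through the ellipticity constant) and on $\Omega$ (through Poincar\'e). I do not expect a serious obstacle; the only ingredient that is not entirely mechanical is the uniform two-sided bound on $w$ over the ball, which is precisely where the smallness restriction $r<\kappa/(2C)$ and the dimensional restriction $n\le2$ (for the embedding into $C(\overline\Omega)$) are used.
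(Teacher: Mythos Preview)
Your proof is correct and follows essentially the same route as the paper: Sobolev embedding $H^2\hookrightarrow C(\overline\Omega)$ to obtain the uniform lower bound $w\ge\kappa/2$ on $B_r(w_0)$, then boundedness and coercivity (via Poincar\'e) of the bilinear form $\int_\Omega w^3\nabla\tilde u\cdot\nabla\varphi$, followed by Lax--Milgram. Your derivation of the constant $C_o$ by testing with $\tilde u$ is in fact slightly more explicit than the paper's, which simply remarks that \eqref{Chap5reg-u-H1} follows from the coercivity and boundedness estimates.
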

\begin{proof}
	Since $w_0\in H^2(\Omega)$ and $w\in B_r(w_0)=\left\{\psi\in H^2(\Omega): \ \left\|\psi-w_0\right\|_{H^2(\Omega)}\leq r\right\}$, according to the triangle inequality and the Sobolev embedding theorem, there exists a constant $C=C(\Omega)>0$, such that for all $r\in\left(0, \frac{\kappa}{2C}\right)$, it follows that
	\bse\label{Chap5w-lower-bound3}
	\be\label{Chap5w-lower-bound1-1}
	w=w_0+w-w_0\geq\kappa-\left\|w-w_0\right\|_{L^{\infty}(\Omega)}\geq\kappa-C\left\|w-w_0\right\|_{H^{2}(\Omega)}\geq\kappa-Cr\geq\frac{\kappa}{2},
	\ee
	\be\label{Chap5w-upper-bound1-2}
	\left\|w\right\|_{H^2(\Omega)}\leq\left\|w_0\right\|_{H^2(\Omega)}+\frac{\kappa}{2C}.
	\ee
	\ese
	For all functions that vanish on $\partial\Omega$, from the Poincar\'{e} inequality, we obtain
	\be\label{Chap5P-inequ}
	\left\|\tilde{u}\right\|_{L^2(\Omega)}\leq C\left\|\nabla\tilde{u}\right\|_{L^2(\Omega)}.
	\ee
	It is easy to see that for all $\tilde{u}_1,\ \tilde{u}_2\in H_0^1(\Omega)$
	\begin{align}
		\int_{\Omega}w^3\nabla\tilde{u}_1\nabla\tilde{u}_2dx\leq&\left\|w^3\right\|_{L^\infty(\Omega)}\left\|\nabla\tilde{u}_1\right\|_{L^2(\Omega)}\left\|\nabla\tilde{u}_2\right\|_{L^2(\Omega)}\notag\\
		\leq&\left(\left\|w_0\right\|_{H^2(\Omega)}+\frac{\kappa}{2C}\right)^3\left\|\tilde{u}_1\right\|_{H^1(\Omega)}\left\|\tilde{u}_2\right\|_{H^1(\Omega)}, \label{after23_1}
	\end{align}
	\begin{align}
		\int_{\Omega}w^3\left|\nabla\tilde{u}_1\right|^2dx\geq\frac{\kappa^3}{8}\left\|\nabla\tilde{u}_1\right\|^2_{L^2(\Omega)}\geq\frac{\kappa^3C'}{16}\left\|\tilde{u}_1\right\|^2_{H^1(\Omega)},\label{after23_2}
	\end{align}
	where $C'=\min\left\{1,\ C^{-2}\right\}$. From the Lax-Milgram Theorem (Theorem 1, Section 6.2, \cite{EL}),  the problem \eqref{Chap5BVP-ellipticfor4thorder} has a unique weak solution $\tilde{u}\in H_0^1(\Omega)$, and from \eqref{after23_1}, \eqref{after23_2} this weak solution satisfies \eqref{Chap5reg-u-H1}. 
\end{proof}

\begin{remark}\label{Chap5rem0}
	Define the solution operator $\mathbf{S}_{o}$ by
	\be\label{Chap5solutionop}
	\mathbf{S}_o:\ H^{-1}(\Omega)\times B_r(w_0)\longrightarrow H_0^1(\Omega),\quad (v, w)\longmapsto \tilde{u},\quad  \tilde{u}=\mathbf{S}_o(v, w),
	\ee
	where $\tilde{u}$ denotes the solution of \eqref{Chap5BVP-ellipticfor4thorder}. Then  $\mathbf{S}_o$ is linear with respect to $v$,
	\[\mathbf{S}_o(v_1-v_2, w)=\mathbf{S}_o(v_1, w)-\mathbf{S}_o(v_2, w),\quad v_1,\ v_2\in H^{-1}(\Omega), \ w\in B_r(w_0),\]
	and it satisfies
	\be\label{Chap5Lip-con-sol1}
	\left\|\mathbf{S}_{o}(v_1, w)-\mathbf{S}_{o}(v_2, w)\right\|_{H^1(\Omega)}\leq C_o\left\|v_1-v_2\right\|_{H^{-1}(\Omega)}.
	\ee
	In fact, $\tilde{u}_1=\mathbf{S}_o(v_1, w)$ and $\tilde{u}_2=\mathbf{S}_o(v_2, w)$ denote the solutions of the equations
	\[\nabla\cdot\left(w^3\nabla \tilde{u}_1\right)=v_1\ \text{in}\ \Omega,\quad \tilde{u}_1=0\ \text{on}\ \partial\Omega,\quad \nabla\cdot\left(w^3\nabla \tilde{u}_2\right)=v_2\ \text{in}\ \Omega,\quad \tilde{u}_2=0\ \text{on}\ \partial\Omega,\]
	respectively for each  $w\in B_r(w_0)$ and $v_1$, $v_2\in H^{-1}(\Omega)$. Then $\tilde{u}_1-\tilde{u}_2$ is the solution of
	\[\nabla\cdot\left(w^3\nabla\left[\tilde{u}_1-\tilde{u}_2\right]\right)=v_1-v_2\ \text{in}\ \Omega,\quad \tilde{u}_1-\tilde{u}_2=0\ \text{on}\ \partial\Omega,\]
	for every  $w\in B_r(w_0)$ and $v_1-v_2\in L^2(\Omega)$, and $\tilde{u}_1-\tilde{u}_2=\mathbf{S}_o(v_1-v_2, w)$. By using the estimate \eqref{Chap5reg-u-H1} from Lemma \ref{Chap5REG}, we conclude the assertion \eqref{Chap5Lip-con-sol1}.
	
\end{remark}

\begin{lemma}\label{Chap5LIP-Elliptic}
	$\mathbf{S}_{o}$ satisfies the following Lipschitz continuity estimates:
	\be\label{Chap5Lip-con-sol2}
	\left\|\mathbf{S}_{o}(v, w_1)-\mathbf{S}_{o}(v, w_2)\right\|_{H^1(\Omega)}\leq C_o^*\left\|v\right\|_{H^{-1}(\Omega)}\left\|w_1-w_2\right\|_{H^2(\Omega)}
	\ee
	Here, $C_o^*$ is a constant depending on $C_o$ from Lemma \ref{Chap5REG}, on $\Omega$, $\kappa$ and $\left\|w_0\right\|_{H^2(\Omega)}$.
\end{lemma}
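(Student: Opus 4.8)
The plan is to compare the two elliptic problems solved by $\tilde{u}_1=\mathbf{S}_o(v,w_1)$ and $\tilde{u}_2=\mathbf{S}_o(v,w_2)$, namely $\nabla\cdot(w_1^3\nabla\tilde{u}_1)=v$ and $\nabla\cdot(w_2^3\nabla\tilde{u}_2)=v$, both with homogeneous Dirichlet data. Subtracting, the difference $z:=\tilde{u}_1-\tilde{u}_2\in H_0^1(\Omega)$ satisfies, in the weak sense,
\be
\nabla\cdot\left(w_1^3\nabla z\right)=\nabla\cdot\left(\left(w_2^3-w_1^3\right)\nabla\tilde{u}_2\right)\quad\text{in }\Omega,\qquad z=0\ \text{on }\partial\Omega.
\ee
This is again an elliptic problem of the type treated in Lemma \ref{Chap5REG}, with the same coefficient $w_1^3$ (so the same coercivity constant applies) but with right-hand side $v^*:=\nabla\cdot((w_2^3-w_1^3)\nabla\tilde{u}_2)\in H^{-1}(\Omega)$. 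Hence by \eqref{Chap5reg-u-H1},
\[
\|z\|_{H^1(\Omega)}\leq C_o\left\|v^*\right\|_{H^{-1}(\Omega)}=C_o\left\|\nabla\cdot\left(\left(w_2^3-w_1^3\right)\nabla\tilde{u}_2\right)\right\|_{H^{-1}(\Omega)}\leq C_o\left\|\left(w_2^3-w_1^3\right)\nabla\tilde{u}_2\right\|_{L^2(\Omega)},
\]
using that $\nabla\cdot:L^2(\Omega;\mathbb{R}^n)\to H^{-1}(\Omega)$ is bounded with norm $\leq 1$.

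Next I would estimate $\|(w_2^3-w_1^3)\nabla\tilde{u}_2\|_{L^2(\Omega)}\leq\|w_2^3-w_1^3\|_{L^\infty(\Omega)}\|\nabla\tilde{u}_2\|_{L^2(\Omega)}$. For the first factor, factor $w_2^3-w_1^3=(w_2-w_1)(w_1^2+w_1w_2+w_2^2)$; using the Sobolev embedding $H^2(\Omega)\hookrightarrow L^\infty(\Omega)$ (valid for $n=1,2$) together with the uniform bound $\|w_i\|_{H^2(\Omega)}\leq\|w_0\|_{H^2(\Omega)}+\kappa/(2C)$ from \eqref{Chap5w-upper-bound1-2}, one gets
\[
\left\|w_2^3-w_1^3\right\|_{L^\infty(\Omega)}\leq C\left\|w_1-w_2\right\|_{H^2(\Omega)}\left(\|w_1\|_{H^2(\Omega)}^2+\|w_2\|_{H^2(\Omega)}^2\right)\leq C\left(\Omega,\kappa,\|w_0\|_{H^2(\Omega)}\right)\left\|w_1-w_2\right\|_{H^2(\Omega)}.
\]
For the second factor, apply Lemma \ref{Chap5REG} to $\tilde{u}_2$ itself: $\|\nabla\tilde{u}_2\|_{L^2(\Omega)}\leq\|\tilde{u}_2\|_{H^1(\Omega)}\leq C_o\|v\|_{H^{-1}(\Omega)}$. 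Combining these three estimates yields $\|\tilde{u}_1-\tilde{u}_2\|_{H^1(\Omega)}\leq C_o^*\|v\|_{H^{-1}(\Omega)}\|w_1-w_2\|_{H^2(\Omega)}$ with $C_o^*$ depending only on $C_o$, $\Omega$, $\kappa$ and $\|w_0\|_{H^2(\Omega)}$, which is the claim.

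The only genuinely delicate point is the bound $\|v^*\|_{H^{-1}(\Omega)}\leq\|(w_2^3-w_1^3)\nabla\tilde{u}_2\|_{L^2(\Omega)}$: one must be careful that $v^*$ is interpreted distributionally (test against $\varphi\in H_0^1(\Omega)$ and integrate by parts, $\langle v^*,\varphi\rangle=-\int_\Omega(w_2^3-w_1^3)\nabla\tilde{u}_2\cdot\nabla\varphi\,dx$, then Cauchy–Schwarz), and that Lemma \ref{Chap5REG} is genuinely applicable to $z$ — it is, since $z\in H_0^1(\Omega)$ and the coefficient $w_1^3$ is the same one appearing there, so the coercivity constant $C_o$ is unchanged. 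Everything else is a routine product estimate using the algebra/embedding properties recalled in Section \ref{Sec:prerequisite} and the uniform bounds already established in the proof of Lemma \ref{Chap5REG}.
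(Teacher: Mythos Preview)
Your proof is correct and follows essentially the same approach as the paper: both derive the difference equation $\nabla\cdot(w_1^3\nabla z)=\nabla\cdot((w_2^3-w_1^3)\nabla\tilde{u}_2)$, use the coercivity from $w_1\geq\kappa/2$, bound $\|w_2^3-w_1^3\|_{L^\infty}$ via the Sobolev embedding and the $H^2$-algebra property, and control $\|\nabla\tilde{u}_2\|_{L^2}$ by Lemma~\ref{Chap5REG}. The only cosmetic difference is that you invoke Lemma~\ref{Chap5REG} as a black box for the $H^1$-estimate on $z$, whereas the paper writes out the energy estimate (multiply by $z$, integrate by parts) explicitly.
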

\begin{proof}
	Let $\tilde{u}_1^*=\mathbf{S}_o(v, w_1)$, $\tilde{u}_2^*=\mathbf{S}_o(v, w_2)$ denote the solutions of the equations
	\be\label{Chap5equ1}
	\nabla\cdot\left(w_1^3\nabla \tilde{u}^*_1\right)=v\ \text{in}\ \Omega,\quad \tilde{u}^*_1=0\ \text{on}\ \partial\Omega,
	\ee
	\be\label{Chap5equ2}
	\nabla\cdot\left(w_2^3\nabla \tilde{u}^*_2\right)=v\ \text{in}\ \Omega,\quad \tilde{u}^*_2=0\ \text{on}\ \partial\Omega,
	\ee
	respectively, for given $w_1,\ w_2\in B_r(w_0)$ and $v\in H^{-1}(\Omega)$. Then $\tilde{u}_1^*-\tilde{u}_2^*$ satisfies
	\be\label{Chap5e01}
	\nabla\cdot\left[w_1^3\nabla\left(\tilde{u}^*_1-\tilde{u}^*_2\right)\right]=\nabla\cdot\left[\left(w_2^3-w_1^3\right)\nabla \tilde{u}^*_2\right].
	\ee
	Multiply \eqref{Chap5e01} by $\tilde{u}^*_1-\tilde{u}^*_2$, and integrate over $\Omega$, integrating by parts:
	\be\label{Chap5e02}
	\int_\Omega w_1^3\left|\nabla\left(\tilde{u}^*_1-\tilde{u}^*_2\right)\right|^2dx=\int_{\Omega}\nabla\left(\tilde{u}^*_1-\tilde{u}^*_2\right)\cdot\left[\left(w_2^3-w_1^3\right)\nabla \tilde{u}^*_2\right]dx.
	\ee
	Because $\tilde{u}_2^*$ is a solution of equation \eqref{Chap5equ2}, Lemma \ref{Chap5REG} implies
	\be\label{Chap5e03}
	\left\|\nabla\tilde{u}^*_2\right\|_{L^2(\Omega)}\leq C_o\left\|v\right\|_{H^{-1}(\Omega)}.
	\ee
	From  the algebraic property of $H^2(\Omega)$ and triangle inequality we obtain
	\begin{align}
		\left\|w_2^3-w_1^3\right\|_{H^2(\Omega)}\leq&\left[\left\|w_1\right\|^2_{H^2(\Omega)}+\left\|w_2\right\|^2_{H^2(\Omega)}+\left\|w_1\right\|_{H^2(\Omega)}\left\|w_2\right\|_{H^2(\Omega)}\right]\left\|w_1-w_2\right\|_{H^2(\Omega)}\notag\\
		\leq&3\left(\left\|w_0\right\|_{H^2(\Omega)}+\frac{\kappa}{2C}\right)^2\left\|w_1-w_2\right\|_{H^2(\Omega)}\label{Chap5e04}
	\end{align}
	Using the Sobolev embedding Theorem, the Schwarz inequality, \eqref{Chap5e03} and \eqref{Chap5e04} on the right hand side of \eqref{Chap5e02}, we get
	\begin{align}
		\left|\int_{\Omega}\nabla\left(\tilde{u}^*_1-\tilde{u}^*_2\right)\cdot\left[\left(w_2^3-w_1^3\right)\nabla \tilde{u}^*_2\right]dx\right|\leq&\left\|w_2^3-w_1^3\right\|_{L^\infty(\Omega)}\int_{\Omega}\left|\nabla\left(\tilde{u}^*_1-\tilde{u}^*_2\right)\cdot\nabla \tilde{u}^*_2\right|dx\notag\\
		\leq&C\left\|w_2^3-w_1^3\right\|_{H^2(\Omega)}\left\|\nabla\left(\tilde{u}^*_1-\tilde{u}^*_2\right)\right\|_{L^2(\Omega)}\left\|\nabla \tilde{u}^*_2\right\|_{L^2(\Omega)}\notag\\
		\leq&3CC_o\left(\left\|w_0\right\|_{H^2(\Omega)}+\frac{\kappa}{2C}\right)^2\left\|v\right\|_{H^{-1}(\Omega)}\notag\\
		\cdot&\left\|w_1-w_2\right\|_{H^2(\Omega)}\left\|\nabla\left(\tilde{u}^*_1-\tilde{u}^*_2\right)\right\|_{L^2(\Omega)}\label{Chap5e05}.
	\end{align}
	Because $w_1\in B_r(w_0)$, from estimate \eqref{Chap5w-lower-bound1-1} in the proof of Lemma \ref{Chap5REG}, we have
	\[w_1^3\geq\frac{\kappa^3}{8}.\]
	Hence, the left hand side of \eqref{Chap5e02} satisfies
	\be\label{Chap5e06}
	\int_\Omega w_1^3\left|\nabla\left(\tilde{u}^*_1-\tilde{u}^*_2\right)\right|^2dx\geq\frac{\kappa^3}{8}\left\|\nabla\left(\tilde{u}^*_1-\tilde{u}^*_2\right)\right\|_{L^2(\Omega)}^2.
	\ee
	Therefore, \eqref{Chap5e05} and \eqref{Chap5e06} imply \eqref{Chap5Lip-con-sol2} holds by setting
	\[C_o^*=\frac{24CC_o}{\kappa^3}\left(\left\|w_0\right\|_{H^2(\Omega)}+\frac{\kappa}{2C}\right)^2\sqrt{C^2+1}.\]
\end{proof}

\begin{remark}
	Because of the continuity of the solution operator, we obtain corresponding results in spaces of continuous functions $C\left([0, T]; X\right)$, for appropriate $X$. 
	For example, Lemma \ref{Chap5REG} becomes:\\
	For each $v\in C\left([0, T]; H^{-1}(\Omega)\right)$ and $w\in C\left([0, T]; B_r(w_0)\right)$, there exists a unique solution $\tilde{u}\in C\left([0, T]; H_0^1(\Omega)\right)$ of the linear elliptic equation \eqref{Chap5BVP-ellipticfor4thorder} such that
	\be\label{Chap5time-reg-u-H2}
	\sup_{t\in[0, T]}\left\|\tilde{u}(t)\right\|_{H^1(\Omega)}\leq C_o\sup_{t\in[0, T]}\left\|v(t)\right\|_{H^{-1}(\Omega)}.
	\ee
	
	The solution operator $\mathbf{S}_o$ given by \eqref{Chap5solutionop} becomes
	\bse\label{Chap5solutionop-time}
	\be
	\mathbf{S}_o:\ C\left([0, T]; H^{-1}(\Omega)\times B_r(w_0)\right)\longrightarrow C\left([0, T]; H_0^1(\Omega)\right),\ (v, w)\longmapsto \tilde{u},\  \tilde{u}=\mathbf{S}_o(v, w), 
	\ee
	\be
	\tilde{u}(t)=[\mathbf{S}_o(v, w)](t)=\mathbf{S}_{o}(v(t), w(t)),
	\ee
	\ese
	and $\mathbf{S}_o$ has the similar Lipschitz continuity properties such as
	\begin{align}
		\sup_{t\in[0, T]}\left\|[\mathbf{S}_{o}(v_1, w)](t)-[\mathbf{S}_{o}(v_2, w)](t)\right\|_{H^1(\Omega)}\leq C_o&\sup_{t\in[0, T]}\left\|v_1(t)-v_2(t)\right\|_{H^{-1}(\Omega)}\label{Chap5times-Lip-con-sol1},
	\end{align}
	\begin{align}
		&\sup_{t\in[0, T]}\left\|[\mathbf{S}_{o}(v, w_1)](t)-[\mathbf{S}_{o}(v, w_2)](t)\right\|_{H^1(\Omega)}\notag\\
		\leq&C_o^*\sup_{t\in[0, T]}\left\|v(t)\right\|_{H^{-1}(\Omega)}\sup_{t\in[0, T]}\left\|w_1(t)-w_2(t)\right\|_{H^2(\Omega)}.\label{Chap5time-Lip-con-sol2}
	\end{align}
	Hence
	\begin{align}
		&\sup_{t\in[0, T]}\left\|[\mathbf{S}_{o}(v_1, w_1)](t)-[\mathbf{S}_{o}(v_2, w_2)](t)\right\|_{H^1(\Omega)}\notag\\
		\leq&C_o^*\sup_{t\in[0, T]}\left\|v_2(t)\right\|_{H^{-1}(\Omega)}\sup_{t\in[0, T]}\left\|w_1(t)-w_2(t)\right\|_{H^2(\Omega)}+C_o\sup_{t\in[0, T]}\left\|v_1(t)-v_2(t)\right\|_{H^{-1}(\Omega)}.\label{Chap5time-Lip-con-sol3}
	\end{align}
	Choose $h\in(0, T)$ sufficiently small such that $t+h\in[0, T]$ and $w(t)$, $w(t+h)\in B_r(w_0)$, similarly, we obtain
	\begin{align}
		&\sup_{t\in[0, T]}\left\|\mathbf{S}_{o}(v(t+h), w(t+h))-\mathbf{S}_{o}(v(t), w(t))\right\|_{H^1(\Omega)}\notag\\
		\leq&C_o^*\sup_{t\in[0, T]}\left\|v(t)\right\|_{H^{-1}(\Omega)}\sup_{t\in[0, T]}\left\|w(t+h)-w(t)\right\|_{H^2(\Omega)}+C_o\sup_{t\in[0, T]}\left\|v(t+h)-v(t)\right\|_{H^{-1}(\Omega)}.\label{Chap5time-Lip-con-sol4}
	\end{align}
	Estimates \eqref{Chap5times-Lip-con-sol1} and \eqref{Chap5time-Lip-con-sol2} imply  the Fr\'{e}chet derivative $D_v\mathbf{S}_o(v(t), w(t))$ of  $\mathbf{S}_o(v(t), w(t))$ on $v(t)$ and the Fr\'{e}chet derivative $D_w\mathbf{S}_o(v(t), w(t))$ of  $\mathbf{S}_o(v(t), w(t))$ on $w(t)$ exist and
	\bse\label{Chap5Fre-Sol-v}
	\be\label{Chap5Fre-Sol-v1}
	D_v\mathbf{S}_o(v(t), w(t)): H^{-1}(\Omega)\longrightarrow H_0^1(\Omega),\quad \varphi\longmapsto  [D_v\mathbf{S}_o(v(t), w(t))]\varphi,
	\ee
	\be\label{Chap5Fre-Sol-v2}
	[D_v\mathbf{S}_o(v(t), w(t))]\varphi=\lim_{\lambda\rightarrow 0}\frac{1}{\lambda}\left[\mathbf{S}_o(v(t)+\lambda\varphi, w(t))-\mathbf{S}_o(v(t), w(t))\right],
	\ee
	\ese
	\bse\label{Chap5Fre-Sol-w}
	\be\label{Chap5Fre-Sol-w1}
	D_w\mathbf{S}_o(v(t), w(t)): H^{2}(\Omega)\longrightarrow H_0^1(\Omega),\quad \psi\longmapsto  [D_w\mathbf{S}_o(v(t), w(t))]\psi,
	\ee
	\be\label{Chap5Fre-Sol-w2}
	[D_w\mathbf{S}_o(v(t), w(t))]\psi=\lim_{\lambda\rightarrow 0}\frac{1}{\lambda}\left[\mathbf{S}_o(v(t), w(t)+\lambda\psi)-\mathbf{S}_o(v(t), w(t))\right].
	\ee
	\ese
	According to inequalities \eqref{Chap5times-Lip-con-sol1}, \eqref{Chap5time-Lip-con-sol2}, it is easy to obtain that functions $D_v\mathbf{S}_o(v(t), w(t))\phi$ and \\$D_v\mathbf{S}_o(v(t), w(t))\psi$ are uniformly continuous with respect to $\phi$ and $\psi$ respectively.
\end{remark}

\begin{lemma}\label{Chap5Fre-Lip-continuity}
	Let $v\in C\left([0, T]; H^{-1}(\Omega)\right)$, $w\in C\left([0, T]; B_r(w_0)\right)$. Then
	\be\label{Chap5Fre-Lip-continuity-v}
	\sup_{t\in[0, T]}\left\|[D_v\mathbf{S}_o\left(v(t), w(t)\right)]\varphi_1-[D_v\mathbf{S}_o(v(t), w(t))]\varphi_2\right\|_{H^1(\Omega)}\leq C_o\left\|\varphi_1-\varphi_2\right\|_{H^{-1}(\Omega)}
	\ee
	holds for all $\varphi_1,\ \varphi_2\in H^{-1}(\Omega)$.
	\begin{align}
		&\sup_{t\in[0, T]}\left\|[D_w\mathbf{S}_o\left(v(t), w(t)\right)]\psi_1-[D_w\mathbf{S}_o(v(t), w(t))]\psi_2\right\|_{H^1(\Omega)}\notag\\
		\leq&C_o^*\sup_{t\in[0, T]}\left\|v(t)\right\|_{H^{-1}(\Omega)}\left\|\psi_1-\psi_2\right\|_{H^{2}(\Omega)}\label{Chap5Fre-Lip-continuity-w}
	\end{align}
	holds for all $\psi_1,\ \psi_2\in H^{2}(\Omega)$.
\end{lemma}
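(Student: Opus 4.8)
The plan is to handle the two estimates separately, reducing each to elliptic bounds already in hand. The estimate \eqref{Chap5Fre-Lip-continuity-v} is almost immediate from the linearity of $\mathbf{S}_o$ in its first argument (Remark \ref{Chap5rem0}): since $v\mapsto\mathbf{S}_o(v,w(t))$ is linear, the definition \eqref{Chap5Fre-Sol-v2} gives $[D_v\mathbf{S}_o(v(t),w(t))]\varphi=\mathbf{S}_o(\varphi,w(t))$, independent of $v(t)$, so $[D_v\mathbf{S}_o(v(t),w(t))]\varphi_1-[D_v\mathbf{S}_o(v(t),w(t))]\varphi_2=\mathbf{S}_o(\varphi_1-\varphi_2,w(t))$, and \eqref{Chap5Lip-con-sol1} (equivalently \eqref{Chap5reg-u-H1}) bounds its $H^1(\Omega)$-norm by $C_o\|\varphi_1-\varphi_2\|_{H^{-1}(\Omega)}$, uniformly in $t$. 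Taking the supremum over $t\in[0,T]$ gives \eqref{Chap5Fre-Lip-continuity-v}.

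For \eqref{Chap5Fre-Lip-continuity-w} I would first identify $D_w\mathbf{S}_o(v(t),w(t))$ explicitly. Writing $\tilde u(t)=\mathbf{S}_o(v(t),w(t))$ and differentiating at $\lambda=0$ the weak form of $\nabla\cdot\big((w(t)+\lambda\psi)^3\nabla\mathbf{S}_o(v(t),w(t)+\lambda\psi)\big)=v(t)$, one expects $q(t):=[D_w\mathbf{S}_o(v(t),w(t))]\psi$ to be the unique weak solution in $H_0^1(\Omega)$ of
\[
\nabla\cdot\big(w(t)^3\nabla q(t)\big)=-\nabla\cdot\big(3\,w(t)^2\,\psi\,\nabla\tilde u(t)\big)\ \text{in}\ \Omega,\qquad q(t)=0\ \text{on}\ \partial\Omega .
\]
Existence and uniqueness of this weak solution follow from the Lax--Milgram theorem exactly as in Lemma \ref{Chap5REG}, since $3w(t)^2\psi\,\nabla\tilde u(t)\in L^2(\Omega)$ by the algebra property of $H^2(\Omega)$ and the embedding $H^2(\Omega)\hookrightarrow L^\infty(\Omega)$ (Lemma \ref{alg}); the convergence of the difference quotients $\lambda^{-1}[\mathbf{S}_o(v(t),w(t)+\lambda\psi)-\tilde u(t)]$ to $q(t)$ in $H^1(\Omega)$ is then established by an energy estimate of the same type as below. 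Alternatively, one may bypass this identification and simply pass to the limit $\lambda\to0$ in \eqref{Chap5time-Lip-con-sol2}, which is legitimate once $|\lambda|$ is small enough that $w(t)+\lambda\psi$ still satisfies $w(t)+\lambda\psi\geq\kappa/2$ and the $H^2$-bound \eqref{Chap5w-upper-bound1-2}; this already yields $\|[D_w\mathbf{S}_o(v(t),w(t))]\psi\|_{H^1(\Omega)}\leq C_o^*\|v(t)\|_{H^{-1}(\Omega)}\|\psi\|_{H^2(\Omega)}$.

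Since $D_w\mathbf{S}_o(v(t),w(t))$ is a bounded linear operator, $[D_w\mathbf{S}_o(v(t),w(t))]\psi_1-[D_w\mathbf{S}_o(v(t),w(t))]\psi_2$ solves the displayed elliptic problem with $\psi$ replaced by $\psi_1-\psi_2$, and I would estimate it exactly as in the proof of Lemma \ref{Chap5LIP-Elliptic}: test the equation against the solution itself, integrate by parts, bound the left-hand side below by $\tfrac{\kappa^3}{8}\|\nabla(\cdot)\|_{L^2(\Omega)}^2$ using $w(t)^3\geq\kappa^3/8$ from \eqref{Chap5w-lower-bound1-1}, and bound the right-hand side above using the Cauchy--Schwarz inequality together with $\|3w(t)^2\|_{L^\infty(\Omega)}\leq C\|w(t)\|_{H^2(\Omega)}^2\leq C\big(\|w_0\|_{H^2(\Omega)}+\tfrac{\kappa}{2C}\big)^2$ and $\|\nabla\tilde u(t)\|_{L^2(\Omega)}\leq C_o\|v(t)\|_{H^{-1}(\Omega)}$ from \eqref{Chap5reg-u-H1}. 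Dividing through, invoking the Poincar\'e inequality \eqref{Chap5P-inequ} to pass from $\|\nabla(\cdot)\|_{L^2(\Omega)}$ to $\|\cdot\|_{H^1(\Omega)}$, and taking the supremum over $t\in[0,T]$ gives \eqref{Chap5Fre-Lip-continuity-w} with $C_o^*$ as in Lemma \ref{Chap5LIP-Elliptic}. The only point requiring genuine care is the identification of $D_w\mathbf{S}_o$ with the solution operator of this linearized problem; once that is granted, the remaining steps merely repeat the energy estimate of Lemma \ref{Chap5LIP-Elliptic}.
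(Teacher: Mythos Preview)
Your proposal is correct. In fact, the paper's own proof is precisely your ``alternative'' route: it works directly from the definitions \eqref{Chap5Fre-Sol-v2}, \eqref{Chap5Fre-Sol-w2} of the derivatives as limits of difference quotients and applies the Lipschitz estimates \eqref{Chap5times-Lip-con-sol1} and \eqref{Chap5time-Lip-con-sol2} before letting $\lambda\to 0$, obtaining both \eqref{Chap5Fre-Lip-continuity-v} and \eqref{Chap5Fre-Lip-continuity-w} in two lines each without ever identifying $D_w\mathbf{S}_o$ as the solution operator of a linearized elliptic problem.

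Your explicit identification of $D_w\mathbf{S}_o(v(t),w(t))\psi$ as the weak solution of $\nabla\cdot(w(t)^3\nabla q)=-\nabla\cdot(3w(t)^2\psi\,\nabla\tilde u(t))$ is more informative and, as you note, is the step that actually justifies the existence of the Fr\'echet (not merely Gateaux) derivative in the $w$-variable---a point the paper asserts somewhat casually in the remark preceding the lemma. For the bare purpose of proving this lemma, however, that identification and the subsequent energy estimate reproduce exactly the constant $C_o^*$ of Lemma \ref{Chap5LIP-Elliptic} and so gain nothing over the two-line limit argument; the paper's approach is shorter, while yours is more self-contained and does not rely on the derivative having been granted in advance.
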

\begin{proof}
	From the definitions \eqref{Chap5Fre-Sol-v}, \eqref{Chap5Fre-Sol-w} of $D_v\mathbf{S}_o\left(v(t), w(t)\right)$ and $D_w\mathbf{S}_o\left(v(t), w(t)\right)$ and estimates \eqref{Chap5times-Lip-con-sol1}, \eqref{Chap5time-Lip-con-sol2}, it is easy to see that
	\begin{align}
		&\sup_{t\in[0, T]}\left\|D_v\mathbf{S}_o\left(v(t), w(t)\right)\varphi_1-D_v\mathbf{S}_o(v(t), w(t))\varphi_2\right\|_{H^1(\Omega)}\notag\\
		=&\lim_{\lambda\rightarrow 0}\frac{1}{\lambda}\sup_{t\in[0, T]}\left\|\mathbf{S}_o(v(t)+\lambda\varphi_1, w(t))-\mathbf{S}_o(v(t)+\lambda\varphi_2, w(t))\right\|_{H^1(\Omega)}\notag\\
		\leq&\lim_{\lambda\rightarrow 0}\frac{1}{\lambda}C_o\left\|\lambda\left(\varphi_1-\varphi_2\right)\right\|_{H^{-1}(\Omega)}\notag\\
		=&C_o\left\|\varphi_1-\varphi_2\right\|_{H^{-1}(\Omega)}\notag,
	\end{align}
	\begin{align}
		&\sup_{t\in[0, T]}\left\|D_w\mathbf{S}_o\left(v(t), w(t)\right)\psi_1-D_w\mathbf{S}_o(v(t), w(t))\psi_2\right\|_{H^1(\Omega)}\notag\\
		=&\lim_{\lambda\rightarrow 0}\frac{1}{\lambda}\sup_{t\in[0, T]}\left\|\mathbf{S}_o(v(t), w(t)+\lambda\psi_1)-\mathbf{S}_o(v(t), w(t)+\lambda\psi_2)\right\|_{H^1(\Omega)}\notag\\
		\leq&\lim_{\lambda\rightarrow 0}\frac{1}{\lambda}C_o^*\sup_{t\in[0, T]}\left\|v(t)\right\|_{H^{-1}(\Omega)}\left\|\lambda\left(\psi_1-\psi_2\right)\right\|_{H^{-1}(\Omega)}\notag\\
		=&C_o^*\sup_{t\in[0, T]}\left\|v(t)\right\|_{H^{-1}(\Omega)}\left\|\psi_1-\psi_2\right\|_{H^{-1}(\Omega)}\notag.
	\end{align}
\end{proof}

\begin{remark}
	Lemma \ref{Chap5Fre-Lip-continuity} implies
	\bse\label{Chap5Bound-of-Fre-D}
	\be\label{Chap5Bound-of-Fre-D1}
	\sup_{t\in[0, T]}\left\|D_v\mathbf{S}_o\left(v(t), w(t)\right)\right\|_{\mathcal{B}\left(H^{-1}(\Omega),\ H_0^1(\Omega)\right)}\leq C_o,
	\ee
	\be\label{Chap5Bound-of-Fre-D2}
	\sup_{t\in[0, T]}\left\|D_w\mathbf{S}_o\left(v(t), w(t)\right)\right\|_{\mathcal{B}\left(H^2(\Omega),\ H_0^1(\Omega)\right)}\leq C_o^*\sup_{t\in[0, T]}\left\|v(t)\right\|_{H^{-1}(\Omega)}.
	\ee
	\ese
	Now we are going to show that $D_v\mathbf{S}_o(v(t), w(t))$ and $D_v\mathbf{S}_o(v(t), w(t))$ are uniformly continuous on the compact sets \[\left\{v(t)+\tau\left[v(t+h)-v(t)\right]\in H^{-1}(\Omega):\ \tau\in[0, 1],\ t, t+h\in[0, T]\right\},\]
	\[\left\{w(t)+\tau\left[w(t+h)-w(t)\right]\in B_r(w_0):\ \tau\in[0, 1],\ t, t+h\in[0, T]\right\}\]
	respectively, which is Lemma \ref{Chap5Fre-uniformly-continuity}.
\end{remark}

\begin{lemma}\label{Chap5Fre-uniformly-continuity}
	Let $v\in C\left([0, T]; H^{-1}(\Omega)\right)$, $w\in C\left([0, T]; B_r(w_0)\right)$ and choose $h\in(0, T)$ sufficiently small such that $w(t+h)\in B_r(w_0)$ for all $t\in[0, T]$. Denote $v_h(t)=v(t+h)-v(t)$ and $w_h(t)=w(t+h)-w(t)$. Then
	\begin{align}
		&\sup_{\begin{smallmatrix} t, t+h\in[0, T]\\ \tau\in[0, 1]\end{smallmatrix}}\left\|D_v\mathbf{S}_o\left(v(t)+\tau v_h(t), w(t)+\tau w_h(t)\right)-D_v\mathbf{S}_o(v(t), w(t))\right\|_{\mathcal{B}\left(H^{-1}(\Omega),\ H_0^1(\Omega)\right)}\notag\\
		&:=\alpha_1(h)\rightarrow 0,\ \text{as}\ h\rightarrow 0\label{Chap5Fre-uniformly-continuity-v},
	\end{align}
	\begin{align}
		&\sup_{\begin{smallmatrix} t, t+h\in[0, T]\\ \tau\in[0, 1]\end{smallmatrix}}\left\|D_w\mathbf{S}_o\left(v(t)+\tau v_h(t), w(t)+\tau w_h(t)\right)-D_w\mathbf{S}_o(v(t), w(t))\right\|_{\mathcal{B}\left(H^{2}(\Omega),\ H_0^1(\Omega)\right)}\notag\\
		&:=\alpha_2(h)\rightarrow 0,\ \text{as}\ h\rightarrow 0\label{Chap5Fre-uniformly-continuity-w}.
	\end{align}
\end{lemma}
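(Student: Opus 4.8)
The plan is to prove the two statements separately, exploiting that $\mathbf{S}_o$ is \emph{linear} in its first argument but genuinely nonlinear in the second, and to reduce both to the Lipschitz estimates \eqref{Chap5Lip-con-sol1}, \eqref{Chap5Lip-con-sol2} together with the uniform continuity of $v$ and $w$ on the compact interval $[0,T]$. For \eqref{Chap5Fre-uniformly-continuity-v}: by linearity of $\varphi\mapsto\mathbf{S}_o(\varphi,w)$, formula \eqref{Chap5Fre-Sol-v2} gives $[D_v\mathbf{S}_o(v,w)]\varphi=\mathbf{S}_o(\varphi,w)$, which does not depend on the first slot, so the operator $D_v\mathbf{S}_o(v(t)+\tau v_h(t),w(t)+\tau w_h(t))-D_v\mathbf{S}_o(v(t),w(t))$ is simply $\varphi\mapsto\mathbf{S}_o(\varphi,w(t)+\tau w_h(t))-\mathbf{S}_o(\varphi,w(t))$. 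By \eqref{Chap5Lip-con-sol2}, applied with $w_1-w_2=\tau w_h(t)$, its $\mathcal{B}(H^{-1}(\Omega),H_0^1(\Omega))$–norm is at most $C_o^*\|\tau w_h(t)\|_{H^2(\Omega)}\le C_o^*\|w(t+h)-w(t)\|_{H^2(\Omega)}$. Taking the supremum over $t,t+h\in[0,T]$ and $\tau\in[0,1]$ gives $\alpha_1(h)\le C_o^*\sup_{t,t+h\in[0,T]}\|w(t+h)-w(t)\|_{H^2(\Omega)}$, which tends to $0$ as $h\to0$ since $w\in C([0,T];H^2(\Omega))$ is uniformly continuous on $[0,T]$.

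For \eqref{Chap5Fre-uniformly-continuity-w} I would first identify $z:=[D_w\mathbf{S}_o(v,w)]\psi$ explicitly. Differentiating $\nabla\cdot(w^3\nabla\mathbf{S}_o(v,w))=v$ in the direction $\psi$ — rigorously, by passing to the limit as $\lambda\to0$ in the weak form of \eqref{Chap5BVP-ellipticfor4thorder} for $w+\lambda\psi$, using that $(w+\lambda\psi)^3\to w^3$ and $\lambda^{-1}((w+\lambda\psi)^3-w^3)\to 3w^2\psi$ in $L^\infty(\Omega)$ — shows that $z$ is the weak solution of
\[
\nabla\cdot(w^3\nabla z)=-\nabla\cdot\bigl(3w^2\psi\,\nabla\mathbf{S}_o(v,w)\bigr)\ \text{in}\ \Omega,\qquad z=0\ \text{on}\ \partial\Omega,
\]
that is, $z=\mathbf{S}_o\bigl(-\nabla\cdot(3w^2\psi\,\nabla\mathbf{S}_o(v,w)),\,w\bigr)$, where the right–hand side lies in $H^{-1}(\Omega)$ because $w,\psi\in H^2(\Omega)\hookrightarrow L^\infty(\Omega)$ by Lemma \ref{alg} and $\nabla\mathbf{S}_o(v,w)\in L^2(\Omega)$. (Existence of $D_w\mathbf{S}_o$ is already granted by the remark preceding the statement; the computation only serves to identify it.) In particular, Lemma \ref{Chap5REG} and \eqref{Chap5reg-u-H1} yield the bound $\|z\|_{H^1(\Omega)}\le C_o\|3w^2\psi\,\nabla\mathbf{S}_o(v,w)\|_{L^2(\Omega)}\le C\,\|v\|_{H^{-1}(\Omega)}\,\|\psi\|_{H^2(\Omega)}$ for every $w\in B_r(w_0)$.

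Now fix $\psi$ with $\|\psi\|_{H^2(\Omega)}\le1$, and for $h$ small, $t,t+h\in[0,T]$, $\tau\in[0,1]$ put $\bar v=v(t)+\tau v_h(t)$, $\bar w=w(t)+\tau w_h(t)$; by convexity of the ball $B_r(w_0)$ we have $\bar w\in B_r(w_0)$, while $\|\bar v\|_{H^{-1}(\Omega)}\le\sup_{[0,T]}\|v\|_{H^{-1}(\Omega)}=:M_v$. Write $z_1=[D_w\mathbf{S}_o(\bar v,\bar w)]\psi$, $z_2=[D_w\mathbf{S}_o(v(t),w(t))]\psi$, and $\tilde u_1=\mathbf{S}_o(\bar v,\bar w)$, $\tilde u_2=\mathbf{S}_o(v(t),w(t))$. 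Subtracting the two defining equations for $z_1,z_2$ gives $z_1-z_2=\mathbf{S}_o(F,\bar w)$ with
\[
F=-\nabla\cdot\bigl((\bar w^3-w(t)^3)\nabla z_2\bigr)-\nabla\cdot\bigl(3\bar w^2\psi\,\nabla(\tilde u_1-\tilde u_2)\bigr)-\nabla\cdot\bigl(3(\bar w^2-w(t)^2)\psi\,\nabla\tilde u_2\bigr)\in H^{-1}(\Omega),
\]
so $\|z_1-z_2\|_{H^1(\Omega)}\le C_o\|F\|_{H^{-1}(\Omega)}$ by Lemma \ref{Chap5REG}. I would bound $\|F\|_{H^{-1}(\Omega)}$ termwise via $\|\nabla\cdot g\|_{H^{-1}(\Omega)}\le\|g\|_{L^2(\Omega)}$, using: the algebra property of $H^2(\Omega)$ exactly as in \eqref{Chap5e04} to get $\|\bar w^3-w(t)^3\|_{L^\infty(\Omega)}+\|\bar w^2-w(t)^2\|_{L^\infty(\Omega)}\le C\|\bar w-w(t)\|_{H^2(\Omega)}\le C\|w(t+h)-w(t)\|_{H^2(\Omega)}$; the uniform $H^1$–bounds above to get $\|\nabla z_2\|_{L^2(\Omega)}+\|\nabla\tilde u_2\|_{L^2(\Omega)}\le C\,M_v$; and a splitting as in \eqref{Chap5time-Lip-con-sol3}, combining \eqref{Chap5Lip-con-sol1} and \eqref{Chap5Lip-con-sol2}, to get $\|\nabla(\tilde u_1-\tilde u_2)\|_{L^2(\Omega)}\le\|\tilde u_1-\tilde u_2\|_{H^1(\Omega)}\le C\bigl(\|w(t+h)-w(t)\|_{H^2(\Omega)}+\|v(t+h)-v(t)\|_{H^{-1}(\Omega)}\bigr)$. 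Since all constants are independent of $t$, $\tau$ and of $\psi$ on the unit ball of $H^2(\Omega)$, this gives
\[
\alpha_2(h)\le C\Bigl(\sup_{t,t+h\in[0,T]}\|w(t+h)-w(t)\|_{H^2(\Omega)}+\sup_{t,t+h\in[0,T]}\|v(t+h)-v(t)\|_{H^{-1}(\Omega)}\Bigr),
\]
and the right–hand side tends to $0$ as $h\to0$ by uniform continuity of $w\in C([0,T];H^2(\Omega))$ and $v\in C([0,T];H^{-1}(\Omega))$ on $[0,T]$, which proves \eqref{Chap5Fre-uniformly-continuity-w}.

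The routine half is $\alpha_1$; the real work is in $\alpha_2$, where the main obstacles are: correctly identifying the linear elliptic boundary value problem solved by $[D_w\mathbf{S}_o(v,w)]\psi$ and checking that its data lie in $H^{-1}(\Omega)$, and then organizing the three commutator–type error terms produced by freezing the coefficients at $(v(t),w(t))$ instead of $(\bar v,\bar w)$, in such a way that each of them visibly carries a factor $\|w(t+h)-w(t)\|_{H^2(\Omega)}$ or $\|v(t+h)-v(t)\|_{H^{-1}(\Omega)}$ with constants uniform in $t$, in $\tau\in[0,1]$ and in $\psi$ with $\|\psi\|_{H^2(\Omega)}\le1$.
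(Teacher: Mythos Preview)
Your proof is correct and takes a genuinely different, cleaner route than the paper's in both parts.

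For $\alpha_1$, the paper works directly from the limit definition \eqref{Chap5Fre-Sol-v2}: it introduces four auxiliary solutions $\tilde u_1,\tilde u_2,\tilde u_1^*,\tilde u_2^*$ at the finite-$\lambda$ level, derives an elliptic equation for $\tilde u_1-\tilde u_2-(\tilde u_1^*-\tilde u_2^*)$, and tests against itself to obtain \eqref{Chap5e12} before dividing by $\lambda$. Your observation that linearity of $\varphi\mapsto\mathbf S_o(\varphi,w)$ forces $[D_v\mathbf S_o(v,w)]\varphi=\mathbf S_o(\varphi,w)$, independent of $v$, collapses this entire computation to a single application of \eqref{Chap5Lip-con-sol2}. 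This is strictly simpler and yields the same bound $\alpha_1(h)\le C_o^*\sup\|w(t+h)-w(t)\|_{H^2(\Omega)}$.

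For $\alpha_2$, the paper again stays at finite $\lambda$: with $\tilde u_3,\tilde u_4,\tilde u_3^*,\tilde u_4^*$ it derives equation \eqref{Chap5e15} for $\tilde u_3-\tilde u_4-(\tilde u_3^*-\tilde u_4^*)$, whose right-hand side splits into three commutator terms $Q_1,Q_2,Q_3$, each estimated separately before dividing by $|\lambda|$ at the very end. You instead first identify $z=[D_w\mathbf S_o(v,w)]\psi$ as the solution of the linearized problem $\nabla\cdot(w^3\nabla z)=-\nabla\cdot(3w^2\psi\,\nabla\mathbf S_o(v,w))$ and then subtract the two linearized equations for $z_1,z_2$. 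The resulting three terms in your $F$ correspond, after passing to the limit $\lambda\to0$, to the paper's $Q_3$, $Q_2$ and $Q_1$ contributions, and the final estimate is the same. Your organization has the advantage that all objects live in the limiting spaces from the start, so one never carries the auxiliary parameter $\lambda$; the paper's organization has the advantage that it never needs to justify the identification of $D_w\mathbf S_o$ as a PDE solution, since it only uses the definition \eqref{Chap5Fre-Sol-w2} as a limit of difference quotients. Both are complete.
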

\begin{proof}
	We denote
	\[\tilde{u}_1=\mathbf{S}_o\left(v(t)+\tau v_h(t)+\lambda\varphi,  w(t)+\tau w_h(t)\right),\ \tilde{u}_2=\mathbf{S}_o\left(v(t)+\tau v_h(t),  w(t)+\tau w_h(t)\right),\]
	\[\tilde{u}_1^*=\mathbf{S}_o\left(v(t)+\lambda\varphi, w(t)\right),\quad \tilde{u}_2^*=\mathbf{S}_o\left(v(t), w(t)\right),\]
	and then $\tilde{u}_1-\tilde{u}_2$ and $\tilde{u}_1^*-\tilde{u}_2^*$ satisfy
	\be\label{Chap5e07}
	\lambda\varphi=\nabla\cdot\left\{( w(t)+\tau w_h(t))^3\nabla\left(\tilde{u}_1-\tilde{u}_2\right)\right\},\quad \tilde{u}_1-\tilde{u}_2=0,\ \text{on}\ \partial\Omega,
	\ee
	\be\label{Chap5e08}
	\lambda\varphi=\nabla\cdot\left\{[w(t)]^3\nabla\left(\tilde{u}_1^*-\tilde{u}_2^*\right)\right\},\quad \tilde{u}_1^*-\tilde{u}_2^*=0,\ \text{on}\ \partial\Omega,
	\ee
	respectively. So $w(t)+\tau w_h(t)\in B_r(w_0)$ and Lemma \ref{Chap5REG} imply $\tilde{u}_1^*-\tilde{u}_2^*$ satisfies
	\[\left\|\nabla\left(\tilde{u}^*_1-\tilde{u}^*_2\right)\right\|_{L^2(\Omega)}\leq C_o\left\|\lambda\varphi\right\|_{H^{-1}(\Omega)},\]
	Now, estimates \eqref{Chap5e07} and \eqref{Chap5e08} imply $\tilde{u}_1-\tilde{u}_2-\left(\tilde{u}_1^*-\tilde{u}_2^*\right)$ satisfies
	\begin{align}
		&\nabla\cdot\left\{( w(t)+\tau w_h(t))^3\nabla\left[\tilde{u}_1-\tilde{u}_2-\left(\tilde{u}_1^*-\tilde{u}_2^*\right)\right]\right\}\notag\\
		=&-\nabla\cdot\left\{\left(( w(t)+\tau w_h(t))^3-[w(t)]^3\right)\nabla\left(\tilde{u}_1^*-\tilde{u}_2^*\right)\right\}\label{Chap5e09}
	\end{align}
	Multiply \eqref{Chap5e09} by  $\tilde{u}_1-\tilde{u}_2-\left(\tilde{u}_1^*-\tilde{u}_2^*\right)$, and integrate over $\Omega$, integrating by parts:
	\begin{align}
		&\int_\Omega\left( w(t)+\tau w_h(t)\right)^3\left|\nabla\left[\tilde{u}_1-\tilde{u}_2-\left(\tilde{u}_1^*-\tilde{u}_2^*\right)\right]\right|^2dx\notag\\
		=&-\int_\Omega\left(( w(t)+\tau w_h(t))^3-[w(t)]^3\right)\nabla\left(\tilde{u}_1^*-\tilde{u}_2^*\right)\cdot\nabla\left[\tilde{u}_1-\tilde{u}_2-\left(\tilde{u}_1^*-\tilde{u}_2^*\right)\right]dx\label{Chap5e10}
	\end{align}
	As $w(t)+\tau w_h(t)\in B_r(w_0)$, similar to
\eqref{after23_2}, \eqref{Chap5e04}, \eqref{Chap5e05} and \eqref{Chap5e06},  we obtain
	\begin{align}
		\int_\Omega( w(t)+\tau w_h(t))^3\left|\nabla\left[\tilde{u}_1-\tilde{u}_2-\left(\tilde{u}_1^*-\tilde{u}_2^*\right)\right]\right|^2dx\geq\frac{\kappa^3}{8}\left\|\nabla\left[\tilde{u}_1-\tilde{u}_2-\left(\tilde{u}_1^*-\tilde{u}_2^*\right)\right]\right\|_{L^2(\Omega)}^2,\notag
	\end{align}
	\begin{align}
		&\left|-\int_\Omega\left(\left(w(t)+\tau w_h(t)\right)^3-[w(t)]^3\right)\nabla\left(\tilde{u}_1^*-\tilde{u}_2^*\right)\cdot\nabla\left[\tilde{u}_1-\tilde{u}_2-\left(\tilde{u}_1^*-\tilde{u}_2^*\right)\right]dx\right|\notag\\
		\leq&C\left\|\left(w(t)+\tau w_h(t)\right)^3-[w(t)]^3\right\|_{H^2(\Omega)}\left\|\nabla\left(\tilde{u}^*_1-\tilde{u}^*_2\right)\right\|_{L^2(\Omega)}\left\|\nabla \left[\tilde{u}_1-\tilde{u}_2-\left(\tilde{u}_1^*-\tilde{u}_2^*\right)\right]\right\|_{L^2(\Omega)}\notag\\
		\leq&3CC_o\left(\left\|w_0\right\|_{H^2(\Omega)}+\frac{\kappa}{2C}\right)^2\left\|\lambda\varphi\right\|_{H^{-1}(\Omega)}\left\|w_h(t)\right\|_{H^2(\Omega)}\left\|\nabla\left[\tilde{u}_1-\tilde{u}_2-\left(\tilde{u}_1^*-\tilde{u}_2^*\right)\right]\right\|_{L^2(\Omega)}.\notag
	\end{align}
	Recall $w_h(t)=w(t+h)-w(t)$. Thus
	\be\label{Chap5e12}
	\left\|\tilde{u}_1-\tilde{u}_2-\left(\tilde{u}_1^*-\tilde{u}_2^*\right)\right\|_{H^1(\Omega)}\leq C_o^*\left\|\lambda\varphi\right\|_{H^{-1}(\Omega)}\left\|w(t+h)-w(t)\right\|_{H^2(\Omega)}.
	\ee
	According to the definitions \eqref{Chap5Fre-Sol-v} of Fr\'{e}chet derivative $D_v\mathbf{S}_o(v(t), w(t))$,
	\begin{align}
		\alpha_1(h)=&\sup_{\begin{smallmatrix} t, t+h\in[0, T]\\ \tau\in[0, 1],\ \varphi\in H^{-1}(\Omega)\end{smallmatrix}}\frac{\left\|\lim_{\lambda\rightarrow0}\frac{1}{\lambda}\left[\tilde{u}_1-\tilde{u}_2-\left(\tilde{u}_1^*-\tilde{u}_2^*\right)\right]\right\|_{H^1(\Omega)}}{\left\|\varphi\right\|_{H^{-1}(\Omega)}}\notag\\
		\leq&\sup_{0\leq t<t+h\leq T}C_o^*\left\|w(t+h)-w(t)\right\|_{H^2(\Omega)}\rightarrow0,\ \text{as}\ h\rightarrow0 \notag,
	\end{align}
	because $w\in C\left([0, T]; B_r(w_0)\right)$ and $w(t)$ is uniformly continuous with respect to $t\in[0, T]$.
	
	Hence we conclude the assertion \eqref{Chap5Fre-uniformly-continuity-v}. We next show that the assertion \eqref{Chap5Fre-uniformly-continuity-w} holds.
	
	Recall $v_h(t)=v(t+h)-v(t)$, $w_h(t)=w(t+h)-w(t)$. Choose $\psi\in H^2(\Omega)$ and small $\lambda$ such that $w(t)+\tau w_h(t)$, $w(t)+\tau w_h(t)+\lambda\psi\in B_r(w_0)$. Write
	\[\tilde{u}_3=\mathbf{S}_o\left(v(t)+\tau v_h(t), w(t)+\tau w_h(t)+\lambda\psi\right),\quad \tilde{u}_4=\mathbf{S}_o\left(v(t)+\tau v_h(t), w(t)+\tau w_h(t)\right),\]
	\[\tilde{u}_3^*=\mathbf{S}_o\left(v(t), w(t)+\lambda\psi\right),\quad \tilde{u}_4^*=\mathbf{S}_o\left(v(t), w(t)\right).\]
	Then $\tilde{u}_3-\tilde{u}_4$ and $\tilde{u}_3^*-\tilde{u}_4^*$ satisfy
	\begin{align}\label{Chap5e13}
		&\nabla\cdot\left\{\left(w(t)+\tau w_h(t)+\lambda\psi\right)^3\nabla\left(\tilde{u}_3-\tilde{u}_4\right)\right\}\notag\\
		=&-\nabla\cdot\left\{\left([w(t)+\tau w_h(t)+\lambda\psi]^3-\left(w(t)+\tau w_h(t)\right)^3\right)\nabla\tilde{u}_4\right\}\notag,\\
		&\tilde{u}_3-\tilde{u}_4=0,\ \text{on}\ \partial\Omega,
	\end{align}
	\begin{align}
		\nabla\cdot\left\{[w(t)+\lambda\psi]^3\nabla\left(\tilde{u}^*_3-\tilde{u}^*_4\right)\right\}&=-\nabla\cdot\left\{\left([w(t)+\lambda\psi]^3-[w(t)]^3\right)\nabla\tilde{u}_4^*\right\},\notag\\ \tilde{u}_3^*-\tilde{u}_4^*&=0,\ \text{on}\ \partial\Omega,\label{Chap5e14}
	\end{align}
	respectively. Hence, $\tilde{u}_3-\tilde{u}_4-\left(\tilde{u}_3^*-\tilde{u}_4^*\right)$ satisfies
	\begin{align}
		&\nabla\cdot\left\{\left(w(t)+\tau w_h(t)+\lambda\psi\right)^3\nabla\left[\tilde{u}_3-\tilde{u}_4-\left(\tilde{u}_3^*-\tilde{u}_4^*\right)\right]\right\}\notag\\
		=&-\nabla\cdot\left\{\left\{[w(t)+\tau w_h(t)+\lambda\psi]^3-[w(t)+\tau w_h(t)]^3-\left([w(t)+\lambda\psi]^3-[w(t)]^3\right)\right\}\nabla\tilde{u}_4^*\right\}\notag\\
		&-\nabla\cdot\left\{\left([w(t)+\tau w_h(t)+\lambda\psi]^3-[w(t)+\tau w_h(t)]^3\right)\nabla\left(\tilde{u}_4-\tilde{u}_4^*\right)\right\}\notag\\
		&-\nabla\cdot\left\{\left([w(t)+\tau w_h(t)+\lambda\psi]^3-[w(t)+\lambda\psi]^3\right)\nabla\left(\tilde{u}_3^*-\tilde{u}_4^*\right)\right\}\label{Chap5e15}.
	\end{align}
	Multiply \eqref{Chap5e15} by $\tilde{u}_3-\tilde{u}_4-\left(\tilde{u}_3^*-\tilde{u}_4^*\right)$, and integrate over $\Omega$. Integrating by parts, as $w(t)+\tau w_h(t)+\lambda\psi\in B_r(w_0)$, the left hand side of \eqref{Chap5e15} becomes
	\begin{align}
		&\int_\Omega\left(w(t)+\tau w_h(t)+\lambda\psi\right)^3\left|\nabla\left[\tilde{u}_3-\tilde{u}_4-\left(\tilde{u}_3^*-\tilde{u}_4^*\right)\right]\right|^2dx\notag\\
		\geq&\frac{\kappa^3}{8}\left\|\nabla\left[\tilde{u}_3-\tilde{u}_4-\left(\tilde{u}_3^*-\tilde{u}_4^*\right)\right]\right\|_{L^2(\Omega)}^2.\label{Chap5e16}
	\end{align}
	Let
	\[Q_1=[w(t)+\tau w_h(t)+\lambda\psi]^3-[w(t)+\tau w_h(t)]^3-\left([w(t)+\lambda\psi]^3-[w(t)]^3\right)\]
	\[Q_2=[w(t)+\tau w_h(t)+\lambda\psi]^3-[w(t)+\tau w_h(t)]^3.\]
	\[Q_3=[w(t)+\tau w_h(t)+\lambda\psi]^3-[w(t)+\lambda\psi]^3.\]
	Then the first, second, respectively third terms of right hand side of \eqref{Chap5e15}  become
	\be\label{Chap5e17}
	RHS_1=-\int_\Omega Q_1\nabla\tilde{u}_4^*\cdot\nabla\left[\tilde{u}_3-\tilde{u}_4-\left(\tilde{u}_3^*-\tilde{u}_4^*\right)\right]dx.
	\ee
	\be\label{Chap5e20}
	RHS_2=-\int_\Omega Q_2\nabla\left(\tilde{u}_4-\tilde{u}_4^*\right)\cdot\nabla\left[\tilde{u}_3-\tilde{u}_4-\left(\tilde{u}_3^*-\tilde{u}_4^*\right)\right]dx.
	\ee
	\be\label{Chap5e23}
	RHS_3=-\int_\Omega Q_3\nabla\left(\tilde{u}_3^*-\tilde{u}_4^*\right)\cdot\nabla\left[\tilde{u}_3-\tilde{u}_4-\left(\tilde{u}_3^*-\tilde{u}_4^*\right)\right]dx.
	\ee
	Since $\tilde{u}_4^*$ is a solution of equation
	\[\nabla\cdot\left([w(t)]^3\nabla \tilde{u}^*_4\right)=v(t)\ \text{in}\ \Omega,\quad \tilde{u}^*_4=0\ \text{on}\ \partial\Omega,\]
	Lemma \ref{Chap5REG} implies
	\be\label{Chap5RHS1-1}
	\left\|\nabla\tilde{u}^*_4\right\|_{L^2(\Omega)}\leq C_o\left\|v(t)\right\|_{H^{-1}(\Omega)}.
	\ee
	Because $w(t)$, $w(t)+\lambda\psi$, $w(t)+\tau w_h(t)$, $w(t)+\tau w_h(t)+\lambda\psi\in B_r(w_0)$, then
	\[\left\|6w(t)+3\tau w_h(t)+3\lambda\psi\right\|_{H^2(\Omega)}\leq C_a<\infty\]
	for small $\lambda$ and $\tau\in[0, 1]$, where $C_a$ is a constant depending on $w_0$ and $\Omega$. Hence
	\begin{align}
		\left\|Q_1\right\|_{L^\infty(\Omega)}\leq&C\left\|Q_1\right\|_{H^2(\Omega)}\notag\\
		\leq&C\tau \left\|w_h(t)\right\|_{H^2(\Omega)}\left\|6w(t)+3\tau w_h(t)+3\lambda\psi\right\|_{H^2(\Omega)}\lambda\left\|\psi\right\|_{H^2(\Omega)}\notag\\
		\leq&CC_a|\lambda|\left\|w_h(t)\right\|_{H^2(\Omega)}\left\|\psi\right\|_{H^2(\Omega)}.\label{Chap5RHS1-2}
	\end{align}
	Here $C=C(\Omega)>0$ is a constant.  Thus, using \eqref{Chap5RHS1-1}, \eqref{Chap5RHS1-2} and setting $D_1=C_oCC_a$, we obtain
	\begin{align}
		\left|RHS_1\right|\leq& \left\|Q_1\right\|_{L^\infty(\Omega)}\left\|\nabla\tilde{u}_4^*\right\|_{L^2(\Omega)}\left\|\nabla\left[\tilde{u}_3-\tilde{u}_4-\left(\tilde{u}_3^*-\tilde{u}_4^*\right)\right]\right\|_{L^2(\Omega)}\notag\\
		\leq& D_1|\lambda|\left\|\psi\right\|_{H^2(\Omega)}\left\|w_h(t)\right\|_{H^2(\Omega)}\left\|v(t)\right\|_{H^{-1}(\Omega)}\left\|\nabla\left[\tilde{u}_3-\tilde{u}_4-\left(\tilde{u}_3^*-\tilde{u}_4^*\right)\right]\right\|_{L^2(\Omega)}.\label{Chap5e19}
	\end{align}
	Estimates \eqref{Chap5time-Lip-con-sol3} and \eqref{Chap5time-Lip-con-sol4} imply $\tilde{u}_4-\tilde{u}_4^*$ satisfies
	\be\label{Chap5RHS2-1}
	\left\|\nabla\left(\tilde{u}_4-\tilde{u}_4^*\right)\right\|_{L^2(\Omega)}\leq C_o^*\left\|v(t)\right\|_{H^{-1}(\Omega)}\left\|w_h(t)\right\|_{H^2(\Omega)}+C_o\left\|v_h(t)\right\|_{H^{-1}(\Omega)}.
	\ee
	Because $w(t)+\tau w_h(t)$, $w(t)+\tau w_h(t)+\lambda\psi\in B_r(w_0)$, then
	\[\left\|3\left(w(t)+\tau w_h(t)\right)^2+3\lambda\psi\left(w(t)+\tau w_h(t)\right)+\lambda^2\psi^2\right\|_{H^2(\Omega)}\leq C_b<\infty\]
	for small $\lambda$ and $\tau\in[0, 1]$, where $C_b$ is a constant depending on $w_0$ and $\Omega$. Hence
	\be\label{Chap5RHS2-2}
	\left\|Q_2\right\|_{L^\infty(\Omega)}\leq C\left\|Q_2\right\|_{H^2(\Omega)}\leq CC_b|\lambda|\left\|\psi\right\|_{H^2(\Omega)}.
	\ee
	Here $C=C(\Omega)>0$ is a constant.  Thus, using \eqref{Chap5RHS2-1}, \eqref{Chap5RHS2-2} and setting\\
	$D_2=C_o^*CC_b+CC_oC_b$, we obtain
	\begin{align}
		\left|RHS_2\right|\leq& \left\|Q_2\right\|_{L^\infty(\Omega)}\left\|\nabla\left(\tilde{u}_4-\tilde{u}_4^*\right)\right\|_{L^2(\Omega)}\left\|\nabla\left[\tilde{u}_3-\tilde{u}_4-\left(\tilde{u}_3^*-\tilde{u}_4^*\right)\right]\right\|_{L^2(\Omega)}\notag\\
		\leq& D_2|\lambda|\left\|\psi\right\|_{H^2(\Omega)}\left\|v(t)\right\|_{H^{-1}(\Omega)}\left\|w_h(t)\right\|_{H^2(\Omega)}\left\|\nabla\left[\tilde{u}_3-\tilde{u}_4-\left(\tilde{u}_3^*-\tilde{u}_4^*\right)\right]\right\|_{L^2(\Omega)}\notag\\
		+&D_2|\lambda|\left\|\psi\right\|_{H^2(\Omega)}\left\|v_h(t)\right\|_{H^{-1}(\Omega)}\left\|\nabla\left[\tilde{u}_3-\tilde{u}_4-\left(\tilde{u}_3^*-\tilde{u}_4^*\right)\right]\right\|_{L^2(\Omega)}\label{Chap5e22}.
	\end{align}
	Estimates \eqref{Chap5time-Lip-con-sol3} and \eqref{Chap5time-Lip-con-sol4} imply $\tilde{u}_3^*-\tilde{u}_4^*$ satisfies
	\be\label{Chap5e24}
	\left\|\nabla\left(\tilde{u}_3^*-\tilde{u}_4^*\right)\right\|_{L^2(\Omega)}\leq C_o^*|\lambda|\left\|\psi\right\|_{H^2(\Omega)}\left\|v(t)\right\|_{H^{-1}(\Omega)}.
	\ee
	Because $w(t)+\tau w_h(t)+\lambda\psi$, $w(t)+\lambda\psi\in B_r(w_0)$, then
	\[\left\|3\left(w(t)+\lambda\psi\right)^2+3\tau w_h(t)\left(w(t)+\lambda\psi\right)+\tau^2w_h^2(t)\right\|_{H^2(\Omega)}\leq C_e<\infty\]
	for small $\lambda$ and $\tau\in[0, 1]$, where $C_e$ is a constant depending on $w_0$ and $\Omega$. Hence
	\be\label{Chap5e24-1}
	\left\|Q_3\right\|_{L^\infty(\Omega)}\leq C\left\|Q_3\right\|_{H^2(\Omega)}\leq CC_e\left\|w_h(t)\right\|_{H^2(\Omega)}.
	\ee
	Here $C=C(\Omega)>0$ is a constant.  Thus, using \eqref{Chap5e24}, \eqref{Chap5e24-1} and setting $D_3=C_o^*CC_e$, we obtain
	\begin{align}
		\left|RHS_3\right|\leq& \left\|Q_3\right\|_{L^\infty(\Omega)}\left\|\nabla\left(\tilde{u}_3^*-\tilde{u}_4^*\right)\right\|_{L^2(\Omega)}\left\|\nabla\left[\tilde{u}_3-\tilde{u}_4-\left(\tilde{u}_3^*-\tilde{u}_4^*\right)\right]\right\|_{L^2(\Omega)}\notag\\
		\leq& D_3|\lambda|\left\|\psi\right\|_{H^2(\Omega)}\left\|v(t)\right\|_{H^{-1}(\Omega)}\left\|w_h(t)\right\|_{H^2(\Omega)}\left\|\nabla\left[\tilde{u}_3-\tilde{u}_4-\left(\tilde{u}_3^*-\tilde{u}_4^*\right)\right]\right\|_{L^2(\Omega)}\label{Chap5e25}.
	\end{align}
	Combining \eqref{Chap5e16}, \eqref{Chap5e19}, \eqref{Chap5e22} with \eqref{Chap5e25} gives
	\begin{align}
		\left\|\nabla\left[\tilde{u}_3-\tilde{u}_4-\left(\tilde{u}_3^*-\tilde{u}_4^*\right)\right]\right\|_{L^2(\Omega)}\leq&\left[\frac{8(D_1+D_2+D_3)}{\kappa^3}\right]|\lambda|\left\|\psi\right\|_{H^2(\Omega)}\left\|w_h(t)\right\|_{H^2(\Omega)}\left\|v(t)\right\|_{H^{-1}(\Omega)}\notag\\
		+&\frac{8D_2}{\kappa^3}|\lambda|\left\|\psi\right\|_{H^2(\Omega)}\left\|v_h(t)\right\|_{H^{-1}(\Omega)}\label{Chap5e26}.
	\end{align}
	Because of the Poincar\'{e} inequality
	\[\left\|\tilde{u}_3-\tilde{u}_4-\left(\tilde{u}_3^*-\tilde{u}_4^*\right)\right\|_{L^2(\Omega)}\leq C\left\|\nabla\left[\tilde{u}_3-\tilde{u}_4-\left(\tilde{u}_3^*-\tilde{u}_4^*\right)\right]\right\|_{L^2(\Omega)}.\]
	Setting $D_4=\frac{8(D_1+D_2+D_3)}{\kappa^3}+ \frac{8D_2}{\kappa^3}\sqrt{C^2+1}$, we obtain
	\begin{align}
		\left\|\tilde{u}_3-\tilde{u}_4-\left(\tilde{u}_3^*-\tilde{u}_4^*\right)\right\|_{H^1(\Omega)}\leq& D_4|\lambda|\left\|\psi\right\|_{H^2(\Omega)}\left\|w_h(t)\right\|_{H^2(\Omega)}\left\|v(t)\right\|_{H^{-1}(\Omega)}\notag\\
		+&D_4|\lambda|\left\|\psi\right\|_{H^2(\Omega)}\left\|v_h(t)\right\|_{H^{-1}(\Omega)}.\label{Chap5e27}
	\end{align}
	Recall $v_h(t)=v(t+h)-v(t)$ and $w_h(t)=w(t+h)-w(t)$. According to the definitions \eqref{Chap5Fre-Sol-v} for the Fr\'{e}chet derivative $D_w\mathbf{S}_o(v(t), w(t))$,
	\begin{align}
		\alpha_2(h)=&\sup_{\begin{smallmatrix} t, t+h\in[0, T]\\ \tau\in[0, 1],\ \psi\in H^{2}(\Omega)\end{smallmatrix}}\frac{\left\|\lim_{\lambda\rightarrow0}\frac{1}{\lambda}\left[\tilde{u}_3-\tilde{u}_4-\left(\tilde{u}_3^*-\tilde{u}_4^*\right)\right]\right\|_{H^1(\Omega)}}{\left\|\psi\right\|_{H^{2}(\Omega)}}\notag\\
		\leq&\sup_{0\leq t<t+h\leq T}D_4\left(\left\|v(t)\right\|_{H^{-1}(\Omega)}\left\|w(t+h)-w(t)\right\|_{H^2(\Omega)}+\left\|v(t+h)-v(t)\right\|_{H^{-1}(\Omega)}\right)\notag\\
		\rightarrow&0,\ \text{as}\ h\rightarrow0 \notag,
	\end{align}
	because $v\in C\left([0, T]; H^{-1}(\Omega)\right)$, $w\in C\left([0, T]; B_r(w_0)\right)$, $\left\|v(t)\right\|_{H^{-1}(\Omega)}<\infty$, $v(t)$ and $w(t)$ are uniformly continuous with respect to $t\in[0, T]$ respectively.
\end{proof}
\section{Wellposedness of the Nonlinear Coupled System}\label{Chap54th-order problem}

\subsection*{Abstract Formulation of the Nonlinear Coupled System}
Recall that $\beta_F$, $\beta_p$, $\theta_1$ and $\theta_2$ are given positive constants, $\Omega\subset\mathbb{R}^{n}$ is an open and bounded subspace with smooth boundary $\partial\Omega$, $n=1,\ 2$.

Let $v_0\in H_o^2(\Omega)$ and $w_0\in H^4_{\theta_2}(\Omega)$ be  given functions such that the elliptic equation for $\tilde{u}_0$, $\nabla\cdot\left(w_0^3\nabla \tilde{u}_0\right)=v_0$, has a unique solution $\tilde{u}_0\in H_0^1(\Omega)$. Based on the definition \eqref{Chap5solutionop} of solution operator $\mathbf{S}_o$, $\tilde{u}_0=\mathbf{S}_o(v_0, w_0)$.

Restrict $r\in\left(0, \frac{\kappa}{2C}\right)$. Here, $C=C(\Omega)>0$ is a constant, $\kappa=\displaystyle\inf_{x\in{\Omega}}w_0(x)>0$. Set ${u}_0=\tilde{u}_0+\theta_1\in H^1(\Omega)$, $v_0=\tilde{v}_0\in H_o^2(\Omega)$ and $\tilde{w}_0=w_0-\theta_2\in H^4_o(\Omega)$.  Take $T\in(0, \infty)$ to be specified below. We introduce a state $\mathbf{a}=\left(a_1,\ a_2\right)$, a state space $\mathfrak{X}$ defined by 
\be\label{state-space}
\mathfrak{X}=L^2(\Omega)\times H_o^2(\Omega)
\ee
endowed with the norm $\|\cdot\|_{\mathfrak{X}}=\left\|\cdot\right\|_{L^2(\Omega)\times H^2(\Omega)}$ and the scalar product
\[\langle \mathbf{a}, \mathbf{b}\rangle_{\mathfrak{X}}=\int_\Omega a_1\cdot\overline{b_1}+\nabla a_2\cdot\nabla\overline{b_2}+\Delta a_2\cdot\Delta\overline{b_2}dx,\quad \mathbf{a}=\left(a_1, a_2\right)\in\mathfrak{X}, \ \mathbf{b}=\left(b_1, b_2\right)\in\mathfrak{X}.\] 
We then define a operator $\mathbb{A}$ by 
\begin{align}
	 D(\mathbb{A}):=&\bigg\{\phi\in H_o^2(\Omega): \exists\ f\in L^2(\Omega),\ \forall\ \psi\in H_o^2(\Omega),\ \text{such that}\ \notag\\
	&\int_\Omega \nabla\phi\cdot\nabla\overline{\psi}+\Delta\phi\cdot\Delta\overline{\psi}dx=\int_\Omega f\cdot\overline{\psi}dx \bigg\},\label{domain1}\\
	\mathbb{A}\phi:=-f,\ \text{where}& \ f \ \text{is given by} \ D(\mathbb{A}), \quad \left\|\phi\right\|_{D(\mathbb{A})}:=\left\|\phi\right\|_{L^2(\Omega)}+\left\|\mathbb{A}\phi\right\|_{L^2(\Omega)}.\label{domain2}
\end{align}
It is easy to see that $\Delta \phi|_{\partial\Omega}=0$ for all $\phi\in D(\mathbb{A})$, and from elliptic regularity theory, it follows that
\begin{equation}\label{domain3}
	D(\mathbb{A})=\left\{\chi\in H^{4}(\Omega): \ \chi|_{\partial\Omega}=\Delta \chi|_{\partial\Omega}=0\right\}=H_o^4(\Omega),\quad\left\|\chi\right\|_{D(\mathbb{A})}\simeq\|\chi\|_{H^4(\Omega)}.
\end{equation}
We also define the linear operator $\mathcal{A}$ with its domain $D(\mathcal{A})$ and the graph norm of $\mathcal{A}$ by
\bse\label{A-1}
\be\label{A-1-1}
\mathcal{A}=\begin{pmatrix}0\ &\mathbb{A}\\ 1\ &0\end{pmatrix}, \quad D(\mathcal{A})= H_o^2(\Omega)\times H_o^4(\Omega),
\ee
\be\label{A-1-2}
\|\mathbf{d}\|_{D(\mathcal{A})}:=\|\mathbf{d}\|_{\mathfrak{X}}+\|\mathcal{A}\mathbf{d}\|_{\mathfrak{X}}\simeq\|d_1\|_{H^2(\Omega)}+\|d_2\|_{H^4(\Omega)}, \quad\mathbf{d}=\left(d_1, d_2\right)\in D(\mathcal{A}),
\ee
\ese 
and have the following generation result for $\mathcal{A}$. The proof can be found in Appendix \ref{AppB}. 
\begin{lemma}\label{generator}
The linear operator $\mathcal{A}$ generates a strongly continuous semigroup $\left\{T(t)\in\mathcal{B}\left(\mathfrak{X}\right): t\in[0, \infty)\right\}$.
\end{lemma}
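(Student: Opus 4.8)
The plan is to apply the Lumer--Phillips theorem with respect to the Hilbert-space structure on $\mathfrak{X}$ already fixed in \eqref{state-space}: I will check that $\mathcal{A}$ is densely defined, dissipative, and that $\lambda I-\mathcal{A}$ is surjective for some $\lambda>0$. Then $\mathcal{A}$ generates a $C_0$-semigroup of contractions on $\mathfrak{X}$; in fact, since the identical reasoning applies verbatim to $-\mathcal{A}$, it generates a $C_0$-group of isometries, which is the natural statement for the (dispersive, energy-conserving) linearised plate equation encoded in $\mathcal{A}$.

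\textbf{Step 1 (structure of $\mathbb{A}$).} The sesquilinear form $a(\phi,\psi):=\int_\Omega\nabla\phi\cdot\nabla\overline\psi+\Delta\phi\cdot\Delta\overline\psi\,dx$ is symmetric and bounded on $H_o^2(\Omega)$, and by the Poincar\'e inequality applied to $H_o^2(\Omega)\subset H^1_0(\Omega)$ it is coercive there. Hence, by the first representation theorem (Friedrichs construction), the operator $B:=-\mathbb{A}$ defined by \eqref{domain1}--\eqref{domain2} is precisely the positive self-adjoint operator on $L^2(\Omega)$ associated with $a$, with form domain $H_o^2(\Omega)$; the identification of its operator domain with $H_o^4(\Omega)$ and the norm equivalence $\|\cdot\|_{D(B)}\simeq\|\cdot\|_{H^4(\Omega)}$ are exactly \eqref{domain3} and rest on global $H^4$-regularity for $\Delta^2-\Delta$ under the Navier boundary conditions $\phi|_{\partial\Omega}=\Delta\phi|_{\partial\Omega}=0$ on the smooth domain $\Omega$. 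Coercivity also gives $0\in\rho(B)$, so $\lambda^2 I+B$ is boundedly invertible from $L^2(\Omega)$ onto $H_o^4(\Omega)$ for every $\lambda\geq0$; moreover $D(B^{1/2})=H_o^2(\Omega)$ with $\|B^{1/2}\phi\|_{L^2(\Omega)}^2=a(\phi,\phi)$, so that $\langle(a_1,a_2),(b_1,b_2)\rangle_{\mathfrak{X}}=\langle a_1,b_1\rangle_{L^2(\Omega)}+\langle B^{1/2}a_2,B^{1/2}b_2\rangle_{L^2(\Omega)}$.

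\textbf{Step 2 (dissipativity and density).} For $\mathbf{a}=(a_1,a_2)\in D(\mathcal{A})=H_o^2(\Omega)\times H_o^4(\Omega)$ we have $\mathcal{A}\mathbf{a}=(\mathbb{A}a_2,a_1)=(-Ba_2,a_1)$, hence, using $a_1\in D(B^{1/2})$ and $a_2\in D(B)$,
\[
\langle\mathcal{A}\mathbf{a},\mathbf{a}\rangle_{\mathfrak{X}}=\langle -Ba_2,a_1\rangle_{L^2(\Omega)}+\langle B^{1/2}a_1,B^{1/2}a_2\rangle_{L^2(\Omega)}=-\langle Ba_2,a_1\rangle_{L^2(\Omega)}+\langle a_1,Ba_2\rangle_{L^2(\Omega)},
\]
which is purely imaginary, so $\mathrm{Re}\,\langle\mathcal{A}\mathbf{a},\mathbf{a}\rangle_{\mathfrak{X}}=0$, and likewise for $-\mathcal{A}$; both operators are dissipative. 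Density of $D(\mathcal{A})$ in $\mathfrak{X}$ is immediate, since $H_o^4(\Omega)$ is dense in $H_o^2(\Omega)$ and $H_o^2(\Omega)$ is dense in $L^2(\Omega)$.

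\textbf{Step 3 (range condition) and conclusion.} Fix $\lambda>0$ and $\mathbf{b}=(b_1,b_2)\in\mathfrak{X}$. The equation $(\lambda I-\mathcal{A})\mathbf{a}=\mathbf{b}$ reads $\lambda a_1+Ba_2=b_1$, $\lambda a_2-a_1=b_2$; eliminating $a_1=\lambda a_2-b_2$ yields $(\lambda^2 I+B)a_2=b_1+\lambda b_2$, which has a unique solution $a_2=(\lambda^2 I+B)^{-1}(b_1+\lambda b_2)\in D(B)=H_o^4(\Omega)$ by Step 1, and then $a_1=\lambda a_2-b_2\in H_o^2(\Omega)$, so $\mathbf{a}\in D(\mathcal{A})$ and $\mathrm{Range}(\lambda I-\mathcal{A})=\mathfrak{X}$. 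By the Lumer--Phillips theorem, $\mathcal{A}$ generates the asserted $C_0$-semigroup $\{T(t)\in\mathcal{B}(\mathfrak{X}):t\geq0\}$ (of contractions, and in fact a group once the same argument is run for $-\mathcal{A}$). I expect the only genuinely non-routine ingredient to be the self-adjointness of $B$ together with the elliptic-regularity identification $D(B)=H_o^4(\Omega)$ in Step 1; this is classical and is already granted in \eqref{domain3}, and everything else reduces to the short computations above. An alternative would be to verify directly that $i\mathcal{A}$ is self-adjoint and invoke Stone's theorem, but the explicit resolvent computation makes the Lumer--Phillips route more transparent.
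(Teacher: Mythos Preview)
Your proof is correct. The paper's own argument is very close in spirit but invokes Stone's theorem rather than Lumer--Phillips: it computes $\langle\mathcal{A}\mathbf{a},\mathbf{b}\rangle_{\mathfrak{X}}=-\langle\mathbf{a},\mathcal{A}\mathbf{b}\rangle_{\mathfrak{X}}$ directly to get skew-symmetry, then uses Lax--Milgram to invert $\mathbb{A}$ and hence $\mathcal{A}$ itself (via the explicit inverse $\mathcal{R}=\begin{psmallmatrix}0&1\\\mathbb{A}^{-1}&0\end{psmallmatrix}$), so that $0\in\rho(\mathbf{i}\mathcal{A})$ and the symmetric operator $\mathbf{i}\mathcal{A}$ is self-adjoint. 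You instead package the positivity of $B=-\mathbb{A}$ through the form representation $D(B^{1/2})=H_o^2(\Omega)$, which makes the dissipativity calculation a one-liner, and then solve the resolvent equation at a strictly positive $\lambda$ rather than at $\lambda=0$. Both routes rest on the same two ingredients---coercivity of the underlying form and the purely imaginary value of $\langle\mathcal{A}\mathbf{a},\mathbf{a}\rangle_{\mathfrak{X}}$---and you correctly anticipate the Stone alternative in your closing remark; the Lumer--Phillips version has the mild advantage of not needing to argue that a symmetric operator with a real resolvent point is self-adjoint, while the paper's version yields the unitary-group conclusion in one stroke.
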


We are going to study the unique existence of the strict solution for the initial-boundary problems of nonlinear coupled system \eqref{Chap5cp2}.

We set $\tilde{w}(x,t)=w(x,t)-\theta_2$ and $\tilde{w}(t): x\in\Omega\longrightarrow \tilde{w}(x,t)\in\mathbb{R}$, the pinned boundary conditions $\tilde{w}(x,t)=0$,  $\Delta\tilde{w}(x,t)=0$,  $x\in\partial\Omega$, $t\in[0, T]$ and the differential operator $\Delta-\Delta^2$ of the equation \eqref{Chap5cp2-1-2} are incorporated in the operator $\mathbb{A}$ in the generalized definition forms \eqref{domain1} and \eqref{domain2}, and $\mathbb{A}$ defines the pinned realization of the differential operator $\Delta-\Delta^2$.

Using the definition of $\mathbb{A}$, we rewrite the coupled system \eqref{Chap5cp2} as the equation \eqref{Chap54th-LWE-1} with the abstract inhomogeneous term $\tilde{u}=\mathbf{S}_{o}\left(\tilde{v}, \tilde{w}+\theta_2\right)$, $\tilde{v}=\tilde{w}'$:
\be\label{Chap54th-LWE-1}
\tilde{w}^{''}(t)=\mathbb{A}\tilde{w}(t)+[G(\tilde{w})](t)+\beta_p\tilde{u}(t),\quad t\in[0, T],\quad \tilde{w}(0)=\tilde{w}_0,\quad \tilde{w}'(0)=\tilde{v}_0.
\ee
\textcolor{black}{Note that the constant boundary datum $\theta_2$ lifts to a constant function in $\Omega$. Further,} $\tilde{w}'$ and $\tilde{w}^{''}$ respectively denote the first and second derivative of the unknown function $\tilde{w}$ with respect to $t\in [0, T]$, $\tilde{u}(t)=\mathbf{S}_{o}\left(\tilde{v}(t), \tilde{w}(t)+\theta_2\right)$, $\tilde{v}(t)=\tilde{w}'(t)$,
\be\label{Chap5G-2-2}
[G(\tilde{w})](t)=-\frac{\beta_F}{(\tilde{w}(t)+\theta_2)^2}+\beta_p(\theta_1-1),\quad \Phi_0=\left(\tilde{v}_0, \tilde{w}_0\right)\in D(\mathcal{A}).
\ee

\begin{theorem}\label{Chap54th-solu-thm} Let $\kappa=\displaystyle\inf_{x\in{\Omega}}\left\{\tilde{w}_0(x)+\theta_2\right\}$ and $C=C(\Omega)>0$ be  constants, $\tilde{v}_0\in H^2_o(\Omega)$, $\tilde{w}_0\in H^4_o(\Omega)$.
	Then there exists $T_0>0$,  such that for all $T\in(0, T_0)$, the semilinear evolution equation for unknown function $\left(\tilde{v}, \tilde{w}\right)$ with $\tilde{u}=\mathbf{S}_{o}\left(\tilde{v}, \tilde{w}+\theta_2\right)$,
	\be\label{Chap54th-SWE-1}
	\begin{pmatrix}\tilde{v}'(t) \\ \tilde{w}'(t) \end{pmatrix}=\mathcal{A}\begin{pmatrix}\tilde{v}(t) \\ \tilde{w}(t) \end{pmatrix}+\begin{pmatrix}[G(\tilde{w})](t)+\beta_p\tilde{u}(t), \\ 0 \end{pmatrix}, \ t\in[0, T],\ \begin{pmatrix}\tilde{v}(0) \\ \tilde{w}(0) \end{pmatrix}=\begin{pmatrix}\tilde{v}_0 \\ \tilde{w}_0\end{pmatrix},
	\ee
	admits a unique mild solution $\left(\tilde{v}, \tilde{w}\right)\in  C\left([0, T]; L^2(\Omega)\times H_o^2(\Omega)\right)$, i.e. $\left(\tilde{v}, \tilde{w}\right)$ satisfies
	\be\label{Chap5mild-solu-form}
	\begin{pmatrix}\tilde{v}(t)\\ \tilde{w}(t)\end{pmatrix}=T(t)\begin{pmatrix}\tilde{v}_0\\ \tilde{w}_0\end{pmatrix}+
	\int_0^t\left\{T(t-s)\begin{pmatrix}[G(\tilde{w})](s)+\beta_p\tilde{u}(s)\\ 0\end{pmatrix}\right\}ds.
	\ee
\end{theorem}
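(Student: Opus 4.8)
The plan is to solve the semilinear evolution equation \eqref{Chap54th-SWE-1} by a Banach fixed point argument carried out entirely inside the complete metric space $\mathcal{Z}(T)$ of \eqref{ini-NBD}, equipped with the metric induced by $\|\cdot\|_{C([0,T];\mathfrak{X})}$. The decisive structural point is that membership in $\mathcal{Z}(T)$ forces $\tilde w(t)+\theta_2\in B_r(w_0)$ for every $t\in[0,T]$ (since $\tilde w_0+\theta_2=w_0$), so that the elliptic solver $\mathbf{S}_o$ and the nonlinearity $G$ are available together with all estimates of Sections \ref{Sec:prerequisite} and \ref{Chap54th-order problem}. For $(\tilde v,\tilde w)\in\mathcal{Z}(T)$ set $\tilde u=\mathbf{S}_o(\tilde v,\tilde w+\theta_2)$ and define
\[
\big[\mathcal{T}(\tilde v,\tilde w)\big](t)=T(t)\begin{pmatrix}\tilde v_0\\ \tilde w_0\end{pmatrix}+\int_0^t T(t-s)\begin{pmatrix}[G(\tilde w)](s)+\beta_p\tilde u(s)\\ 0\end{pmatrix}\,ds,\qquad t\in[0,T].
\]
A fixed point of $\mathcal{T}$ in $\mathcal{Z}(T)$ is precisely a mild solution satisfying \eqref{Chap5mild-solu-form}, so it suffices to show that, for $T$ small, $\mathcal{T}$ maps $\mathcal{Z}(T)$ into itself and is a strict contraction there; fix once and for all $M:=\sup_{t\in[0,T_0]}\|T(t)\|_{\mathcal{B}(\mathfrak{X})}<\infty$ for a provisional $T_0$, using Lemma \ref{generator}.

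First I would check that $\mathcal{T}$ is well defined. Since $\tilde v\in C([0,T];L^2(\Omega))\hookrightarrow C([0,T];H^{-1}(\Omega))$ and $\tilde w+\theta_2\in C([0,T];B_r(w_0))$, the time-dependent form of Lemma \ref{Chap5REG} gives $\tilde u\in C([0,T];H^1_0(\Omega))$ with $\sup_t\|\tilde u(t)\|_{H^1}\le C_o\sup_t\|\tilde v(t)\|_{H^{-1}}\le C_oC(\|\tilde v_0\|_{L^2}+r)=:K_2$ via \eqref{Chap5time-reg-u-H2}; together with $G(\tilde w)\in C([0,T];H^2(\Omega))$ from Corollary \ref{Lip-G-Lem} this shows that the inhomogeneity $s\mapsto([G(\tilde w)](s)+\beta_p\tilde u(s),0)$ lies in $C([0,T];\mathfrak{X})$, so $\mathcal{T}(\tilde v,\tilde w)\in C([0,T];\mathfrak{X})$ with value $(\tilde v_0,\tilde w_0)$ at $t=0$. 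For the self-mapping property write
\[
\big[\mathcal{T}(\tilde v,\tilde w)\big](t)-\begin{pmatrix}\tilde v_0\\ \tilde w_0\end{pmatrix}=\big(T(t)-I\big)\begin{pmatrix}\tilde v_0\\ \tilde w_0\end{pmatrix}+\int_0^t T(t-s)\begin{pmatrix}[G(\tilde w)](s)+\beta_p\tilde u(s)\\ 0\end{pmatrix}\,ds.
\]
The first term tends to $0$ uniformly on $[0,T]$ as $T\to0$ by strong continuity of $\{T(t)\}$ at the fixed vector $(\tilde v_0,\tilde w_0)\in\mathfrak{X}$; the second is bounded in $\mathfrak{X}$ by $M\,T\,\big(\|G(\tilde w_0)\|_{H^2}+L_Gr+\beta_pK_2\big)$, where \eqref{Lip-G-1} controls $\sup_s\|[G(\tilde w)](s)\|_{H^2}$. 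Both contributions are $\le r$ for $T$ small, giving $\mathcal{T}(\mathcal{Z}(T))\subseteq\mathcal{Z}(T)$.

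Next I would establish the contraction estimate. For $(\tilde v_i,\tilde w_i)\in\mathcal{Z}(T)$, $i=1,2$, with $\tilde u_i=\mathbf{S}_o(\tilde v_i,\tilde w_i+\theta_2)$, the initial terms cancel and
\[
\sup_{t\in[0,T]}\Big\|\big[\mathcal{T}(\tilde v_1,\tilde w_1)\big](t)-\big[\mathcal{T}(\tilde v_2,\tilde w_2)\big](t)\Big\|_{\mathfrak{X}}\le M\,T\sup_{s\in[0,T]}\Big(\|[G(\tilde w_1)](s)-[G(\tilde w_2)](s)\|_{L^2}+\beta_p\|\tilde u_1(s)-\tilde u_2(s)\|_{L^2}\Big).
\]
Here \eqref{Lip-G} bounds the $G$-difference by $L_G\sup_s\|\tilde w_1(s)-\tilde w_2(s)\|_{H^2}$, and the time-dependent version of Lemma \ref{Chap5LIP-Elliptic}, namely \eqref{Chap5time-Lip-con-sol3} together with the a priori bound $\sup_s\|\tilde v_2(s)\|_{H^{-1}}\le C(\|\tilde v_0\|_{L^2}+r)$, bounds $\sup_s\|\tilde u_1(s)-\tilde u_2(s)\|_{H^1}$ by a constant times $\sup_s\|\tilde w_1(s)-\tilde w_2(s)\|_{H^2}+\sup_s\|\tilde v_1(s)-\tilde v_2(s)\|_{L^2}$. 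Hence $\mathcal{T}$ is Lipschitz on $\mathcal{Z}(T)$ with constant $C_\star M T$, where $C_\star$ depends only on $\Omega,\kappa,\|w_0\|_{H^2},\beta_F,\beta_p,\|\tilde v_0\|_{L^2},r$ but not on $T$. Choosing $T_0$ so small that simultaneously the self-mapping bound of the previous paragraph holds and $C_\star M T_0<1$ makes $\mathcal{T}$ a contraction on $\mathcal{Z}(T)$ for every $T\in(0,T_0)$, and Banach's fixed point theorem yields a unique fixed point $(\tilde v,\tilde w)\in\mathcal{Z}(T)$, i.e. the unique mild solution obeying \eqref{Chap5mild-solu-form}. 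To upgrade uniqueness to the full space $C([0,T];L^2(\Omega)\times H^2_o(\Omega))$ one notes that any mild solution with data $(\tilde v_0,\tilde w_0)$ remains in $B_r(\tilde w_0)$ on a neighbourhood of $0$ by continuity, so on that subinterval the same Lipschitz bounds give a Gronwall inequality forcing coincidence with $(\tilde v,\tilde w)$, and a standard continuation argument propagates uniqueness over $[0,T]$. The main work here is not conceptual but the bookkeeping of constants from Corollaries \ref{estimates}–\ref{Lip-G-Lem}, Lemma \ref{Chap5LIP-Elliptic} and its time-dependent corollary, so that a single $T_0$ secures both invariance of $\mathcal{Z}(T)$ and contraction; the one genuinely delicate point is that the elliptic estimates only hold while $\tilde w+\theta_2\in B_r(w_0)$, which is exactly why the iteration must be run inside $\mathcal{Z}(T)$ rather than in all of $C([0,T];\mathfrak{X})$.
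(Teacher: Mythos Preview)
Your proposal is correct and follows essentially the same approach as the paper: a Banach fixed-point argument on the same complete metric space $\mathcal{Z}(T)$, with invariance obtained from strong continuity of $\{T(t)\}$ together with \eqref{Lip-G-1} and \eqref{Chap5time-reg-u-H2}, and contraction from \eqref{Lip-G} and \eqref{Chap5time-Lip-con-sol3}. Your additional remark on propagating uniqueness from $\mathcal{Z}(T)$ to all of $C([0,T];L^2(\Omega)\times H^2_o(\Omega))$ via a continuation/Gronwall argument is a welcome refinement that the paper leaves implicit.
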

\begin{proof}
	We let $T\in(0, \infty)$, which will be specified below, and define a complete metric space  $\mathcal{Z}(T)$ for the metric induced by the norm $\sup_{t\in[0, T]}\|(\tilde{v}(t),\tilde{w}(t))\|_{L^2(\Omega)\times H^{2}(\Omega)}$ as follows:
	\begin{align}
		\mathcal{Z}(T):=\bigg\{&(\tilde{v},\tilde{w})\in C\left([0,T]; L^2(\Omega)\times H_o^2(\Omega)\right):\
		\left(\tilde{v}(0), \tilde{w}(0)\right)=\left(\tilde{v}_0, \tilde{w}_0\right),\notag\\
		&\sup_{t\in[0, T]}\|(\tilde{v}(t)-\tilde{v}_0,\tilde{w}(t)-\tilde{w}_0)\|_{L^2(\Omega)\times H^{2}(\Omega)}\leq r\bigg\},\quad \forall\ r\in\left(0, \frac{\kappa}{2C}\right)\label{Chap5ini-NBD}.
	\end{align}
	Recall that $\mathcal{A}$ generates a strongly continuous semigroup ($C_0$-semigroup)
	\[\left\{T(t)\in\mathcal{B}\left(L^2(\Omega)\times H_o^2(\Omega)\right):\ t\in[0, \infty)\right\},\] and  $[G\left(\tilde{w}\right)](t)=-\beta_F[\tilde{w}(t)+\theta_2]^{-2}+\beta_p\left(\theta_1-1\right)$, we introduce a operator $\Phi$ on $\mathcal{Z}(T)$ by
	\[\left[\Phi(\tilde{v},\tilde{w})\right](t):=T(t)\begin{pmatrix}\tilde{v}_0\\ \tilde{w}_0\end{pmatrix}+\int_0^t\left\{T(t-s)\begin{pmatrix}[G(\tilde{w})](s)+\beta_p\tilde{u}(s)\\ 0\end{pmatrix}\right\}ds,\quad \forall\ t\in [0,T].\]
	We notice that
	\[\left[\Phi(\tilde{v},\tilde{w})\right](0)=T(0)\begin{pmatrix}\tilde{v}_0\\ \tilde{w}_0\end{pmatrix}=\begin{pmatrix}\tilde{v}_0\\ \tilde{w}_0\end{pmatrix}\in D(\mathcal{A}).\]
	According to  Lemma 1.3 of Chapter II in \cite{EK},
	\[T(t)\begin{pmatrix}\tilde{v}_0\\ \tilde{w}_0\end{pmatrix}\in D(\mathcal{A}),\ \forall\ t\in[0, T].\]
	Since $(\tilde{v},\tilde{w})\in \mathcal{Z}(T)$, $\tilde{u}\in C\left([0, T]; H_0^1(\Omega)\right)$ such that $G(\tilde{w})+\beta_p\tilde{u}\in C([0, T]; L^2(\Omega))$, hence
	\[\begin{pmatrix}[G(\tilde{w})](t)+\beta_p\tilde{u}(t)\\ 0\end{pmatrix},\quad \int_0^t\left\{T(t-s)\begin{pmatrix}[G(\tilde{w})](s)+\beta_p\tilde{u}(s)\\ 0\end{pmatrix}\right\}ds\in L^2(\Omega)\times H_o^2(\Omega).\]
	Therefore, $\Phi$ is a nonlinear operator which maps $\mathcal{Z}(T)$ into $C\left([0,T]; L^2(\Omega)\times H_o^2(\Omega)\right)$:
	\[\Phi: \ \mathcal{Z}(T)\rightarrow\ C\left([0,T]; L^2(\Omega)\times H_o^2(\Omega)\right). \]
	We next show that there exists a unique mild solution $(\tilde{v},\tilde{w})\in \mathcal{Z}(T)$ of the semilinear evolution equation \eqref{Chap54th-SWE-1} which is a fixed point of $\Phi$ on $\mathcal{Z}(T)$.
	
	We denote by $M_0=\sup_{t\in [0, \infty )}\|T(t)\|_{\mathcal{B}\left(L^2(\Omega)\times H_o^2(\Omega)\right)}$ an operator norm of $\left\{T(t)\right\}_{0\leq t<\infty}$ on the space $L^2(\Omega)\times H_o^2(\Omega)$. If $(\tilde{v}_1,\tilde{w}_1),\ (\tilde{v}_2,\tilde{w}_2)\in \mathcal{Z}(T)$, then $\tilde{v}_1(t)\in L^2(\Omega)$, $\tilde{w}_1(t)\in H_o^2(\Omega)$,  thus $[G(\tilde{w}_1)](t)\in H^2(\Omega)$, $[G(\tilde{w}_1)](t)-[G(\tilde{w}_2)](t)\in H^2(\Omega)$, $\forall\ t\in[0, T]$.
	
	We set $\tilde{u}_1(t)=\mathbf{S}_o\left(\tilde{v}_1(t), \tilde{w}_1(t)+\theta_2\right)$ and $\tilde{u}_2(t)=\mathbf{S}_o\left(\tilde{v}_2(t), \tilde{w}_2(t)+\theta_2\right)$. The estimates \eqref{Lip-G}, \eqref{Lip-G-1} of Corollary \ref{estimates} and estimates \eqref{Chap5time-reg-u-H2}, \eqref{Chap5time-Lip-con-sol3} imply
	\begin{align}
		\sup_{t\in[0, T]}\left\|[G(\tilde{w}_1)](t)-[G(\tilde{w}_2)](t)\right\|_{L^{2}(\Omega)}\leq L_G\sup_{t\in[0, T]}\left\| \tilde{w}_1(t)-\tilde{w}_2(t)\right\|_{H^{2}(\Omega)},\label{Chap5G1}
	\end{align}
	\begin{align}
		\sup_{t\in[0, T]}\left\|\tilde{u}_1(t)-\tilde{u}_2(t)\right\|_{L^{2}(\Omega)}
		&\leq C_o\sup_{t\in[0, T]}\left\|\tilde{v}_1(t)-\tilde{v}_2(t)\right\|_{L^2(\Omega)}\notag\\
		&+C_o^*\sup_{t\in[0, T]}\left\|\tilde{v}_2(t)\right\|_{L^2(\Omega)}\sup_{t\in[0, T]}\left\|\tilde{w}_1(t)-\tilde{w}_2(t)\right\|_{H^2(\Omega)}.\label{Chap5U1}
	\end{align}
	\begin{align}
		\sup_{t\in[0, T]}\left\|[G(\tilde{w}_1)](t)-G(\tilde{w}_0)\right\|_{L^2(\Omega)}\leq\sup_{t\in[0, T]}\left\|[G(\tilde{w}_1)](t)-G(\tilde{w}_0)\right\|_{H^2(\Omega)}\leq L_Gr\label{Chap5G2},
	\end{align}
	\begin{align}
		\sup_{t\in[0, T]}\left\|\tilde{u}(t)\right\|_{L^2(\Omega)}\leq&\sup_{t\in[0, T]}\left\|\tilde{u}(t)\right\|_{H^1(\Omega)}\leq C_o\left[\left\|\tilde{v}_0\right\|_{L^2(\Omega)}+\frac{\kappa}{2C}\right]\label{Chap5U2}.
	\end{align}
	Setting
	\be\label{Chap5L-G}
	L_G^*=L_G+\beta_pC_o+\beta_pC_o^*\left(\left\|\tilde{v}_0\right\|_{L^2(\Omega)}+\frac{\kappa}{2C}\right),
	\ee
	estimates \eqref{Chap5G1}, \eqref{Chap5U1}, \eqref{Chap5G2}, \eqref{Chap5U2} and the bounded property of the strongly continuous semigroup $\{T(t)\in\mathcal{B}\left(L^2(\Omega)\times H_o^2(\Omega)\right):\ t\geq 0\}$ imply
	\begin{align}
		&\sup_{t\in[0, T]}\left\|[\Phi(\tilde{v}_1,\tilde{w}_1)](t)-[\Phi(\tilde{v}_2,\tilde{w}_2)](t)\right\|_{L^2(\Omega)\times H^{2}(\Omega)}\notag\\
		=&\sup_{t\in[0, T]}\left\|\int_0^tT(t-s)\begin{pmatrix}[G(\tilde{w}_1)](s)-[G(\tilde{w}_2)](s)+\beta_p\left[\tilde{u}_1(s)-\tilde{u}_2(s)\right]\\ 0\end{pmatrix}ds\right\|_{L^{2}(\Omega)\times H^{2}(\Omega)}\notag\\
		\leq&TM_0\sup_{t\in[0, T]}\left\|[G(\tilde{w}_1)](t)-[G(\tilde{w}_2)](t)\right\|_{L^{2}(\Omega)}+TM_0\beta_p\sup_{t\in[0, T]}\left\|\tilde{u}_1(t)-\tilde{u}_2(t)\right\|_{L^{2}(\Omega)}\notag\\
		\leq&TM_0L_G^*\sup_{t\in[0, T]}\left\|\begin{pmatrix}\tilde{v}_1(t)-\tilde{v}_2(t)\\ \tilde{w}_1(t)-\tilde{w}_2(t)\end{pmatrix}\right\|_{L^{2}(\Omega)\times H^{2}(\Omega)}\label{Chap5difference1},
	\end{align}
	\begin{align}
		&\sup_{t\in[0, T]}\left\|\left[\Phi(\tilde{v}_1,\tilde{w}_1)\right](t)-\begin{pmatrix}\tilde{v}_0\\ \tilde{w}_0\end{pmatrix}\right\|_{L^2(\Omega)\times H^{2}(\Omega)}\notag\\
		=&\sup_{t\in[0, T]}\left\|T(t)\begin{pmatrix}\tilde{v}_0\\ \tilde{w}_0\end{pmatrix}-\begin{pmatrix}\tilde{v}_0\\ \tilde{w}_0\end{pmatrix}+\int_0^t\left\{T(t-s)\begin{pmatrix}[G(\tilde{w}_1)](s)+\beta_p\tilde{u}(s)\\ 0\end{pmatrix}\right\}ds\right\|_{L^2(\Omega)\times H^{2}(\Omega)}\notag\\
		\leq&\sup_{t\in[0, T]}\left\|T(t)\begin{pmatrix}\tilde{v}_0\\ \tilde{w}_0\end{pmatrix}-\begin{pmatrix}\tilde{v}_0\\ \tilde{w}_0\end{pmatrix}\right\|_{L^2(\Omega)\times H^{2}(\Omega)}+TM_0\sup_{t\in[0, T]}\left\|\tilde{u}(t)\right\|_{L^2(\Omega)}\notag\\
		+& TM_0\sup_{t\in[0, T]}\left\|[G(\tilde{w}_1)](t)-G(\tilde{w}_0)\right\|_{L^2(\Omega)}+ TM_0\left\|G(\tilde{w}_0)\right\|_{ L^{2}(\Omega)}\notag\\
		\leq&\sup_{t\in[0, T]}\left\|T(t)\begin{pmatrix}\tilde{v}_0\\ \tilde{w}_0\end{pmatrix}-\begin{pmatrix}\tilde{v}_0\\ \tilde{w}_0\end{pmatrix}\right\|_{L^2(\Omega)\times H^{2}(\Omega)}+TM_0\left[C_o\left(\left\|\tilde{v}_0\right\|_{L^2(\Omega)}+\frac{\kappa}{2C}\right)+L_G r\right]\notag\\
		+&TM_0\left\|G(\tilde{w}_0)\right\|_{ H^{2}(\Omega)}\label{Chap5estimate of phi-1}.
	\end{align}
	Because $\{T(t)\in\mathcal{B}\left(L^2(\Omega)\times H_o^2(\Omega)\right):\ t\geq 0\}$ is a strongly continuous semigroup,  according to the definition of strong continuity, for $(\tilde{v}_0, \tilde{w}_0)\in D(\mathcal{A})$ and given constant $r\in\left(0, \frac{\kappa}{2C}\right)$, there exists $\delta_o=\delta_o(r)>0$, such that if $0<t\leq\delta_o$, then
	\be\label{Chap5semigp-continuity}
	0<\left\|T(t)\begin{pmatrix}\tilde{v}_0\\ \tilde{w}_0\end{pmatrix}-\begin{pmatrix}\tilde{v}_0\\ \tilde{w}_0\end{pmatrix}\right\|_{L^2(\Omega)\times H^{2}(\Omega)}\leq\frac{r}{2},
	\ee
	Since $r\in\left(0, \frac{\kappa}{2C}\right)$ and $C=C(\Omega)$ is a constant, $\delta_o$ depends on $\kappa$ and $\Omega$, i.e. $\delta_o=\delta_o(\kappa, \Omega)$.
	
	For fixed small $r\in\left(0, \frac{\kappa}{2C}\right)$, then there exists a number $T_0>0$,
	\be\label{Chap5T-0}
	T_0=\min\left\{\delta_o,\ \frac{1}{2M_0L^*_G},\
	\frac{\kappa}{2M_0}\left[\left(L_G+C_o\right)\kappa+2C\left(\left\|G(\tilde{w}_0)\right\|_{ H^{2}(\Omega)}+C_o\left\|\tilde{v}_0\right\|_{L^2(\Omega)}\right)\right]^{-1}\right\},
	\ee
	such that for every $T\in (0,T_0)$, it follows that
	\[\sup_{t\in[0, T]}\left\|\left[\Phi(\tilde{v}_1,\tilde{w}_1)\right](t)-\begin{pmatrix}\tilde{v}_0\\ \tilde{w}_0\end{pmatrix}\right\|_{L^2(\Omega)\times H^{2}(\Omega)}\leq r,\]
	\[\sup_{t\in[0, T]}\|[\Phi(\tilde{v}_1,\tilde{w}_1)](t)-[\Phi(\tilde{v}_2,\tilde{w}_2)](t)\|_{L^2(\Omega)\times H^{2}(\Omega)}
	\leq\frac{1}{2}\sup_{t\in[0, T]}\left\|\begin{pmatrix}\tilde{v}_1(t)-\tilde{v}_2(t)\\ \tilde{w}_1(t)-\tilde{w}_2(t)\end{pmatrix}\right\|_{L^{2}(\Omega)\times H^{2}(\Omega)}.\]
	Hereby $\Phi(\tilde{v}_1,\tilde{w}_1)\in \mathcal{Z}(T)$ for $(\tilde{v}_1,\tilde{w}_1)\in\mathcal{Z}(T)$,  $\Phi(\tilde{v}, \tilde{w})$ is Lipschitz continuous on the bounded set $\mathcal{Z}(T)$ with Lipschitz constant smaller than or equal to $\frac{1}{2}$, and $\Phi(\tilde{v}, \tilde{w})$ is a contractive mapping of $\mathcal{Z}(T)$ into itself.
	
	According to the Banach fixed point theorem, for each $T\in(0, T_0)$, there exists a unique fixed point $(\tilde{v}_T, \tilde{w}_T)\in \mathcal{Z}(T)$, such that $(\tilde{v}_T,\tilde{w}_T)=\Phi(\tilde{v}_T,\tilde{w}_T)$.
	
	Hence, $(\tilde{v}_T, \tilde{w}_T)\in \mathcal{Z}(T)$ is the unique mild solution of the semilinear evolution equation \eqref{Chap54th-SWE-1} on $[0, T]$, and $(\tilde{v}_T,\tilde{w}_T)$ satisfies the integral formulation \eqref{Chap5mild-solu-form}. Due to the uniqueness of the fixed point, we set $(\tilde{v}, \tilde{w})=(\tilde{v}_{T},\tilde{w}_{T})$ and note that $(\tilde{v}_T, \tilde{w}_T)$ is the restriction $(\tilde{v}|_{[0, T]}, \tilde{w}|_{[0, T]})\in\mathcal{Z}(T)$ of $(\tilde{v}, \tilde{w})$. As a result, the assertion is proved.
\end{proof}
\begin{corollary}\label{Chap5Lip-mild-solu}
	Under the assumptions of Theorem \ref{Chap54th-solu-thm}, the mild solution of the semilinear evolution equation \eqref{Chap54th-SWE-1},
	$\left(\tilde{v}, \tilde{w}\right): [0, T]\rightarrow L^2(\Omega)\times H_o^2(\Omega)$,
	defined by the integral form \eqref{Chap5mild-solu-form}, is locally Lipschitz continuous with respect to $t\in[0, T]$, i.e. $\forall\ h\in(0, T]$,
	\be\label{Chap5Lip-mild-solu-inquality}
	\sup_{0\leq t<t+h\leq T}\left\|\begin{pmatrix}\tilde{v}(t+h)-\tilde{v}(t)\\ \tilde{w}(t+h)-\tilde{w}(t)\end{pmatrix}\right\|_{L^2(\Omega)\times H^2(\Omega)}\leq L_Vh.
	\ee
	Here $L_V$ is a Lipschitz constant depending on $\beta_F$, $\beta_p$, $T_0$, $\kappa$, $\Omega$,
	$\|\tilde{w}_0\|_{H^2(\Omega)}$, $\left\|\left(\tilde{v}_0, \tilde{w}_0\right)\right\|_{D(\mathcal{A})}$,  $M_0=\sup_{t\in[0, \infty)}\left\|T(t)\right\|_{\mathcal{B}(L^2(\Omega)\times H_o^2(\Omega))}$.
\end{corollary}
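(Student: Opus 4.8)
The plan is to use the fixed-point identity $(\tilde v,\tilde w)=\Phi(\tilde v,\tilde w)$ from Theorem \ref{Chap54th-solu-thm}, so that $(\tilde v,\tilde w)$ satisfies the Duhamel representation \eqref{Chap5mild-solu-form}, and to estimate its increment over a step $h$ directly. Write $F(s):=\bigl([G(\tilde w)](s)+\beta_p\tilde u(s),\,0\bigr)\in\mathfrak{X}$ with $\tilde u(s)=\mathbf{S}_o(\tilde v(s),\tilde w(s)+\theta_2)$, and $\Phi_0:=(\tilde v_0,\tilde w_0)$. From \eqref{Chap5mild-solu-form},
\[\begin{pmatrix}\tilde v(t+h)\\ \tilde w(t+h)\end{pmatrix}-\begin{pmatrix}\tilde v(t)\\ \tilde w(t)\end{pmatrix}=\bigl[T(t+h)-T(t)\bigr]\Phi_0+\int_0^{t+h}T(t+h-s)F(s)\,ds-\int_0^tT(t-s)F(s)\,ds.\]
Since the hypotheses of Theorem \ref{Chap54th-solu-thm} give $\Phi_0\in D(\mathcal{A})=H_o^2(\Omega)\times H_o^4(\Omega)$, we may write $T(t+h)\Phi_0-T(t)\Phi_0=\int_t^{t+h}T(\sigma)\mathcal{A}\Phi_0\,d\sigma$, whence, with $M_0=\sup_{t\in[0,\infty)}\|T(t)\|_{\mathcal{B}(\mathfrak{X})}$,
\[\bigl\|[T(t+h)-T(t)]\Phi_0\bigr\|_{\mathfrak{X}}\leq M_0\,h\,\|\mathcal{A}\Phi_0\|_{\mathfrak{X}}\leq M_0\,h\,\|(\tilde v_0,\tilde w_0)\|_{D(\mathcal{A})}.\]

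For the two Duhamel integrals the point is that $F(s)$ takes values only in $L^2(\Omega)$, so $\|(T(h)-I)F(s)\|_{\mathfrak{X}}$ cannot be bounded by $h$ directly; instead, substituting $s=\sigma+h$ on the subinterval $[h,t+h]$ we rewrite
\[\int_0^{t+h}T(t+h-s)F(s)\,ds-\int_0^tT(t-s)F(s)\,ds=\int_0^hT(t+h-s)F(s)\,ds+\int_0^tT(t-\sigma)\bigl[F(\sigma+h)-F(\sigma)\bigr]\,d\sigma,\]
so that only the \emph{time increment} of $F$ enters. The first integral on the right is $\leq M_0\,h\,K$ with $K:=\sup_{s\in[0,T]}\|F(s)\|_{\mathfrak{X}}$, and $K<\infty$ depends only on $\beta_F,\beta_p,\kappa,\Omega,\|\tilde w_0\|_{H^2(\Omega)},\|\tilde v_0\|_{L^2(\Omega)}$ by \eqref{Chap5U2}, \eqref{Chap5G2} and the bound \eqref{C-a} of Corollary \ref{estimates} applied to $G(\tilde w_0)$. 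For the second integral we estimate, using \eqref{Holdercontinuous} for $G$, the time-increment bound \eqref{Chap5time-Lip-con-sol4} for $\mathbf{S}_o$ (with $v=\tilde v$, $w=\tilde w+\theta_2$, so the constant $\theta_2$ cancels in $\tilde w(\sigma+h)-\tilde w(\sigma)$, and with $\tilde w(\sigma)+\theta_2\in B_r(w_0)$ since $(\tilde v,\tilde w)\in\mathcal{Z}(T)$), the embeddings $H^2(\Omega)\hookrightarrow L^2(\Omega)\hookrightarrow H^{-1}(\Omega)$, and the uniform bound $\|\tilde v(\sigma)\|_{H^{-1}(\Omega)}\leq C(\|\tilde v_0\|_{L^2(\Omega)}+r)$:
\[\|F(\sigma+h)-F(\sigma)\|_{\mathfrak{X}}\leq C_F\left\|\begin{pmatrix}\tilde v(\sigma+h)-\tilde v(\sigma)\\ \tilde w(\sigma+h)-\tilde w(\sigma)\end{pmatrix}\right\|_{L^2(\Omega)\times H^2(\Omega)},\]
with $C_F$ a constant of the same type as $L_G^*$ in \eqref{Chap5L-G}.

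Put $\omega(h):=\sup_{0\leq t<t+h\leq T}\bigl\|\bigl(\tilde v(t+h)-\tilde v(t),\,\tilde w(t+h)-\tilde w(t)\bigr)\bigr\|_{L^2(\Omega)\times H^2(\Omega)}$, which is finite (indeed $\omega(h)\leq2r$) because $(\tilde v,\tilde w)\in\mathcal{Z}(T)$, cf. \eqref{Chap5ini-NBD}. Collecting the three bounds above, and using $\|T(t-\sigma)\|_{\mathcal{B}(\mathfrak{X})}\leq M_0$ in the last integral, gives
\[\omega(h)\leq M_0\bigl(\|(\tilde v_0,\tilde w_0)\|_{D(\mathcal{A})}+K\bigr)\,h+M_0\,C_F\,T\,\omega(h).\]
By the choice of $T_0$ in \eqref{Chap5T-0} — shrinking $T_0$ further, if needed, so that $M_0C_FT_0\leq\frac12$, which is automatic once $C_F\leq L_G^*$, cf. \eqref{Chap5L-G} — we have $M_0C_FT\leq\frac12$ for every $T\in(0,T_0)$, so the last term is absorbed into the left-hand side. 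This yields $\omega(h)\leq2M_0\bigl(\|(\tilde v_0,\tilde w_0)\|_{D(\mathcal{A})}+K\bigr)h=:L_Vh$, i.e. \eqref{Chap5Lip-mild-solu-inquality}, with $L_V$ depending only on the quantities listed. The main obstacle is precisely this absorption step: since $F(s)\notin D(\mathcal{A})$ in general, one cannot differentiate the convolution, and is forced into the self-referential (Gronwall-type) inequality above, which closes only because of the smallness of $T_0$ already built into \eqref{Chap5T-0}.
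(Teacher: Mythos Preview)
Your proof is correct and follows essentially the same decomposition as the paper: both split the Duhamel increment into the semigroup piece $\int_0^h T(t+s)\mathcal{A}\Phi_0\,ds$, the short integral $\int_0^h T(t+h-s)F(s)\,ds$, and the increment integral $\int_0^t T(t-\sigma)[F(\sigma+h)-F(\sigma)]\,d\sigma$, and both bound $\|F(\sigma+h)-F(\sigma)\|$ via the Lipschitz estimates \eqref{Holdercontinuous} and \eqref{Chap5time-Lip-con-sol4}.

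The one genuine difference is in how the self-referential inequality is closed. You take the supremum over $t$ and absorb the term $M_0C_FT\,\omega(h)$ into the left-hand side using the smallness $M_0L_G^*T_0\leq\tfrac12$ already built into \eqref{Chap5T-0}; this is clean and avoids any auxiliary lemma. The paper instead keeps the pointwise-in-$t$ inequality
\[\left\|\begin{pmatrix}\tilde v(t+h)-\tilde v(t)\\ \tilde w(t+h)-\tilde w(t)\end{pmatrix}\right\|\leq hM_0\bigl(\|\Phi_0\|_{D(\mathcal{A})}+C_\theta\bigr)+C_\beta\int_0^t\left\|\begin{pmatrix}\tilde v(s+h)-\tilde v(s)\\ \tilde w(s+h)-\tilde w(s)\end{pmatrix}\right\|ds\]
and applies Gronwall, obtaining $L_V=M_0\bigl(\|\Phi_0\|_{D(\mathcal{A})}+C_\theta\bigr)e^{M_0L_GT_0}$. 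Your absorption argument is more elementary and yields a slightly sharper constant, but it leans on the specific choice of $T_0$; the paper's Gronwall route would work on any fixed interval, at the price of an exponential factor in the constant. Either closure is entirely standard here.
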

\begin{proof}
	Take $0\leq t<t+h\leq T$. Equation \eqref{Chap5mild-solu-form} leads to
	\begin{align}
		\begin{pmatrix}\tilde{v}(t+h)-\tilde{v}(t)\\ \tilde{w}(t+h)-\tilde{w}(t)\end{pmatrix}=&T(t)\left[T(h)\begin{pmatrix}\tilde{v}_0\\ \tilde{w}_0\end{pmatrix}-\begin{pmatrix}\tilde{v}_0\\ \tilde{w}_0\end{pmatrix}\right]+\int_0^hT(t+h-s)\begin{pmatrix}[G(\tilde{w})](s)+\beta_p\tilde{u}(s)\\ 0\end{pmatrix}ds\notag\\
		+&\int_0^tT(t-s)\begin{pmatrix}[G(\tilde{w})](s+h)-[G(\tilde{w})](s)+\beta_p[\tilde{u}(s+h)-\tilde{u}(s)]\\ 0\end{pmatrix}ds\notag\\
		=& \int_0^hT(t+s)\mathcal{A}\begin{pmatrix}\tilde{v}_0\\ \tilde{w}_0\end{pmatrix}ds+\int_0^hT(t+h-s)\begin{pmatrix}[G(\tilde{w})](s)+\beta_p\tilde{u}(s)\\ 0\end{pmatrix}ds\notag\\
		+&\int_0^tT(t-s)\begin{pmatrix}[G(\tilde{w})](s+h)-[G(\tilde{w})](s)+\beta_p[\tilde{u}(s+h)-\tilde{u}(s)]\\ 0\end{pmatrix}ds.\label{Chap5diff-mild-solution}
	\end{align}
	Notice that
	\be\label{Chap5Lip-est-1}
	M_0=\sup_{t\in[0, \infty)}\left\|T(t)\right\|_{\mathcal{B}\left(L^2(\Omega)\times H_o^2(\Omega)\right)}, \quad \left\|\mathcal{A}\begin{pmatrix}\tilde{v}_0\\ \tilde{w}_0\end{pmatrix}\right\|_{L^2(\Omega)\times H^2(\Omega)}\leq \left\|\left(\tilde{v}_0, \tilde{w}_0\right)\right\|_{D(\mathcal{A})}.
	\ee
	Because the semilinear evolution equation \eqref{Chap54th-SWE-1} has a unique mild solution $\left(\tilde{v}, \tilde{w}\right)\in \mathcal{Z}(T)$, by using the estimate \eqref{Lip-G-1} in Corollary \ref{Lip-G-Lem}, we have
	\begin{align}
		\sup_{t\in[0, T]}\left\|\begin{pmatrix}[G(\tilde{w})](t)+\beta_p\tilde{u}(t)\\ 0\end{pmatrix}\right\|_{L^2(\Omega)\times H^2(\Omega)}
		&\leq \sup_{t\in[0, T]}\left\|[G(\tilde{w})](t)+\beta_p\tilde{u}(t)\right\|_{H^2(\Omega)}\notag\\
		&\leq \sup_{t\in[0, T]}\left\|[G(\tilde{w})](t)-G(\tilde{w}_0)\right\|_{H^2(\Omega)}\notag\\
		&+\left\|G(\tilde{w}_0)\right\|_{H^{2}(\Omega)}+\beta_p\sup_{t\in[0, T]}\left\|\tilde{u}(t)\right\|_{H^2(\Omega)}\notag\\
		&\leq L_G r+\left\|G(\tilde{w}_0)\right\|_{H^{2}(\Omega)}+\beta_pC_o\sup_{t\in[0, T]}\left\|\tilde{v}(t)\right\|_{L^{2}(\Omega)}\notag\\
		&\leq C_\theta\label{Chap5Lip-est-2}.
	\end{align}
	Here $C_\theta=\frac{\kappa L_G}{2C}+\left\|G(\tilde{w}_0)\right\|_{H^{2}(\Omega)}+\beta_pC_o\left[\left\|\tilde{v}_0\right\|_{L^2(\Omega)}+\frac{\kappa}{2C}\right]$ and $L_G$ is given by Corollary \ref{Lip-G-Lem}.
	
	As $\tilde{u}=\mathbf{S}_o\left(\tilde{v}, \tilde{w}+\theta_2\right)\in C\left([0, T]; H_0^1(\Omega)\right)$ and \eqref{Chap5time-Lip-con-sol4}, then, $\forall\ 0\leq s< s+h \leq t\leq T$,
	\begin{align}
		\sup_{t\in[0, T]}\int_0^t\left\|\tilde{u}(s+h)-\tilde{u}(s)\right\|_{L^2(\Omega)}ds\leq& \sup_{t\in[0, T]}\int_0^t\left\|\tilde{u}(s+h)-\tilde{u}(s)\right\|_{H^1(\Omega)}ds\notag\\
		\leq&C_\alpha\sup_{t\in[0, T]}\int_0^t\left\|\begin{pmatrix}\tilde{v}(s+h)-\tilde{v}(s)\\  \tilde{w}(s+h)-\tilde{w}(s)\end{pmatrix}\right\|_{L^2(\Omega)\times H^2(\Omega)}ds\label{Chap5Holderinu}.
	\end{align}
	Here, $C_\alpha=C_o^*\left(\left\|\tilde{v}_0\right\|_{L^2(\Omega)}+\frac{\kappa}{2C}\right)+C_o$.
	
	As $\tilde{w}(s+h)$, $\tilde{w}(s)\in H_o^2(\Omega)$, $\forall\ 0\leq s< s+h \leq t\leq T$, $[G(\tilde{w})](s+h)-[G(\tilde{w})](s)\in H^2(\Omega)$, according to inequality \eqref{Holdercontinuous} of Corollary \ref{Lip-G-Lem}, setting $C_\beta=\beta_pC_\alpha+M_0L_G$, we obtain
	\begin{align}
		&\left\|\int_0^tT(t-s)\begin{pmatrix}[G(\tilde{w})](s+h)-[G(\tilde{w})](s)+\beta_p[\tilde{u}(s+h)-\tilde{u}(s)]\\ 0\end{pmatrix}ds\right\|_{L^2(\Omega)\times H^2(\Omega)}\notag\\
		=& M_0\int_0^t\left\|[G(\tilde{w})](s+h)-[G(\tilde{w})](s)+\beta_p[\tilde{u}(s+h)-\tilde{u}(s)]\right\|_{L^2(\Omega)}ds\notag\\
		\leq& M_0\int_0^t\beta_p\left\|\tilde{u}(s+h)-\tilde{u}(s)\right\|_{L^2(\Omega)}
		+\left\|[G(\tilde{w})](s+h)-[G(\tilde{w})](s)\right\|_{H^2(\Omega)}ds\notag\\
		\leq& C_\beta\int_0^t\left\|\begin{pmatrix}\tilde{v}(s+h)-\tilde{v}(s)\\  \tilde{w}(s+h)-\tilde{w}(s)\end{pmatrix}\right\|_{L^2(\Omega)\times H^2(\Omega)}ds\label{Chap5diff-mild-solution-1}.
	\end{align}
	Combining \eqref{Chap5diff-mild-solution}, \eqref{Chap5Lip-est-1}, \eqref{Chap5Lip-est-2} and \eqref{Chap5diff-mild-solution-1} gives
	\begin{align}
		\left\|\begin{pmatrix}\tilde{v}(t+h)-\tilde{v}(t)\\ \tilde{w}(t+h)-\tilde{w}(t)\end{pmatrix}\right\|_{L^2(\Omega)\times H^2(\Omega)}&\leq hM_0\left\|\left(\tilde{v}_0, \tilde{w}_0\right)\right\|_{D(\mathcal{A})}+hM_0C_\theta\notag\\
		&+C_\beta\int_0^t\left\|\begin{pmatrix}\tilde{v}(s+h)-\tilde{v}(s)\\  \tilde{w}(s+h)-\tilde{w}(s)\end{pmatrix}\right\|_{L^2(\Omega)\times H^2(\Omega)}ds\label{Chap5diff-mild-solution-2}.
	\end{align}
	Gronwall's inequality then implies that
	\[\left\|\begin{pmatrix}\tilde{v}(t+h)-\tilde{v}(t)\\ \tilde{w}(t+h)-\tilde{w}(t)\end{pmatrix}\right\|_{L^2(\Omega)\times H^2(\Omega)}\leq M_0\left(\left\|\left(\tilde{v}_0, \tilde{w}_0\right)\right\|_{D(\mathcal{A})}+C_\theta\right)\left(e^{M_0L_GT_0}\right)h.\]
	Thus, \eqref{Chap5Lip-mild-solu-inquality} holds for all $h\in(0, T]$ by setting $L_V=M_0\left[\left\|\left(\tilde{v}_0, \tilde{w}_0\right)\right\|_{D(\mathcal{A})}+C_\theta\right]\left[e^{M_0L_GT_0}\right].$
\end{proof}

\begin{theorem}\label{Chap54th-mild-solution-cor}
	Let $\tilde{v}_0\in H^2_o(\Omega)$ and $\tilde{w}_0\in H^4_o(\Omega)$. Then the mild solution $\left(\tilde{v}, \tilde{w}\right)$ of the semilinear evolution equation \eqref{Chap54th-SWE-1}, defined by the integral form \eqref{Chap5mild-solu-form}, is the strict solution of   equation \eqref{Chap54th-SWE-1} and
	\[\left(\tilde{v}, \tilde{w}\right)\in C^1\left([0, T_0); L^2(\Omega)\times H_o^2(\Omega)\right)\cap C\left([0, T_0); H_o^2(\Omega)\times H_o^4(\Omega)\right).\]
\end{theorem}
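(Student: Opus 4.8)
The plan is to bootstrap: from the mild solution $(\tilde v,\tilde w)$ and its Lipschitz-in-time regularity (Corollary~\ref{Chap5Lip-mild-solu}) I first obtain $C^1$-regularity in $t$ by identifying the time derivative with the mild solution of a linearised companion equation, and then invoke Lemma~\ref{IEE-S} to turn the mild solution into a strict one. Write $\Phi=(\tilde v,\tilde w)$, $\Phi_0=(\tilde v_0,\tilde w_0)\in D(\mathcal A)$, $\mathfrak X=L^2(\Omega)\times H_o^2(\Omega)$, and $\mathcal G(t)=\big([G(\tilde w)](t)+\beta_p\tilde u(t),\,0\big)$ with $\tilde u(t)=\mathbf S_o(\tilde v(t),\tilde w(t)+\theta_2)$, so that $\Phi$ obeys \eqref{Chap5mild-solu-form}; fix $T\in(0,T_0)$ (by uniqueness, solutions on nested intervals are restrictions of each other, so it is enough to argue on $[0,T]$). \textbf{Step 1 (linearised companion equation).} Introduce $L(s)\in\mathcal B(\mathfrak X,L^2(\Omega))$ by $L(s)(a,b):=[G'(\tilde w(s))]b+\beta_p\big([D_v\mathbf S_o(\tilde v(s),\tilde w(s)+\theta_2)]a+[D_w\mathbf S_o(\tilde v(s),\tilde w(s)+\theta_2)]b\big)$. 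By \eqref{Lip-G-2}, \eqref{Chap5Bound-of-Fre-D1}, \eqref{Chap5Bound-of-Fre-D2} one has $\ell:=\sup_{s\in[0,T]}\|L(s)\|_{\mathcal B(\mathfrak X,L^2(\Omega))}<\infty$, and $s\mapsto L(s)$ is strongly continuous because $\Phi$ is continuous in $t$ and $G'$, $D_v\mathbf S_o$, $D_w\mathbf S_o$ depend continuously on their arguments. Hence the linear problem $\Psi'(t)=\mathcal A\Psi(t)+(L(t)\Psi(t),0)$, $\Psi(0)=\mathcal A\Phi_0+\mathcal G(0)\in\mathfrak X$, has a unique mild solution $\Psi=(p,q)\in C([0,T];\mathfrak X)$ — a routine contraction argument, as in Theorem~\ref{Chap54th-solu-thm}, or the bounded-perturbation theorem.

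\textbf{Step 2 (the difference quotient converges to $\Psi$).} For small $h>0$ put $D_h(t):=h^{-1}\big(\Phi(t+h)-\Phi(t)\big)$; by Corollary~\ref{Chap5Lip-mild-solu}, $\sup_t\|D_h(t)\|_{\mathfrak X}\le L_V$. Subtracting \eqref{Chap5mild-solu-form} at $t+h$ and at $t$ (as in \eqref{Chap5diff-mild-solution}) gives
\[
D_h(t)=A_h(t)+\int_0^tT(t-s)\Big(h^{-1}\big[\mathcal G(s+h)-\mathcal G(s)\big]\Big)\,ds,
\]
where $A_h(t)=h^{-1}T(t)\big[T(h)\Phi_0-\Phi_0\big]+h^{-1}\int_0^hT(t+h-s)\mathcal G(s)\,ds\to T(t)\Psi(0)$ uniformly on $[0,T]$ as $h\to0^+$ (using $\Phi_0\in D(\mathcal A)$, continuity of $\mathcal G$, and strong continuity of $T$). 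Expanding the increment of $\mathcal G$ by the fundamental theorem of calculus for the Fréchet-differentiable maps $G$ and $\mathbf S_o$, and replacing $G'$, $D_v\mathbf S_o$, $D_w\mathbf S_o$ evaluated along the segment joining $\tilde w(s)$ to $\tilde w(s+h)$ (respectively joining $(\tilde v(s),\tilde w(s))$ to $(\tilde v(s+h),\tilde w(s+h))$) by their values at $\tilde w(s)$ (respectively $(\tilde v(s),\tilde w(s))$), one arrives at
\[
h^{-1}\big[\mathcal G(s+h)-\mathcal G(s)\big]=\big(L(s)D_h(s),\,0\big)+\big(\varepsilon_h(s),\,0\big),\qquad \sup_{s\in[0,T]}\|\varepsilon_h(s)\|_{L^2(\Omega)}\xrightarrow{\;h\to0^+\;}0 .
\]
Comparing with the integral equation for $\Psi$ and using $\|L(s)\|\le\ell$, $\|T\|\le M_0$,
\[
\|D_h(t)-\Psi(t)\|_{\mathfrak X}\le\eta_h+M_0\ell\int_0^t\|D_h(s)-\Psi(s)\|_{\mathfrak X}\,ds,\qquad \eta_h\to0,
\]
so Gronwall's inequality gives $\sup_{t\in[0,T]}\|D_h(t)-\Psi(t)\|_{\mathfrak X}\le\eta_he^{M_0\ell T}\to0$. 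Thus $\Phi$ is right-differentiable with continuous right derivative $\Psi$, and Lemma~\ref{time-derivative-continuity} yields $\Phi\in C^1([0,T];\mathfrak X)$ with $\Phi'=\Psi$.

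\textbf{Step 3 (upgrade to a strict solution).} Since $\Phi=(\tilde v,\tilde w)\in C^1([0,T];\mathfrak X)$, in particular $\tilde v\in C^1([0,T];L^2(\Omega))$ and $\tilde w\in C^1([0,T];H^2(\Omega))$; chaining with the continuously Fréchet-differentiable maps $G$ and $\mathbf S_o$ gives $[G(\tilde w)](\cdot)\in C^1([0,T];H^2(\Omega))$ and $\tilde u(\cdot)=\mathbf S_o(\tilde v(\cdot),\tilde w(\cdot)+\theta_2)\in C^1([0,T];H^1(\Omega))$, hence $\mathcal G\in C^1([0,T];\mathfrak X)$. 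Applying Lemma~\ref{IEE-S} to $\Phi'=\mathcal A\Phi+\mathcal G$ with $\Phi_0\in D(\mathcal A)$, the mild solution $\Phi$ is a strict solution: $\Phi\in C([0,T];D(\mathcal A))\cap C^1([0,T];\mathfrak X)$ and $\Phi$ satisfies \eqref{Chap54th-SWE-1}. Since $D(\mathcal A)=H_o^2(\Omega)\times H_o^4(\Omega)$ and $T\in(0,T_0)$ was arbitrary, $(\tilde v,\tilde w)\in C^1\big([0,T_0);L^2(\Omega)\times H_o^2(\Omega)\big)\cap C\big([0,T_0);H_o^2(\Omega)\times H_o^4(\Omega)\big)$, as claimed.

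The delicate point is Step~2: one must show, uniformly in $s$ and $t$, that the first-order Taylor remainders of $G$ and of $\mathbf S_o$ along the solution path contribute only a term $\varepsilon_h$ with $\sup_s\|\varepsilon_h(s)\|_{L^2(\Omega)}\to0$, and this is exactly what the uniform-continuity estimates \eqref{uniformly-continuous-Fre-G}, \eqref{Chap5Fre-uniformly-continuity-v}, \eqref{Chap5Fre-uniformly-continuity-w} deliver once the difference quotients are controlled by $L_V$; only then does the Gronwall comparison close. Steps~1 and~3 are, by contrast, routine given the estimates established in the previous sections.
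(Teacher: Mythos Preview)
Your proposal is correct and follows essentially the same strategy as the paper: set up the linearised companion equation whose mild solution $\Psi$ is the candidate time derivative, show via a Gronwall argument that the difference quotients of $\Phi$ converge to $\Psi$ (controlling Taylor remainders with \eqref{uniformly-continuous-Fre-G}, \eqref{Chap5Fre-uniformly-continuity-v}, \eqref{Chap5Fre-uniformly-continuity-w} and the Lipschitz bound $L_V$), deduce $\Phi\in C^1$ by Lemma~\ref{time-derivative-continuity}, and then invoke Lemma~\ref{IEE-S} with $\mathcal G\in C^1$ to upgrade to a strict solution. The paper carries out the same steps with a more explicit decomposition $E(h,t)=E^{(1)}+E^{(2)}+E^{(3)}$ and $E^{(3)}=E^{(3)}_1+E^{(3)}_2+E^{(3)}_3$, but the logical content is identical to your Steps~1--3.
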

\begin{proof}
	Let $T\in(0, T_0)$ and $\left(\tilde{v}, \tilde{w}\right)$ be the mild solution of the semilinear evolution equation \eqref{Chap54th-SWE-1} defined by \eqref{Chap5mild-solu-form}. Set $\tilde{u}(t)=\mathbf{S}_o\left(\tilde{v}(t), \tilde{w}(t)+\theta_2\right)$. We first prove the linear non-autonomous problem
	\be\label{Chap5LNP}
	\begin{pmatrix}\tilde{p}(t)\\ \tilde{q}(t)\end{pmatrix}=T(t)\left(\begin{pmatrix}G_0\\ 0\end{pmatrix}+\mathcal{A}\begin{pmatrix}\tilde{v}_0\\ \tilde{w}_0\end{pmatrix}\right)+\int_0^tT(t-s)\begin{pmatrix}[\mathcal{H}_1(\tilde{q})](s)+\mathcal{H}_2(\tilde{p},\tilde{q})](s)\\ 0\end{pmatrix}ds,
	\ee
	can be solved for all $t\in[0, T]$. Here $G_0=G(\tilde{w}_0)+\beta_p\tilde{u}_0$, $G(\tilde{w}_0)=[G(\tilde{w})](0)$,
	\[\tilde{u}_0=\tilde{u}(0)=\mathbf{S}_o\left(\tilde{v}(0), \tilde{w}(0)+\theta_2\right)=\mathbf{S}_o(v_0, w_0),\]
	\be\label{Chap5H-nonlinearity1}
	[\mathcal{H}_1(\tilde{q})](s)=\frac{2\beta_F\tilde{q}(s)}{\left[\tilde{w}(s)+\theta_2\right]^{3}}=[G'(\tilde{w}(s))]q(s)=[G'(\tilde{w})q](s),\quad s\in [0, t],
	\ee
	\be\label{Chap5H-nonlinearity2}
	[\mathcal{H}_2(\tilde{p}, \tilde{q})](s)=[D_v\mathbf{S}_o\left(v(s), w(s)\right)]\tilde{p}(s)+[D_w\mathbf{S}_o\left(v(s), w(s)\right)]\tilde{q}(s),\quad s\in [0, t].
	\ee
	We define a nonlinear operator $\Psi$ by
	\[\Psi:\ C\left([0, T]; L^2(\Omega)\times H_o^2(\Omega)\right)\longrightarrow C\left([0, T]; L^2(\Omega)\times H_o^2(\Omega)\right),\]
	\[\left[\Psi\left(\tilde{p}, \tilde{q}\right)\right](t)=T(t)\left(\begin{pmatrix}G_0\\ 0\end{pmatrix}+\mathcal{A}\begin{pmatrix}\tilde{v}_0\\ \tilde{w}_0\end{pmatrix}\right)+\int_0^tT(t-s)\begin{pmatrix}[\mathcal{H}_1(\tilde{q})](s)+[\mathcal{H}_2(\tilde{p}, \tilde{q})](s)\\ 0\end{pmatrix}ds.\]
	For any $\left(\tilde{p}_1, \tilde{q}_1\right),\ \left(\tilde{p}_2, \tilde{q}_2\right)\in C\left([0, T]; L^2(\Omega)\times H_o^2(\Omega)\right),\ \tilde{w}\in C\left([0, T]; H_o^2(\Omega)\right)$, then
	\[\mathcal{H}_1(\tilde{q}_1)-\mathcal{H}_1(\tilde{q}_2)=\frac{2\beta_F}{\left[\tilde{w}+\theta_2\right]^{3}}\left[\tilde{q}_1-\tilde{q}_2\right]=G'(\tilde{w})(q_1-q_2)\in C\left([0, T]; H_o^2(\Omega)\right).\]
	\begin{align}
		\mathcal{H}_2(\tilde{p}_1, \tilde{q}_1)-\mathcal{H}_2(\tilde{p}_2, \tilde{q}_2)=&\beta_p\left([D_v\mathbf{S}_o\left(\tilde{v}, \tilde{w}+\theta_2\right)]\tilde{p}_1-[D_v\mathbf{S}_o\left(\tilde{v}, \tilde{w}+\theta_2\right)]\tilde{p}_2\right)\notag\\
		+&\beta_p\left([D_w\mathbf{S}_o\left(\tilde{v}, \tilde{w}+\theta_2\right)]\tilde{q}_1-[D_w\mathbf{S}_o\left(\tilde{v}, \tilde{w}+\theta_2\right)]\tilde{q}_2\right)\notag\\
		\in&C\left([0, T]; H_0^1(\Omega)\right)\notag.
	\end{align}
	Hence,  the estimate \eqref{Lip-G-2} of Fr\'{e}chet derivative $G'\left(\tilde{w}\right)q$ from Corollary \ref{Lip-G-Lem} implies
	\begin{align}
		\sup_{t\in[0, T]}\left\|[\mathcal{H}_1(\tilde{q}_1)](t)-[\mathcal{H}_1(\tilde{q}_2)](t)\right\|_{L^2(\Omega)}=&\sup_{t\in[0, T]}\left\|[G'(\tilde{w})(\tilde{q}_1-\tilde{q}_2)](t)\right\|_{H^2(\Omega)}\notag\\
		\leq&L_G\sup_{t\in[0, T]}\left\|\begin{pmatrix}\tilde{p}_1(t)-\tilde{p}_2(t)\\ \tilde{q}_1(t)-\tilde{q}_2(t)\end{pmatrix}\right\|_{L^2(\Omega)\times H^2(\Omega)}\label{Chap5G3}.
	\end{align}
	Estimates \eqref{Chap5Fre-Lip-continuity-v}, \eqref{Chap5Fre-Lip-continuity-w} from Lemma \ref{Chap5Fre-Lip-continuity} give
	\begin{align}
		&\sup_{t\in[0, T]}\left\|[\mathcal{H}_2(\tilde{p}_1, \tilde{q}_1)](t)-[\mathcal{H}_2(\tilde{p}_2, \tilde{q}_2)](t)\right\|_{L^2(\Omega)}\notag\\
		\leq&\beta_p\left(C_o\sup_{t\in[0, T]}\left\|\tilde{p}_1(t)-\tilde{p}_2(t)\right\|_{L^2(\Omega)}+C_o^*\sup_{t\in[0, T]}\left\|\tilde{v}(t)\right\|_{L^2(\Omega)}\sup_{t\in[0, T]}\left\|\tilde{q}_1(t)-\tilde{q}_2(t)\right\|_{H^2(\Omega)}\right)\notag\\
		\leq&\beta_p\left[C_o+C_o^*\left(\left\|\tilde{v}_0\right\|_{L^2(\Omega)}+\frac{\kappa}{2C}\right)\right]\sup_{t\in[0, T]}\left\|\begin{pmatrix}\tilde{p}_1(t)-\tilde{p}_2(t)\\ \tilde{q}_1(t)-\tilde{q}_2(t)\end{pmatrix}\right\|_{L^2(\Omega)\times H^2(\Omega)}\label{Chap5U3}.
	\end{align}
	Recall $M_0=\sup_{t\in[0, \infty)}\left\|T(t)\right\|_{\mathcal{B}\left(L^2(\Omega)\times H_o^2(\Omega)\right)}$. The definition \eqref{Chap5T-0} of $T_0$, the definition \eqref{Chap5L-G} of $L_G^*$, \eqref{Chap5G3} and \eqref{Chap5U3} imply $\Psi$ is a contractive mapping on $C\left([0, T]; L^2(\Omega)\times H_o^2(\Omega)\right)$ by
	\begin{align}
		&\left\|\left[\Psi\left(\tilde{p}_1, \tilde{q}_1\right)\right](t)-\left[\Psi\left(\tilde{p}_2, \tilde{q}_2\right)\right](t)\right\|_{L^2(\Omega)\times H^2(\Omega)}\notag\\
		=&\left\|\int_0^tT(t-s)\begin{pmatrix}[\mathcal{H}_1(\tilde{q}_1)](s)-[\mathcal{H}_1(\tilde{q}_2)](s)+\mathcal{H}_2(\tilde{p}_1, \tilde{q}_1)](s)-\mathcal{H}_2(\tilde{p}_2, \tilde{q}_2)](s)\\ 0\end{pmatrix}ds\right\|_{L^2(\Omega)\times H^2(\Omega)}\notag\\
		\leq& TM_0\sup_{t\in[0, T]}\left\{\left\|[\mathcal{H}_1(\tilde{q}_1)](t)-[\mathcal{H}_1(\tilde{q}_2)](t)\right\|_{L^2(\Omega)}+\left\|[\mathcal{H}_2(\tilde{p}_1, \tilde{q}_1)](t)-[\mathcal{H}_2(\tilde{p}_2, \tilde{q}_2)](t)\right\|_{L^2(\Omega)}\right\}\notag\\
		\leq&TM_0L_G^*\sup_{t\in[0, T]}\left\|\begin{pmatrix}\tilde{p}_1(t)-\tilde{p}_2(t)\\ \tilde{q}_1(t)-\tilde{q}_2(t)\end{pmatrix}\right\|_{L^2(\Omega)\times H^2(\Omega)}\leq\frac{1}{2}\sup_{t\in[0, T]}\left\|\begin{pmatrix}\tilde{p}_1(t)-\tilde{p}_2(t)\\ \tilde{q}_1(t)-\tilde{q}_2(t)\end{pmatrix}\right\|_{L^2(\Omega)\times H^2(\Omega)}\notag.
	\end{align}
	According to the Banach fixed point theorem, there exists a unique fixed point $(\tilde{p}, \tilde{q})$ in $C\left([0, T]; L^2(\Omega)\times H_o^2(\Omega)\right)$, such that $(\tilde{p}, \tilde{q})=\Psi(\tilde{p}, \tilde{q})$. Hereby, the $\mathbb{R}$-linear non-autonomous problem \eqref{Chap5LNP} can be solved for $t\in[0, T]$.
	
	We next prove that $(\tilde{p}, \tilde{q})$ is the time derivative of the mild solution $(\tilde{v}, \tilde{w})$. Recall $\tilde{u}(t)=\mathbf{S}_o\left(\tilde{v}(t), \tilde{w}(t)+\theta_2\right)$, $\forall\ t\in[0, T]$, $G_0=G(\tilde{w}_0)+\beta_p\tilde{u}_0=[G(\tilde{w})](0)+\beta_p\tilde{u}(0)$, and let $0\leq t<t+h\leq T$ for some $h\in(0, T]$, equations \eqref{Chap5mild-solu-form} and \eqref{Chap5LNP} imply that
	\begin{align}
		E(h,t)&:=\frac{1}{h}\begin{pmatrix}\tilde{v}(t+h)-\tilde{v}(t)\\ \tilde{w}(t+h)-\tilde{w}(t)\end{pmatrix}-\begin{pmatrix}\tilde{p}(t)\\ \tilde{q}(t)\end{pmatrix}\notag\\
		&=T(t)\frac{1}{h}\left(T(h)-I\right)\begin{pmatrix}\tilde{v}_0\\ \tilde{w}_0\end{pmatrix}-T(t)\mathcal{A}\begin{pmatrix}\tilde{v}_0\\ \tilde{w}_0\end{pmatrix}\notag\\
		&+\frac{1}{h}\int_0^h\left\{T(t+h-s)\begin{pmatrix}[G(\tilde{w})](s)+\beta_p\tilde{u}(s)\\ 0\end{pmatrix}\right\}ds-T(t)\begin{pmatrix}G_0\\ 0\end{pmatrix}\notag\\
		&+\int_0^tT(t-s)\begin{pmatrix}\frac{1}{h}\left\{[G(\tilde{w})](s+h)-[G(\tilde{w})](s)\right\}-[\mathcal{H}_1(\tilde{q})](s)\\ 0\end{pmatrix}ds\notag\\
		&+\int_0^tT(t-s)\begin{pmatrix}\beta_p\left[\frac{1}{h}\left\{\tilde{u}(s+h)-\tilde{u}(s)\right\}-[\mathcal{H}_2(\tilde{p}, \tilde{q})](s)\right]\\ 0\end{pmatrix}ds\notag.
	\end{align}
	Let
	\[E^{(1)}(h,t):=T(t)\frac{1}{h}\left(T(h)-I\right)\begin{pmatrix}\tilde{v}_0\\ \tilde{w}_0\end{pmatrix}-T(t)\mathcal{A}\begin{pmatrix}\tilde{v}_0\\ \tilde{w}_0\end{pmatrix},\]
	\[E^{(2)}(h,t):=\frac{1}{h}\int_0^h\left\{T(t+h-s)\begin{pmatrix}[G(\tilde{w})](s)+\beta_p\tilde{u}(s)\\ 0\end{pmatrix}\right\}ds-T(t)\begin{pmatrix}G_0\\ 0\end{pmatrix},\]
	\begin{align}
		E^{(3)}(h,t)&:=\int_0^tT(t-s)\begin{pmatrix}\frac{1}{h}\left\{[G(\tilde{w})](s+h)-[G(\tilde{w})](s)\right\}-[\mathcal{H}_1(\tilde{q})](s)\\ 0\end{pmatrix}ds\notag\\
		&+\int_0^tT(t-s)\begin{pmatrix}\beta_p\left[\frac{1}{h}\left\{\tilde{u}(s+h)-\tilde{u}(s)\right\}-[\mathcal{H}_2(\tilde{p}, \tilde{q})](s)\right]\\ 0\end{pmatrix}ds\notag.
	\end{align}
	We initially notice that
	\begin{align}
		\lim_{h\rightarrow 0}\left\|E^{(1)}(h,t)\right\|_{L^2(\Omega)\times H^2(\Omega)}&\leq \lim_{h\rightarrow 0} M_0\left\|\frac{1}{h}\left(T(h)-I\right)\begin{pmatrix}\tilde{v}_0\\ \tilde{w}_0\end{pmatrix}-\mathcal{A}\begin{pmatrix}\tilde{v}_0\\ \tilde{w}_0\end{pmatrix}\right\|_{L^2(\Omega)\times H^2(\Omega)}\notag\\
		&:=\lim_{h\rightarrow 0}\Lambda_1(h)=0.\notag
	\end{align}
	\[\lim_{h\rightarrow 0}\frac{1}{h}\int_0^h\left\{T(h-s)\begin{pmatrix}G_0 \\ 0\end{pmatrix}\right\}ds=\begin{pmatrix}G_0 \\ 0\end{pmatrix}.\]
	Because $ G(\tilde{w})\in C([0, T]; H^2(\Omega))$, $\tilde{u}\in C\left([0, T]; H_0^1(\Omega)\right)$, then
	\[\lim_{h\rightarrow 0}\left\{\sup_{0\leq s\leq h}\left\|[G(\tilde{w})](s)-[G(\tilde{w})](0)\right\|_{H^2(\Omega)}+\beta_p\sup_{0\leq s\leq h}\left\|\tilde{u}(s)-\tilde{u}(0)\right\|_{H^1(\Omega)}\right\}=0,\]
	hence
	\begin{align}
		&\lim_{h\rightarrow 0}\left\|E^{(2)}(h,t)\right\|_{L^2(\Omega)\times H^2(\Omega)}\notag\\
		=&\lim_{h\rightarrow 0}\left\|\frac{1}{h}\int_0^h\left\{T(t+h-s)\begin{pmatrix}[G(\tilde{w})](s)+\beta_p\tilde{u}(s)\\ 0\end{pmatrix}\right\}ds-T(t)\begin{pmatrix}G_0\\ 0\end{pmatrix}\right\|_{L^2(\Omega)\times H^2(\Omega)}\notag\\
		=&\lim_{h\rightarrow 0}\left\|T(t)\frac{1}{h}\int_0^h\left\{T(h-s)\begin{pmatrix}[G(\tilde{w})](s)-G(\tilde{w}_0)+\beta_p(\tilde{u}(s)-\tilde{u}_0) \\ 0\end{pmatrix}\right\}ds\right\|_{L^2(\Omega)\times H^2(\Omega)}\notag\\
		\leq&\lim_{h\rightarrow 0}M_0\left\|\frac{1}{h}\int_0^h\left\{T(h-s)\begin{pmatrix}[G(\tilde{w})](s)-G(\tilde{w}_0)+\beta_p(\tilde{u}(s)-\tilde{u}_0)\\ 0\end{pmatrix}\right\}ds\right\|_{L^2(\Omega)\times H^2(\Omega)}\notag\\
		\leq&\lim_{h\rightarrow 0}M_0^2\sup_{0\leq s\leq h}\left\|[G(\tilde{w})](s)-G(\tilde{w}_0)+\beta_p(\tilde{u}(s)-\tilde{u}_0)\right\|_{L^2(\Omega)}\notag\\
		\leq&\lim_{h\rightarrow 0}M_0^2\left\{\sup_{0\leq s\leq h}\left\|[G(\tilde{w})](s)-[G(\tilde{w})](0)\right\|_{H^2(\Omega)}+\beta_p\sup_{0\leq s\leq h}\left\|\tilde{u}(s)-\tilde{u}(0)\right\|_{H^1(\Omega)}\right\}\notag\\
		:=&\lim_{h\rightarrow 0}\Lambda_2(h)=0.\notag
	\end{align}
	Define
	\[G_D(h,t):=\left[G(\tilde{w})\right](t+h)-\left[G(\tilde{w})\right](t)-\left[G'(\tilde{w})\right](t)\cdot\left[\tilde{w}(t+h)-\tilde{w}(t)\right],\]
	\[E^{(3)}_1(h,t):=\int_0^tT(t-s)\begin{pmatrix}\frac{1}{h}G_D(s)\\ 0\end{pmatrix}ds,\]
	\[E^{(3)}_2(h,t):=\int_0^tT(t-s)\begin{pmatrix}[G'(\tilde{w})](s)\left\{\frac{1}{h}\{\tilde{w}(s+h)-\tilde{w}(s)\}-\tilde{q}(s)\right\}\\0\end{pmatrix}ds,\]
	\[E^{(3)}_3(h,t):=\int_0^tT(t-s)\begin{pmatrix}\beta_p\left\{\frac{1}{h}[\tilde{u}(s+h)-\tilde{u}(s)]-[\mathcal{H}_2(\tilde{p}, \tilde{q})](s)\right\}\\ 0\end{pmatrix}ds.\]
	We then write
	\[E^{(3)}(h,t)=E^{(3)}_1(h,t)+E^{(3)}_2(h,t)+E^{(3)}_3(h,t).\]
	Denote $\tilde{v}_h(t)=\tilde{v}(t+h)-\tilde{v}(t)$, $\tilde{w}_h(t)=\tilde{w}(t+h)-\tilde{w}(t)$. Inequalities \eqref{Chap5Fre-Lip-continuity-v}, \eqref{Chap5Fre-Lip-continuity-w} from Lemma \ref{Chap5Fre-Lip-continuity} and \eqref{Chap5Bound-of-Fre-D} imply
	\begin{align}
		&\frac{1}{h}\big\{\left\|\left[D_v\mathbf{S}_o(\tilde{v}(t), \tilde{w}(t)+\theta_2)\right]\left(\tilde{v}_h(t)-\tilde{p}(t)\right)\right\|_{H^1(\Omega)}\notag\\
		&+\left\|\left[D_w\mathbf{S}_o(\tilde{v}(t), \tilde{w}(t)+\theta_2)\right]\left(\tilde{w}_h(t)-\tilde{q}(t)\right)\right\|_{H^1(\Omega)}\big\}
		\leq L_D\left\|E(h,t)\right\|_{L^2(\Omega)\times H^2(\Omega)}\label{Chap5U4}.
	\end{align}
	Here $L_D=C_o+C_o^*\left(\left\|\tilde{v}_0\right\|_{L^2(\Omega)}+\frac{\kappa}{2C}\right)$. Estimates  \eqref{Chap5Fre-uniformly-continuity-v} and \eqref{Chap5Fre-uniformly-continuity-w} from Lemma \ref{Chap5Fre-uniformly-continuity} and estimate \eqref{Chap5Lip-mild-solu-inquality} from Corollary \ref{Chap5Lip-mild-solu} give
	\begin{align}
		&\frac{1}{h}\big\{\left\|\left[D_v\mathbf{S}_o\left(\tilde{v}(t)+\tau\tilde{v}_h(t), \tilde{w}(t)+\tau \tilde{w}_h(t)+\theta_2\right)-D_v\mathbf{S}_o(\tilde{v}(t), \tilde{w}(t)+\theta_2)\right]\tilde{v}_h(t)\right\|_{H^1(\Omega)}\notag\\
		&+\left\|\left[D_w\mathbf{S}_o\left(\tilde{v}(t)+\tau \tilde{v}_h(t), \tilde{w}(t)+\tau \tilde{w}_h(t)+\theta_2\right)-D_w\mathbf{S}_o(\tilde{v}(t), \tilde{w}(t)+\theta_2)\right]\tilde{w}_h(t)\right\|_{H^1(\Omega)}\big\}\notag\\
		\leq&\left[\alpha_1(h)+\alpha_2(h)\right]L_V\label{Chap5U5}.
	\end{align}
	Thus \eqref{Chap5U4} and \eqref{Chap5U5}  imply
	\begin{align}
		&\left\|\frac{\tilde{u}(t+h)-\tilde{u}(t)}{h}-[\mathcal{H}_2(\tilde{p}, \tilde{q})](t)\right\|_{H^1(\Omega)}\notag\\
		=&\bigg\|\frac{1}{h}\int_0^1\left[D_v\mathbf{S}_o\left(\tilde{v}(t)+\tau \tilde{v}_h(t), \tilde{w}(t)+\tau w_h(t)+\theta_2\right)\right]\tilde{v}_h(t)\notag\\
		&+\left[D_w\mathbf{S}_o\left(\tilde{v}(t)+\tau \tilde{v}_h(t), \tilde{w}(t)+\tau \tilde{w}_h(t)+\theta_2\right)\right]\tilde{w}_h(t)d\tau-[\mathcal{H}_2(\tilde{p}, \tilde{q})](t)\bigg\|_{L^2(\Omega)}\notag\\
		\leq&\frac{1}{h}\big\{\left\|\left[D_v\mathbf{S}_o\left(\tilde{v}(t)+\tau \tilde{v}_h(t), \tilde{w}(t)+\tau \tilde{w}_h(t)+\theta_2\right)-D_v\mathbf{S}_o(\tilde{v}(t), \tilde{w}(t)+\theta_2)\right]\tilde{v}_h(t)\right\|_{H^1(\Omega)}\notag\\
		&+\left\|\left[D_w\mathbf{S}_o\left(\tilde{v}(t)+\tau \tilde{v}_h(t), \tilde{w}(t)+\tau \tilde{w}_h(t)+\theta_2\right)-D_w\mathbf{S}_o(\tilde{v}(t), \tilde{w}(t)+\theta_2)\right]\tilde{w}_h(t)\right\|_{H^1(\Omega)}\notag\\
		&+\left\|\left[D_v\mathbf{S}_o(\tilde{v}(t), \tilde{w}(t)+\theta_2)\right]\left(\tilde{v}_h(t)-\tilde{p}(t)\right)\right\|_{H^1(\Omega)}\notag\\
		&+\left\|\left[D_w\mathbf{S}_o(\tilde{v}(t), \tilde{w}(t)+\theta_2)\right]\left(\tilde{w}_h(t)-\tilde{q}(t)\right)\right\|_{H^1(\Omega)}\big\}\notag\\
		\leq&\left[\alpha_1(h)+\alpha_2(h)\right]L_V+L_D\left\|E(h,t)\right\|_{L^2(\Omega)\times H^2(\Omega)} \label{Chap5U6}.
	\end{align}
	Consequently, \eqref{Chap5U6} and the bounded property of the strongly continuous semigroup $T(t)$ imply
	\begin{align}
		\left\|E^{(3)}_3(h,t)\right\|_{L^2(\Omega)\times H^2(\Omega)}
		\leq& M_0\beta_p\int_0^t\left\|\frac{\tilde{u}(s+h)-\tilde{u}(s)}{h}-[\mathcal{H}_2(\tilde{p}, \tilde{q})](s)\right\|_{H^1(\Omega)}ds\notag\\
		\leq&M_0\beta_p\left(T_0\left[\alpha_1(h)+\alpha_2(h)\right]L_V+L_D\int_0^t\left\|E(h,s)\right\|_{L^2(\Omega)\times H^2(\Omega)}ds\right)\notag\\
		:=&\Lambda_3(h)+M_0\beta_pL_D\int_0^t\left\|E(h,s)\right\|_{L^2(\Omega)\times H^2(\Omega)}ds\notag\\
		\rightarrow&0+M_0\beta_pL_D\int_0^t\left\|E(h,s)\right\|_{L^2(\Omega)\times H^2(\Omega)}ds,\ \text{as}\ h\rightarrow 0\notag.
	\end{align}
	Using  the estimate \eqref{Lip-G-2} of Fr\'{e}chet derivative $G'\left(\tilde{w}\right)$ from Corollary \ref{Lip-G-Lem} again gives
	\begin{align}
		\left\|E^{(3)}_2(h,t)\right\|_{L^2(\Omega)\times H^2(\Omega)}\leq M_0L_G\int_0^t\left\|E(h,s)\right\|_{L^2(\Omega)\times H^2(\Omega)}ds\notag.
	\end{align}
	Since $G_D(h,t)\in H^2(\Omega)$, the estimate \eqref{Chap5Lip-mild-solu-inquality} of Corollary \ref{Chap5Lip-mild-solu} implies the function $\tilde{w}$ is Lipschitz continuous with respect to $t\in[0, T]$.  Employing this fact and the limit \eqref{uniformly-continuous-Fre-G} of Corollary \ref{Lip-G-Lem} gives
	
	\begin{align}
		&\left\|E^{(3)}_1(h,t)\right\|_{L^2(\Omega)\times H^2(\Omega)}\notag\\
		\leq&T_0M_0\sup_{\begin{smallmatrix}0\leq t<t+h\leq T\end{smallmatrix}}\frac{1}{h}\left\|G_D(h,t)\right\|_{L^2(\Omega)}\notag\\
		\leq&T_0M_0\sup_{\begin{smallmatrix}0\leq t<t+h\leq T\end{smallmatrix}}\frac{1}{h}\left\|G_D(h,t)\right\|_{H^2(\Omega)}\notag\\
		=&{\textstyle \frac{T_0M_0}{h}}\sup_{\begin{smallmatrix}t\in[0, T]\\ t+h\in[0, T]\end{smallmatrix}}{\textstyle \left\|\int_0^1\left[G'(\tilde{w}(t)+\tau[\tilde{w}(t+h)-\tilde{w}(t)])-G'(\tilde{w}(t))\right]\left[\tilde{w}(t+h)-\tilde{w}(t)\right]d\tau\right\|_{ H^2(\Omega)}}\notag\\
		\leq& { \textstyle\frac{T_0M_0L_V h}{h}}\sup_{0\leq t< t+ h\leq T}{\textstyle \left\|\int_0^1G'(\tilde{w}(t)+\tau[\tilde{w}(t+h)-\tilde{w}(t)])-G'(\tilde{w}(t))d\tau\right\|_{\mathcal{B}\left(H_o^2(\Omega)\right)}}\notag\\
		=&T_0M_0L_V\sup_{\begin{smallmatrix}0\leq t\leq t+\tau h\leq T\\ 0\leq\tau\leq1\end{smallmatrix}}\left\|G'(\tilde{w}(t)+\tau[\tilde{w}(t+h)-\tilde{w}(t)])-G'(\tilde{w}(t))\right\|_{\mathcal{B}\left(H_o^2(\Omega)\right)}\notag\\
		:=&\Lambda_4(h)\rightarrow 0 ,\ \text{as}\ h\rightarrow 0. \notag
	\end{align}
	Summing up, by setting $\pi_v=M_0\beta_pL_D+M_0L_G$,  we have shown
	\[\left\|E(h,t)\right\|_{L^2(\Omega)\times H^2(\Omega)}\leq\Lambda_1(h)+\Lambda_2(h)+\Lambda_3(h)+\Lambda_4(h)+\pi_v\int_0^t\left\|E(h,s)\right\|_{L^2(\Omega)\times H^2(\Omega)}ds.\]
	Gronwall's inequality thus implies the inequality
	\[\left\|E(h,t)\right\|_{L^2(\Omega)\times H^2(\Omega)}\leq\left(\Lambda_1(h)+\Lambda_2(h)+\Lambda_3(h)+\Lambda_4(h)\right)e^{t\pi_v}\]
	holds for $t\in [0, T]$. Letting $h\rightarrow 0^+$, we then deduce that the $\left(\tilde{v}, \tilde{w}\right)$ is differentiable from the right and the right  derivative of $\left(\tilde{v}, \tilde{w}\right)$ coincides with $\left(\tilde{p}, \tilde{q}\right)$. Because $\left(\tilde{p}, \tilde{q}\right)$ is continuous on $[0, T]$, by using Lemma \ref{time-derivative-continuity}, we conclude $\left(\tilde{v}, \tilde{w}\right)\in C^1\left([0, T]; L^2(\Omega)\times H_o^2(\Omega)\right)$.
	
	Lemma \ref{Chap5Fre-Lip-continuity}, estimates \eqref{Chap5Bound-of-Fre-D} and Lemma \ref{Chap5Fre-uniformly-continuity} imply $\tilde{u}=\mathbf{S}_o(\tilde{v}, \tilde{w}+\theta_2)\in C^1\left([0, T]; L^2(\Omega)\right)$, whereby $\left(G\left(\tilde{w}\right)+\beta_p\tilde{u}, 0\right)\in C^1\left([0, T]; L^2(\Omega)\times H_o^2(\Omega)\right)$. 
	
	By Lemma \ref{IEE-S}, the mild solution $\left(\tilde{v}, \tilde{w}\right)$, defined by \eqref{Chap5mild-solu-form}, uniquely solves the semilinear evolution equation \eqref{Chap54th-SWE-1} on $[0, T]$, $\left(\tilde{v}, \tilde{w}\right)$ is a unique strict solution of semilinear evolution equation \eqref{Chap54th-SWE-1}, and\\
	$\left(\tilde{v}, \tilde{w}\right)\in C\left([0, T]; H_o^2(\Omega)\times H_o^4(\Omega)\right)\cap C^1\left([0, T]; L^2(\Omega)\times H_o^2(\Omega)\right)$. Thus, $\tilde{w}$ uniquely solves the equation \eqref{Chap54th-LWE-1} and $\tilde{v}(x, t)=\frac{\partial\tilde{w}}{\partial t}(x,t)$, $(x,t)\in\Omega\times [0, T]$, for all $T\in(0, T_0)$.
\end{proof}
\appendix

\section{Proofs  in Section \ref{Sec:prerequisite}} \label{AppA}
\subsection{Proof of Corollary \ref{estimates}}
\begin{proof}
	We first prove assertion \eqref{w-lower-bound} of Corollary \ref{estimates}.
	
	Since $ w\in B_r\left({w}_0, T\right)$,  then $\|w(t)-w_0\|_{H^{2}(\Omega)}\leq r$ holds for all $t\in [0, T]$.
	
	According to the triangle inequality and the Sobolev embedding theorem, there exists a constant $C=C(\Omega)$, such that for all $r\in\left(0, \frac{\kappa}{2C}\right)$, it follows that
	\bse\label{w-lower-bound1}
	\be\label{w-lower-bound11}
	w(t)=w_0+w(t)-w_0\geq\kappa-\left\|w(t)-w_0\right\|_{L^{\infty}(\Omega)}\geq\kappa-C\left\|w(t)-w_0\right\|_{H^{2}(\Omega)}\geq\kappa-Cr\geq\frac{\kappa}{2},
	\ee
	\be\label{w-lower-bound12}
	\left\|w(t)\right\|_{H^2(\Omega)}\leq\tilde{C},
	\ee
	\ese
	hold for all $t\in [0, T]$. Here, $\tilde{C}=\frac{\kappa}{2C}+\left\|w_0\right\|_{H^2(\Omega)}$.  This proves assertion \eqref{w-lower-bound}.
	
	According to \eqref{w-lower-bound1} and $r\in\left(0, \frac{\kappa}{2C}\right)$, we have
	\begin{align}
		\sup_{t\in[0, T]}\left[\int_\Omega\left|\frac{1}{w(t)}\right|^2dx\right]\leq \frac{4C}{\kappa^2},\ \sup_{t\in[0, T]}\left[\int_\Omega\left|\nabla\left[\frac{1}{w(t)}\right]\right|^2dx\right]&=\sup_{t\in[0, T]}\left[\int_\Omega\frac{\left|\nabla w(t) \right|^2}{\left|w(t)\right|^4}dx\right]\notag\\
		&\leq\frac{16}{\kappa^4}\sup_{t\in[0, T]}\left[\int_\Omega\left|\nabla w(t)\right|^2dx\right]\notag\\
		&\leq\frac{16}{\kappa^4}\sup_{t\in[0, T]}\left\|w(t)\right\|_{H^2(\Omega)}^2\notag\\
		&\leq\frac{16}{\kappa^4}\tilde{C}^2\notag.
	\end{align}
	\begin{align}
		\sup_{t\in[0, T]}\left\{\int_\Omega \left|\Delta\left[\frac{1}{w(t)}\right]\right|^2dx\right\}
		\leq&\sup_{t\in[0, T]}\left[\left\|\frac{\Delta w(t)}{[w(t)]^2}\right\|_{L^2(\Omega)}
		+\left\|\frac{2\left(\nabla w(t)\right)^2}{[w(t)]^3}\right\|_{L^2(\Omega)}\right]^2\notag\\
		\leq&\sup_{t\in[0, T]}\left[\frac{4}{\kappa^2}\left\|\Delta w(t)\right\|_{L^2(\Omega)}+\frac{16}{\kappa^3}\left\|\left(\nabla w(t)\right)^2\right\|_{L^2(\Omega)}\right]^2\notag\\
		\leq&\sup_{t\in[0, T]}\left[\frac{4}{\kappa^2}\left\|w(t)\right\|_{H^2(\Omega)}+\frac{16}{\kappa^3}\left\|\nabla w(t)\right\|^2_{L^{4}(\Omega)}\right]^2\notag\\
		\leq&\sup_{t\in[0, T]}\left[\frac{4}{\kappa^2}\left\|w(t)\right\|_{H^2(\Omega)}+\frac{16C}{\kappa^3}\left\|\nabla w(t)\right\|^2_{H^{1}(\Omega)}\right]^2\notag\\
		\leq&\sup_{t\in[0, T]}\left[\frac{4}{\kappa^2}+\frac{16C}{\kappa^3}\left\|w(t)\right\|_{H^2(\Omega)}\right]^2\left\|w(t)\right\|^2_{H^2(\Omega)}\notag\\
		\leq&\left[\frac{4}{\kappa^2}+\frac{16C}{\kappa^3}\tilde{C}\right]^2\tilde{C}^2\notag.
	\end{align}
	\begin{align}
		\sup_{t\in[0, T]}\left\|\frac{1}{w(t)}\right\|^2_{H^2(\Omega)}=&\sup_{t\in[0, T]}\left\{\int_\Omega\left|\frac{1}{w(t)}\right|^2+\left|\nabla\left[\frac{1}{w(t)}\right]\right|^2+\left|\Delta\left[\frac{1}{w(t)}\right]\right|^2dx\right\}\notag\\
		\leq&\frac{4C}{\kappa^2}+\frac{16}{\kappa^4}\tilde{C}^2+\left[\frac{4}{\kappa^2}+\frac{16C}{\kappa^3}\tilde{C}\right]^2\tilde{C}^2\label{w-1-1}
	\end{align}
	We set $C_1^2=\frac{4C}{\kappa^2}+\frac{16}{\kappa^4}\tilde{C}^2+\left[\frac{4}{\kappa^2}+\frac{16C}{\kappa^3}\tilde{C}\right]^2\tilde{C}^2$,
	$C_1$ is a positive constant depending on $\Omega$, $\kappa$,  and $\left\|w_0\right\|_{H^2(\Omega)}$. Because $H^2(\Omega)$ is an algebra, i.e. Lemma \ref{alg}, the assertion \eqref{C-a} of Corollary \ref{estimates} holds for $t\in [0, T]$.
	With these facts, we continue on to show the assertions \eqref{C-d} of Corollary \ref{estimates}. For all $w_1$ and $w_2\in B_r(w_0,  T)$, the algebraic property of $H^2(\Omega)$ from Lemma \ref{alg} and the triangle inequality imply
	\[\sup_{t\in[0, T]}\left\| \frac{\left[w_1(t)+w_2(t)\right]}{[w_1(t)]^{2}[w_2(t)]^{2}}\right\|_{H^2(\Omega)}\leq2 C^3_1,\
	\sup_{t\in[0, T]}\left\|\frac{[w_1(t)]^2+[w_2(t)]^2+w_1(t) w_2(t)}{[w_1(t)]^3 [w_2(t)]^3}\right\|_{H^2(\Omega)}\leq3C_1^4.\]
	Setting $C_2=2 C^3_1$ and $C_3=3C^4_1$, hence we deduce \eqref{C-d} of Corollary \ref{estimates}.
	
	This concludes the proof of Corollary \ref{estimates}.
\end{proof}

\subsection{Proof of Corollary \ref{Lip-G-Lem}}
\begin{proof}
	Recall that $r\in\left(0, \frac{\kappa}{2C}\right)$, $w_0=\tilde{w}_0+\theta_2$, $\kappa=\min_{\overline{\Omega}}w_0>0$, $w(t)=\tilde{w}(t)+\theta_2$, $\tilde{w}\in B_r(\tilde{w}_0, T)$. For small $h\in(0, T)$ such that $t+h\in(0, T]$, $\left\|\tilde{w}(t+h)-\tilde{w}_0\right\|_{H^2(\Omega)}\leq r$, \eqref{C-a} and \eqref{C-d} of Corollary \ref{estimates} imply  \eqref{Holdercontinuous} and \eqref{Lip-G} of Corollary \ref{Lip-G-Lem} are valid with $L_G=\beta_FC_2$.
	
	In particular, for $\tilde{w}_1\in B_r\left(\tilde{w}_0,  T\right)$, set $\tilde{w}_2(t)=\tilde{w}_0$, $\forall\ t\in[0, T]$, then $[G(\tilde{w}_2)](t)=G(\tilde{w}_0)$. Hence \eqref{Lip-G-1} of Corollary \ref{Lip-G-Lem} is valid from the assertion \eqref{Lip-G} of Corollary \ref{Lip-G-Lem}.
	
	Set $\tilde{w}_2=\tilde{w}\in B_r\left(\tilde{w}_0, T\right)$, for $q\in C\left([0, T]; H_o^2(\Omega)\right)$, choose small $\lambda\in\mathbb{R}$, such that
	\[\tilde{w}_1=\tilde{w}+\lambda q\in B_r\left(\tilde{w}_0, T\right).\]
	Then the Fr\'{e}chet derivative $G'\left(\tilde{w}\right)q$ of $G$ with respect to $\tilde{w}\in B_r\left(\tilde{w}_0, T\right) $ exists as a linear operator  $G'(\tilde{w}):\ C\left([0, T]; H_o^2(\Omega)\right)\longrightarrow C\left([0, T]; H_o^2(\Omega)\right)$ given by
	\[G'\left(\tilde{w}\right)q=\lim_{\lambda\rightarrow0}\frac{1}{\lambda}\left[G\left(\tilde{w}+\lambda q\right)-G\left(\tilde{w}\right)\right]=\frac{2\beta_F}{\left(\tilde{w}+\theta_2\right)^3}q,\] \[\left[G'\left(\tilde{w}\right)q\right](t)=\left[G'\left(\tilde{w}(t)\right)\right]q(t)=\frac{2\beta_F}{\left(\tilde{w}(t)+\theta_2\right)^3}q(t).\]
	According to the assertion \eqref{Lip-G}, the inequality \eqref{Lip-G-2} holds by the following computation:
	\begin{align}
		\sup_{t\in[0, T]}\left\|\left[G'\left(\tilde{w}\right)q\right](t)\right\|_{H^2(\Omega)}=&\sup_{t\in[0, T]}\left\|\lim_{\lambda\rightarrow0}\frac{[G\left(\tilde{w}+\lambda q\right)](t)-[G\left(\tilde{w}\right)](t)}{\lambda}\right\|_{H^2(\Omega)}\notag\\
		=&\lim_{\lambda\rightarrow0}\frac{1}{\lambda}\sup_{t\in[0, T]}\left\|[G\left(\tilde{w}_1\right)](t)-[G\left(\tilde{w}_2\right)](t)\right\|_{H^2(\Omega)}\notag\\
		\leq&\lim_{\lambda\rightarrow0}\frac{1}{\lambda}L_G\sup_{t\in[0, T]}\left\| \tilde{w}_1(t)-\tilde{w}_2(t)\right\|_{H^{2}(\Omega)}\notag\\
		=&L_G\sup_{t\in[0, T]}\left\|q(t)\right\|_{H^{2}(\Omega)}\notag.
	\end{align}
	For all $t\in[0, T]$, choose small $h\in\left(0, T\right)$ and $\tau\in[0, 1]$ such that $t+h\in(0, T]$,
	\[\left\|\tilde{w}(t)+\tau[\tilde{w}(t+h)-\tilde{w}(t)]-\tilde{w}_0\right\|_{H^2(\Omega)}\leq r,\]
	then for $\psi\in H_o^2(\Omega)$ with $\|\psi\|_{H^2(\Omega)}\leq1$, $G'(\tilde{w}(t)): \psi\in H_o^2(\Omega)\longrightarrow \left[G'(\tilde{w}(t))\right]\psi\in H_o^2(\Omega)$. By the algebraic properties of $H^2(\Omega)$, i.e. \eqref{C-a} and \eqref{C-d} from Corollary \ref{estimates}, we have
	\begin{align}
		&\left\|G'(\tilde{w}(t)+\tau[\tilde{w}(t+h)-\tilde{w}(t)])-G'(\tilde{w}(t))\right\|_{\mathcal{B}\left(H_o^2(\Omega)\right)}\notag\\
		=&\left\|\left[G'(\tilde{w}(t)+\tau[\tilde{w}(t+h)-\tilde{w}(t)])\right]\psi-\left[G'(\tilde{w}(t))\right]\psi\right\|_{H^2(\Omega)}\notag\\
		\leq&2\beta_F\sup_{\begin{smallmatrix}0\leq t< t+h\leq T\\ 0\leq \tau\leq 1\end{smallmatrix}}\left\|\frac{1}{\left({w}(t)+\tau[{w}(t+h)-{w}(t)]\right)^3}-\frac{1}{[{w}(t)]^3}\right\|_{H^2(\Omega)}\|\psi\|_{H^2(\Omega)}\notag\\
		\leq&2\beta_FC_3\sup_{0\leq t<t+h\leq T}\|\tilde{w}(t+h)-\tilde{w}(t)\|_{H^2(\Omega)}\notag.
	\end{align}
	Since  $\tilde{w}\in C\left([0, T]; H_o^2(\Omega)\right)$,  $\tilde{w}$ are uniformly continuous with respect to $t\in[0, T]$, hence the assertion \eqref{uniformly-continuous-Fre-G} is proved by
	\[\lim_{h\rightarrow 0^+}\sup_{0\leq t< t+ h\leq T}\|\tilde{w}(t+\tau h)-\tilde{w}(t)\|_{H^2(\Omega)}=0.\]
	This concludes the proof of Corollary \ref{Lip-G-Lem}.
	
\end{proof}
\section{Proofs in Section \ref{Chap54th-order problem}}\label{AppB}
\subsection{Proof of Lemma \ref{generator}}
\begin{proof}
	We aim to show that $\mathcal{A}$, defined by \eqref{A-1}, is skew adjoint on the Hilbert space $\mathfrak{X}$ defined by \eqref{state-space}, and thus generates a strongly continuous semigroup ($C_0$-semigroup) on $\mathfrak{X}$ by using Stone's Lemma (see 3.24 Theorem, Section 3, Chapter II, \cite{EK}).
	
	From the definition \eqref{A-1} of $\mathcal{A}$, $\mathcal{A}$ is densely defined in $\mathfrak{X}$, i.e. $\overline{D(\mathcal{A})}=\mathfrak{X}$,  then $\mathcal{A}$ is skew symmetric (i.e. $\mathbf{i}\mathcal{A}$ is symmetric) for any two $(\phi_1, \phi_2)\in D(\mathcal{A})$ and $(\psi_1, \psi_2 )\in D(\mathcal{A})$ by the following computations:
	\begin{align}
		\left\langle\mathcal{A}\begin{pmatrix}\phi_1\\ \phi_2\end{pmatrix}, \begin{pmatrix}\psi_1\\ \psi_2 \end{pmatrix}\right\rangle_{\mathfrak{X}}&=\left\langle\begin{pmatrix}\mathbb{A}\phi_2\\ \phi_1 \end{pmatrix}, \begin{pmatrix}\psi_1\\ \psi_2 \end{pmatrix}\right\rangle_{\mathfrak{X}}\notag\\
		&=\int_\Omega\mathbb{A}\phi_2\cdot\overline{\psi_1}+\nabla\phi_1\cdot\nabla\overline{\psi_2}+\Delta\phi_1\cdot\Delta\overline{\psi_2}dx\notag\\
		&=\int_\Omega -\nabla\phi_2\cdot\nabla\overline{\psi_1}-\Delta\phi_2\cdot\Delta\overline{\psi_1}+\nabla\phi_1\cdot\nabla\overline{\psi_2}+\Delta\phi_1\cdot\Delta\overline{\psi_2}dx\notag\\
		&=-\left[\int_\Omega \nabla\phi_2\cdot\nabla\overline{\psi_1}+\Delta\phi_2\cdot\Delta\overline{\psi_1}-\nabla\phi_1\cdot\nabla\overline{\psi_2}-\Delta\phi_1\cdot\Delta\overline{\psi_2}dx\right]\notag\\
		&=-\left[\int_\Omega \nabla\phi_2\cdot\nabla\overline{\psi_1}+\Delta\phi_2\cdot\Delta\overline{\psi_1}+\phi_1\cdot\overline{\mathbb{A}\psi_2}dx\right]\notag\\
		&=-\left\langle\begin{pmatrix}\phi_1\\ \phi_2 \end{pmatrix}, \begin{pmatrix}\mathbb{A}\psi_2\\ \psi_1 \end{pmatrix}\right\rangle_{\mathfrak{X}}\notag\\
		&=-\left\langle\begin{pmatrix}\phi_1\\ \phi_2 \end{pmatrix}, \mathcal{A}\begin{pmatrix}\psi_1\\ \psi_2 \end{pmatrix}\right\rangle_{\mathfrak{X}}\notag
	\end{align}
	Furthermore, $\text{Re}\left\langle\mathcal{A}\begin{pmatrix}\psi\\ \phi\end{pmatrix}, \begin{pmatrix}\psi\\ \phi \end{pmatrix}\right\rangle_{\mathfrak{X}}=0$ for all $(\psi, \phi )\in D(\mathcal{A})$, so $\mathcal{A}$ is dissipative. By using the Lax-Milgram Theorem (Theorem 1, Section 6.2, \cite{EL}), we have the inverse $\mathbb{A}^{-1}$ of $\mathbb{A}$ exists, thus we define an operator
	\[\mathcal{R}=\begin{pmatrix}0 &1\\ \mathbb{A}^{-1} &0\end{pmatrix}.\]
	Then
	\[\mathcal{R}\mathfrak{X}\subset D(\mathcal{A}),\ \mathcal{A}\mathcal{R}=I, \ \mathcal{R}\mathcal{A}\begin{pmatrix}\psi\\ \phi \end{pmatrix}=\begin{pmatrix}\psi\\ \phi \end{pmatrix},\ \forall\ \left(\psi, \phi \right)\in D(\mathcal{A}).\]
	Therefore, $\mathbf{i}\mathcal{A}$ is invertible and the resolvent set $\rho(\mathbf{i}\mathcal{A})$ of $\mathbf{i}\mathcal{A}$ satisfies $\rho(\mathbf{i}\mathcal{A})\cap \mathbb{R}\neq\emptyset$, so the spectrum $\sigma(\mathbf{i}\mathcal{A})\subseteq\mathbb{R}$, consequently, $\mathbf{i}\mathcal{A}$ is selfadjoint, as a result, $\mathcal{A}$ is skew adjoint. According to Stone's Lemma, we have the linear operator $\mathcal{A}$ generates a $C_0$-semigroup
	\[\left\{T(t)\in \mathcal{B}(\mathfrak{X}):\ t\in[0, \infty)\right\}.\]
\end{proof}

\bibliographystyle{abbrv}
\bibliography{Bibliography4}

\end{document}